\newtheorem{theorem}{Theorem}[section]
\newtheorem{conjecture}[theorem]{Conjecture}
\newtheorem{corollary}[theorem]{Corollary}
\newtheorem{lemma}[theorem]{Lemma}
\newtheorem{question}[theorem]{Question}
\newtheorem{observation}[theorem]{Observation}
\newcommand\DELETE[1]{}
\begin{document}
	
	%\begin{frontmatter}
	
	\title{{\bf On $(n,m)$-chromatic numbers of graphs having bounded sparsity parameters}}
	\author{
		{\sc Sandip Das}$\,^{a}$, {\sc Abhiruk Lahiri}$\,^{b}$,  {\sc Soumen Nandi}$\,^{c}$, \\
		{\sc Sagnik Sen}$\,^{d}$, {\sc S Taruni}$\,^{d}$ \\
		\mbox{}\\
		{\small $(a)$ Indian Statistical Institute Kolkata, India }\\
		{\small $(b)$ Charles University, Czech Republic}\\
		{\small $(c)$ Institute of Engineering \& Management, Kolkata, India}\\
		{\small $(d)$ Indian Institute of Technology Dharwad, India}\\
	}

	\date{}
	
	\maketitle

\begin{abstract}
An $(n,m)$-graph is characterised by having $n$ types of arcs and $m$ types of edges. 
A homomorphism of an $(n,m)$-graph $G$ to an $(n,m)$-graph $H$, is a vertex mapping that preserves adjacency, direction, and type. 
The $(n,m)$-chromatic number of $G$, denoted by $\chi_{n,m}(G)$, is the minimum value of $|V(H)|$ such that there exists a homomorphism of $G$ to $H$. The theory of homomorphisms of $(n,m)$-graphs have connections with graph theoretic concepts like harmonious coloring, nowhere-zero flows; with other mathematical topics like binary predicate logic, Coxeter groups; and has application to the Query Evaluation Problem (QEP) in graph database.

In this article, we show that the arboricity of $G$ is bounded by a function of $\chi_{n,m}(G)$ but not the other way around. 
Additionally, we show that the acyclic chromatic number of $G$ is bounded by a function of $\chi_{n,m}(G)$, a result already known in the reverse direction. 
Furthermore, we prove that the $(n,m)$-chromatic number for the family of graphs with a maximum average degree less than $2+ \frac{2}{4(2n+m)-1}$, including the subfamily of planar graphs with girth at least $8(2n+m)$, equals $2(2n+m)+1$. 
This improves upon previous findings, which proved the $(n,m)$-chromatic number for planar graphs with girth at least $10(2n+m)-4$ is $2(2n+m)+1$.

It is established that the $(n,m)$-chromatic number for the family $\mathcal{T}_2$ of partial $2$-trees is both bounded below and above by quadratic functions of $(2n+m)$, with the lower bound being tight when $(2n+m)=2$. 
We prove $14 \leq \chi_{(0,3)}(\mathcal{T}_2) \leq 15$ and $14 \leq \chi_{(1,1)}(\mathcal{T}_2) \leq 21$ which improves both known lower bounds and the former upper bound. Moreover, for the latter upper bound, to the best of our knowledge we provide the first theoretical proof. 
\end{abstract}

\noindent \textbf{Keywords:} colored mixed graphs, graph homomorphisms, chromatic number, sparse graphs, planar graphs, partial 2-trees.

\section{Introduction}
\label{sec intro}
\textbf{Graph homomorphism}~\cite{hell2004graphs} is a fundamental concept in graph theory that captures the structural relationships between two or more graphs. In essence, a graph homomorphism is a vertex mapping from a graph to another graph that preserves the adjacency relationships. 
% Formally, if there exists a homomorphism from graph $G$ to graph $H$, it implies that the vertices of $G$ can be assigned to vertices of $H$ in a way that adjacent vertices in $G$ are mapped to adjacent vertices in $H$.
This concept finds applications in various fields, including complexity theory, artificial intelligence, telecommunication and also in statistical physics and play a crucial role in understanding and solving problems related to graph coloring, graph isomorphism, and other combinatorial optimisation challenges~\cite{hell2004graphs}. The study of graph homomorphisms has led to the development of powerful tools and techniques for analyzing the inherent structure and properties of graphs, making it a valuable area of research within the broader realm of graph theory.

In 2000, Ne\v{s}et\v{r}il and Raspaud~\cite{nevsetvril2000colored} introduced the notion of \textbf{colored homomorphisms of colored mixed graphs}, or simply, 
\textbf{homomorphisms of $(n,m)$-graphs}, as a generalization of homomorphisms of (undirected) graphs, oriented graphs, $2$-edge-colored graphs, and $m$-edge-colored graphs~\cite{alon1998homomorphisms, hell2004graphs, sopena2016homomorphisms}. 
Furthermore, they defined the parameter $(n,m)$-chromatic number, using homomorphism, as a generalization of the ordinary chromatic number. It is observed that the topic has connections with graph theoretic topics like harmonious coloring~\cite{alon1998homomorphisms} and nowhere-zero flows~\cite{BORODIN2004147}, and other mathematical topics like binary predicate logic~\cite{nevsetvril2000colored} and Coxeter groups~\cite{alon1998homomorphisms}. 
	 
Moreover, homomorphisms of $(n,m)$-graphs (and their variants) are natural models for the Query Evaluation Problem (QEP) in graph database~\cite{angles2017foundations, beaudou2019complexity}. It is worth mentioning that graph databases are popularly used in social networks (e.g., Facebook, Twitter), information networks (e.g., World Wide Web, citation of academic papers), technological networks (e.g., internet, Geographic Information Systems or \textit{GIS},
phone networks), and biological networks (e.g., genomics, food web, neural networks)~\cite{angles2008survey}.   

In the existing 
literature, one can notice that the homomorphisms and 
$(n,m)$-chromatic number of sparse graphs have received a 
particular focus. The studies did not always mention the term 
``sparse'' explicitly, rather many times it dealt with the 
relation between $(n,m)$-chromatic number of sparse families 
of graphs (planar, partial $2$-trees, outerplanar), and some 
sparsity  parameters (maximum average degree, treewidth, 
acyclic chromatic number). In this article, we too focus on some sparsity parameters and 
contribute in filling some gaps in the theory of homomorphisms of sparse $(n,m)$-graphs.

\medskip
   
\noindent \textit{Note:} For this article, we will restrict ourselves to studying $(n,m)$-graphs whose underlying graphs are simple, and where $(n,m) \neq (0,1)$, unless otherwise stated. For standard graph theoretic notion, we will follow West~\cite{west2001introduction}. Also, if a notion (connectedness, degree, etc.) that is meaningful for undirected graphs is used for an $(n,m)$-graph, one can assume it to be applicable for the underlying graph instead.

\paragraph{The $(n,m)$-graphs:}
An \textit{$(n,m)$-graph} is a graph $G$ having $n$ different types of arcs and $m$ different types of edges. Given an $(n,m)$-graph $G$, we will denote its set of arcs and edges with 
	$A(G)$ and $E(G)$, respectively, while $und(G)$ will denote its underlying graph. Moreover, each arc of $G$ is labeled using one of 
	the (even) numbers from $\{2,4, \ldots, 2n\}$ and each edge of $G$ is labeled by one of the numbers from 
	$\{2n+1, 2n+2, \ldots, 2n+m\}$. The labels are called the \textit{type} of the arc (resp., edge).
	
If $uv$ is an arc of the type $\alpha$, for some $\alpha \in \{2,4, \ldots, 2n\}$, then $v$ is an \textit{$\alpha$-neighbor} of $u$. 
Equivalently, for practical convenience, 
 we 	view $vu$ as a 
	\textit{reverse arc} labeled $(\alpha-1)$, that is, a reverse arc of the 
	type $(\alpha-1)$, and say that 
	$u$ is an \textit{$(\alpha-1)$-neighbor} of $v$. 
	On the other hand, if  $uv$ is an edge of the type $\alpha$, for some 
	$\alpha \in \{2n+1, 2n+2, \ldots, 2n+m\}$, then $v$ is an \textit{$\alpha$-neighbor} of $u$, and $u$ is a 
\textit{$\alpha$-neighbor} of $v$. 
Thus, notice that, a vertex $u$ 
can have $2n+m$ different types of adjacencies or neighbors, namely, the types $1, 2, 3, \ldots, 2n+m$. 
Furthermore, given any $\alpha \in \{1, 2, \ldots, 2n+m\}$, $N^{\alpha}(u)$ denotes the set of all 
$\alpha$-neighbors of $u$.

\paragraph{Homomorphisms and $(n,m)$-chromatic number:}	A \textit{homomorphism} of an $(n,m)$-graph $G$ to another $(n,m)$-graph $H$ is a function  
	$$f : V(G) \rightarrow V(H)$$ 
	such that if $uv$ is an arc (resp., reverse arc, edge) of $G$, then $f(u)f(v)$ is also an arc (resp., reverse arc,  edge) of $H$ having the same type as $uv$. 
	The notation $G \rightarrow H$ is used to denote that $G$ admits a homomorphism to $H$.

	Using the notion of homomorphism, one can define the chromatic number of (n,m)-graphs that  generalizes the
	chromatic numbers defined for simple graphs, oriented graphs, $m$-edge-colored graphs, etc~\cite{hell2004graphs}. The \textit{$(n,m)$-chromatic number} 
	of an $(n,m)$-graph $G$ is given by
		$$\chi_{n,m}(G) = \min\{|V(H)| : G \rightarrow H\}.$$
	For a simple graph $S$, the $(n,m)$-chromatic number of the graph $S$ is given by
	$$\chi_{n,m}(S) = \max\{\chi_{n,m}(G) : und(G) = S\}.$$
	For a family $\mathcal{F}$ of graphs, the $(n,m)$-chromatic number of $\mathcal{F}$ is given by
	$$\chi_{n,m}(\mathcal{F}) = \max\{\chi_{n,m}(G) : G \in \mathcal{F}\}.$$
	Notice that, the family $\mathcal{F}$ may contain simple or $(n,m)$-graphs. 

 \medskip

As we restrict ourselves to $(n,m)$-graphs having underlying simple graphs, the following notion and its property becomes significant for our study.  
  A \textit{special $2$-path} is $2$-path $uvw$ of $(n,m)$-graph $G$ where 
$v \in N^{\alpha}(u) \cap N^{\beta}(w)$ such that $\alpha \neq \beta$.

\begin{observation}[\cite{bensmail2017analogues}]\label{obs special 2-path}
Two vertices $u$ and $v$ 
cannot have the same image under any homomorphism of $G$ to any $H$, if and only if they are adjacent or connected by a special $2$-path in $G$. 
\end{observation}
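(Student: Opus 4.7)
The plan is to prove both directions of the biconditional separately, using the restriction that all $(n,m)$-graphs under consideration have simple underlying graphs (which means $H$ is also simple).

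For the easier direction, I would argue that if $u$ and $v$ are adjacent or joined by a special $2$-path, then every homomorphism must send them to distinct vertices. In the adjacency case, a homomorphism $f \colon G \to H$ forces $f(u)f(v)$ to be an arc or edge of $H$ of a specified type; since $H$ has a simple underlying graph, it has no loops and so $f(u) \neq f(v)$. In the special $2$-path case, suppose $uxv$ is the path with $x \in N^{\alpha}(u) \cap N^{\beta}(v)$ and $\alpha \neq \beta$. If $f(u) = f(v)$, then $f(x)$ would be simultaneously an $\alpha$-neighbor and a $\beta$-neighbor of the vertex $f(u) = f(v)$, producing two distinct adjacencies between the same pair of vertices of $H$. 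This contradicts the fact that the underlying graph of $H$ is simple.

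For the converse, I would argue the contrapositive by an explicit construction: assume $u$ and $v$ are non-adjacent and not connected by any special $2$-path, and produce an $(n,m)$-graph $H$ together with a homomorphism $G \to H$ that identifies $u$ and $v$. The natural choice is the quotient $H = G / \{u \sim v\}$, obtained by merging $u$ and $v$ into a single vertex $uv^*$ and keeping all adjacencies with their original types; the associated quotient map $f$ sends $u, v \mapsto uv^*$ and is the identity elsewhere. To verify that $H$ is a valid $(n,m)$-graph we must check that its underlying graph is simple. No loop is created at $uv^*$ because $u$ and $v$ are non-adjacent in $G$. No multiple adjacency between $uv^*$ and some other vertex $x$ is created because the only way this could occur is if $x$ were adjacent to both $u$ and $v$ with different types $\alpha \neq \beta$, but then $uxv$ would be a special $2$-path, contradicting our assumption; if the types coincide, the two parallel adjacencies collapse into a single adjacency of that type.

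Once $H$ is seen to be a well-defined $(n,m)$-graph with a simple underlying graph, verifying that $f$ is a homomorphism is routine: every arc, reverse arc, and edge of $G$ is by construction carried to an arc, reverse arc, or edge of $H$ of the same type. The main conceptual step is recognising that the absence of a special $2$-path is precisely the combinatorial condition needed to prevent the merger from producing a multi-adjacency, so no technical obstacle is expected beyond carefully enumerating the cases (arc, reverse arc, edge) when checking simplicity of $H$.
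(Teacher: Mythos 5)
Your proof is correct: the paper states this observation as an imported result (citing Bensmail et al.) without giving its own proof, and your two-direction argument --- using simplicity of $und(H)$ to rule out loops and multi-adjacencies in the forward direction, and the quotient graph $G/\{u \sim v\}$ with the absence of a special $2$-path guaranteeing that merged common neighbours carry a single consistent adjacency type in the converse --- is exactly the standard argument for this fact. No gaps.
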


\paragraph{Context and motivation:} 
The \textit{arboricity} of a graph $G$, denoted by \textit{arb(G)}, is the minimum $r$ such that the edges of $G$ can be decomposed into $r$ forests.
To the best of our knowledge, the earlier studies 
did not explore the relation between $(n,m)$-chromatic number of graphs and graphs having bounded arboricity, a popular 
sparsity parameter, except for the special case when 
$(n,m)=(1,0)$.

\begin{question}\label{ques arboricity}
    What is the relation between the arboricity of a graph $G$ and its $(n,m)$-chromatic number? 
\end{question}

An  \textit{acyclic $s$-coloring} of a graph $G$ is a proper coloring whose bichromatic subgraphs are forests. The \textit{acyclic chromatic number} $\chi_{a}(G)$ of $G$ is the minimum $s$ such that $G$ admits an acyclic $s$-coloring. 
The parameter acyclic chromatic number is closely related to arboricity, and can be viewed as yet another sparsity parameter. 
A tight upper bound of $s(2n+m)^{s-1}$ is proved~\cite{fabila2008lower, nevsetvril2000colored} for the $(n,m)$-chromatic number of the family of graphs having acyclic chromatic number at most $s$.

\begin{theorem}[\cite{fabila2008lower, nevsetvril2000colored}]\label{th nm-chi vs acyclic}
    Let $\mathcal{A}_s$ denote the family of graphs having acyclic chromatic number at most $s$. Then we have $\chi_{n,m}(\mathcal{A}_s) = s(2n+m)^{s-1}$. 
\end{theorem}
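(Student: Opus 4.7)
The plan is to prove the equality by establishing the upper and lower bounds separately; both are classical, with the upper bound due to Ne\v{s}et\v{r}il and Raspaud and the lower bound due to Fabila-Monroy et al.

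For the upper bound $\chi_{n,m}(\mathcal{A}_s) \leq s(2n+m)^{s-1}$, I would construct an explicit target $(n,m)$-graph $T$ with vertex set $V(T) = \{(i, \lambda) : i \in [s], \, \lambda : [s] \setminus \{i\} \to \{1, 2, \ldots, 2n+m\}\}$, which has cardinality exactly $s(2n+m)^{s-1}$. Given $G \in \mathcal{A}_s$ with a fixed acyclic $s$-coloring $c$, the bichromatic subgraph $F_{ij}$ on $V_i \cup V_j$ is a forest for each pair $i \neq j$. I would root each tree of each $F_{ij}$ arbitrarily and, for $v \in V_i$, set $\lambda_v(j) := \alpha$ when $v$'s parent in $F_{ij}$ is an $\alpha$-neighbor of $v$, with a fixed default value when $v$ is a root or has no $j$-colored neighbor. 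Setting $\phi(v) := (c(v), \lambda_v)$ gives a candidate homomorphism, and the adjacencies of $T$ are declared to be exactly the images of adjacencies of $G$ under $\phi$.

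For the lower bound $\chi_{n,m}(\mathcal{A}_s) \geq s(2n+m)^{s-1}$, I would exhibit a witness graph $G^\star \in \mathcal{A}_s$ attaining the bound. The construction proceeds recursively in $s$, producing $G^\star$ arranged in $s$ layers (one per color class) so that, for every pair of distinct potential signatures $(i, \lambda) \neq (i', \lambda')$, the graph contains two vertices realising these signatures which are either adjacent or connected by a special $2$-path. By Observation~\ref{obs special 2-path}, such pairs are forced to distinct images in any homomorphism of $G^\star$, so any target requires at least $s(2n+m)^{s-1}$ vertices. The acyclic chromatic number is kept at most $s$ by colouring each layer with its distinguished colour and checking that every bichromatic subgraph induced by the construction remains a forest.

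The main obstacle lies on the upper bound side: one must verify that $T$, as defined above, is a legitimate $(n,m)$-graph on a simple underlying graph, equivalently that $\phi$ never forces two distinct adjacency types between the same pair of image vertices. Concretely, the consistency claim to prove is that whenever $v_1u_1$ and $v_2u_2$ are adjacencies of $G$ with types $\alpha_1, \alpha_2$ and $\phi(v_1)=\phi(v_2)$, $\phi(u_1)=\phi(u_2)$, one must have $\alpha_1=\alpha_2$. The verification splits by which endpoint of each edge is the parent in its bichromatic tree; the symmetric cases follow immediately from the ``parent determines label'' convention, while the delicate mixed case must be handled by exploiting the fact that the signatures of both endpoints are simultaneously constrained by the structure of the two bichromatic trees. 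For the lower bound, the analogous challenge is a book-keeping one: ensuring that the recursive construction produces all the required special $2$-paths while the bichromatic subgraphs remain forests, so that the acyclic chromatic number does not accidentally exceed $s$.
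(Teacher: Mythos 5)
The paper does not actually prove this statement---it is imported from Ne\v{s}et\v{r}il--Raspaud and Fabila-Monroy et al.\ as a citation---so there is no internal proof to compare against; judging your proposal on its own terms, the upper-bound half has a genuine gap. The labelling rule ``$\lambda_v(j)$ is the type of the adjacency from $v$ to its parent in $F_{ij}$'' does not yield a homomorphism, and the ``delicate mixed case'' you defer is not merely delicate: the consistency claim is false for this labelling. Take $(n,m)=(0,2)$, $s=2$, and the path $r\,a\,b\,c$ with colour classes $V_1=\{r,b\}$, $V_2=\{a,c\}$, where $ra$ and $bc$ are edges of type $1$ and $ab$ is an edge of type $2$; root the single tree of $F_{12}$ at $r$. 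Then $\lambda_a(1)=1$ (edge to the parent $r$) and $\lambda_c(1)=1$ (edge to the parent $b$), so $\phi(a)=\phi(c)$. But $b\in N^{2}(a)\cap N^{1}(c)$, so $a$ and $c$ are joined by a special $2$-path and, by Observation~\ref{obs special 2-path}, cannot share an image under any homomorphism; equivalently, declaring the adjacencies of $T$ to be the $\phi$-images of adjacencies of $G$ forces two differently typed adjacencies between $\phi(b)$ and $\phi(a)=\phi(c)$. No case analysis over which endpoint is the parent can repair this, because the defect lies in the map $\phi$ itself, not in how $T$ is assembled.

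The standard fix (and, in essence, the actual argument behind the cited upper bound) is to make the label a potential rather than a record of the parent edge: identify the $2n+m$ adjacency types, always read from the $V_j$-side of an edge of $F_{ij}$ with $i<j$, with $\mathbb{Z}/(2n+m)\mathbb{Z}$, set $\sigma=0$ at each root, and define $\sigma(v)=\sigma(p(v))\pm\tau(vp(v))$ with the sign determined by whether $v\in V_j$ or $v\in V_i$. Then for every edge $uv$ of $F_{ij}$ with $u\in V_i$, $v\in V_j$ one gets $\tau(uv)=\sigma(v)-\sigma(u)$ independently of which endpoint is the parent, so the type of an adjacency between target vertices $(i,\lambda)$ and $(j,\mu)$ is the well-defined quantity $\mu(i)-\lambda(j)$, and the bound $s(2n+m)^{s-1}$ follows. (In the example above this gives $\phi(a)\neq\phi(c)$, as it must.) Your lower-bound paragraph describes the right strategy---a recursive witness in $\mathcal{A}_s$ realising every pair of signatures by an adjacency or a special $2$-path, then invoking Observation~\ref{obs special 2-path}---but as written it is only an outline: the construction, the verification that all required special $2$-paths occur, and the check that the acyclic chromatic number stays at most $s$ are exactly the substance of the Fabila-Monroy et al.\ argument and are not supplied.
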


However, whether acyclic chromatic number of a graph is bounded by a function of its $(n,m)$-chromatic number is still open. 

\begin{question}\label{ques acyclic}
    Is acyclic chromatic number of a graph $G$ bounded by its $(n,m)$-chromatic number? 
\end{question}

Borodin~\cite{borodin1979acyclic} showed that the acyclic chromatic number of any planar graph is at most $5$. Thus, using $s=5$ in 
Theorem~\ref{th nm-chi vs acyclic},
it is shown that 
for $\chi_{(n,m)}(\mathcal{P}_3) \leq 5(2n+m)^4$~\cite{nevsetvril2000colored}, 
where $\mathcal{P}_3$ denotes the family of planar graphs. 
The lower bound for this parameter is given by a cubic function of $(2n+m)$, that is, 
$\chi_{(n,m)}(\mathcal{P}_3)=\Omega((2n+m)^3)$.
Thus finding the exact value of $\chi_{(n,m)}(\mathcal{P}_3)$ remains an open question and can be considered an analogue of the Four-Color Conjecture (now a Theorem). In a recent breakthrough, Gu\'{s}piel and Gutowski~\cite{seriesb} have shown that $\chi_{(0,m)}(\mathcal{P}_3) = O(m^3)$.

The best (and the only) known possible analogue of the 
Gr\"{o}tzsch's theorem for $(n,m)$-graphs is a result due to 
Montejano, Pinlou, Raspaud, and Sopena~\cite{montejano2009chromatic}
which shows that the $(n,m)$-chromatic number for the family of planar graphs having girth at least $10(2n+m)-4$ is equal to $2(2n+m)+1$. 

\begin{theorem}[\cite{montejano2009chromatic}]\label{th mixed-planar high girth}
Let $\mathcal{P}_{10(2n+m)-4}$ denote the family of planar graphs having girth at least $10(2n+m)-4$. Then we have $\chi_{n,m}(\mathcal{P}_{10(2n+m)-4})=2(2n+m)+1$. 
\end{theorem}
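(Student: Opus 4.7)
The theorem asserts both a lower bound and a matching upper bound; I would handle them separately. For the lower bound $\chi_{n,m}(\mathcal{P}_{10(2n+m)-4}) \geq 2(2n+m)+1$, I would exploit the fact that trees are planar with infinite girth and design a rooted $(n,m)$-tree that forces $2(2n+m)+1$ pairwise distinct images under every homomorphism. The tool is Observation~\ref{obs special 2-path}: starting from a central vertex $v$, I attach, for each of the $2n+m$ types, enough branches so that all pairs among a designated $(2(2n+m)+1)$-element vertex set are either adjacent or joined by a special $2$-path. This makes them a ``pseudo-clique'' with respect to the homomorphism-distinctness relation, yielding the claimed lower bound.

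For the upper bound I would construct a universal target $(n,m)$-graph $T$ on exactly $2(2n+m)+1$ vertices and show that every planar $(n,m)$-graph $G$ with girth at least $10(2n+m)-4$ maps to $T$. A natural choice for $T$ is a Cayley-type structure on $\mathbb{Z}_{2(2n+m)+1}$ in which the non-zero elements are allotted in a balanced fashion across the $2n+m$ types, so that every vertex has an $\alpha$-neighbor of every type $\alpha$ and the graph is vertex-transitive. The homomorphism itself is proved via a minimum-counterexample argument combined with discharging: assign initial charges $\mu(v) = d(v) - 4$ and $\mu(f) = |f| - 4$, summing to $-8$ by Euler's formula. Since girth at least $g := 10(2n+m)-4$ makes every face contribute at least $g - 4$, discharging rules that push charge from long faces to degree-$2$ ``thread'' vertices force the presence either of a vertex of degree $1$ or of a long thread of degree-$2$ vertices between two branch vertices of $G$. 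Both configurations are reducible: by minimality, $G$ minus the configuration maps to $T$, and one extends across the deleted piece using the symmetry of $T$, which guarantees that any prescribed coloring of the thread's endpoints extends along its interior once the length exceeds a threshold depending only on $2n+m$.

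The principal obstacle is the quantitative alignment between the girth bound and the extension lemma. The constant $10$ in $10(2n+m) - 4$ is not arbitrary: it must be tuned so that the charge deficit from short faces, redistributed under the discharging rules, delivers threads of exactly the length at which the Cayley structure of $T$ admits extension from every pair of endpoint colors. Designing $T$ to control the set of ``bad'' endpoint pairs for which a given thread length fails to extend, while simultaneously verifying that discharging produces only threads of adequate length, is the delicate part. Getting this joint optimization to close with the target size exactly $2(2n+m)+1$, rather than a larger constant, is the main technical work, and it is precisely where the subsequent improvements in the paper (with girth $8(2n+m)$ and the related maximum-average-degree bound) are expected to sharpen the present discharging analysis.
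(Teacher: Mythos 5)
First, a point of context: the paper does not actually prove this statement — it is quoted from \cite{montejano2009chromatic} — and what the paper itself proves (Theorem~\ref{th mad} and Corollary~\ref{cor sparse graph}) is a strengthening of its upper-bound half. Your upper-bound sketch follows the same general strategy as that proof: a vertex-transitive complete target on $2(2n+m)+1$ vertices with an expansion property that lets partial homomorphisms propagate along threads (the paper builds it from Walecki's Hamiltonian decomposition of $K_{2(2n+m)+1}$ and isolates the expansion property as Lemma~\ref{lem key1}), a minimal counterexample, and discharging. The differences are that the paper discharges only on vertices against the bound $mad(G) < 2+\frac{2}{4(2n+m)-1}$ (planarity and girth enter only through the maximum average degree), and that its reducible configurations are not merely single long threads but the configurations $C_l$ of Lemma~\ref{lem Csl}, in which several medium-length chains meet at one branch vertex; without that refinement the charge accounting does not close. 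You correctly identify this quantitative step as the crux, but it is left entirely undone, so the upper bound is a plausible sketch rather than a proof.

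The genuine gap is the lower bound. No $(n,m)$-tree forces $2(2n+m)+1$ colors, so the plan of realizing a pseudo-clique of that size inside a tree cannot succeed. For $(n,m)=(1,0)$, every oriented forest admits a homomorphism to the directed triangle (root the tree and extend greedily, since each vertex of the triangle has both an in- and an out-neighbor), so its oriented chromatic number is at most $3$, whereas $2(2n+m)+1=5$; more generally every $(n,m)$-forest maps to a complete target on $2n+m+1$ or $2n+m+2$ vertices in which each vertex has a neighbor of every type. Structurally, any set of at least three vertices that are pairwise at distance at most $2$ in a tree is contained in $\{c\}\cup N(c)$ for a single vertex $c$ (because paths in trees are unique), and two neighbors $u,w$ of $c$ are forced apart by Observation~\ref{obs special 2-path} only when $c\in N^{\alpha}(u)\cap N^{\beta}(w)$ with $\alpha\neq\beta$; since only $2n+m$ types exist, such a set contains at most $(2n+m)+1$ pairwise forced-distinct vertices, strictly fewer than $2(2n+m)+1$. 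The true lower bound requires graphs with (long) cycles: one must show that every candidate target on $2(2n+m)$ vertices is defeated by some planar $(n,m)$-graph of arbitrarily large girth — in essence forcing every vertex of a valid target to have at least two $\alpha$-neighbors of each type — and then take a disjoint union over the finitely many candidates. That argument is absent from your proposal.
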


Borodin, Kim, Kostochka, West~\cite{BORODIN2004147} proved a similar result for
a family of sparse graphs having low maximum average degree and high girth (see~\cite{BORODIN2004147} for the exact result). As a corollary, they
improved Theorem~\ref{th mixed-planar high girth} for $(0,m)$-graphs. 

\begin{corollary}[\cite{BORODIN2004147}]\label{cor borodin sparse}
Let $\mathcal{P}_{\frac{20m-2}{3}}$ denote the family of planar graphs having girth at least $\frac{20m-2}{3}$. Then we have $\chi_{0,m}(\mathcal{P}_{\frac{20m-2}{3}})=2m+1$. 
\end{corollary}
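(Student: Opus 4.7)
The plan is to derive this as a straightforward consequence of the main maximum-average-degree theorem of Borodin, Kim, Kostochka and West together with the classical Euler-formula estimate linking planar girth to $\mathrm{mad}$. Recall that any planar graph $G$ of girth at least $g$ satisfies $\mathrm{mad}(G) < \frac{2g}{g-2}$ (since this bound holds for every subgraph, each of which remains planar with girth $\geq g$). Substituting $g = \frac{20m-2}{3}$ and simplifying,
$$\frac{2g}{g-2} \;=\; \frac{40m-4}{20m-8} \;=\; \frac{10m-1}{5m-2} \;=\; 2 + \frac{3}{5m-2},$$
so every $G \in \mathcal{P}_{\frac{20m-2}{3}}$ satisfies $\mathrm{mad}(G) < 2 + \frac{3}{5m-2}$. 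The first task is then to confirm that this bound matches the hypothesis threshold of the mad-based main theorem of~\cite{BORODIN2004147}, which guarantees a homomorphism of every qualifying $(0,m)$-graph into a fixed target $(0,m)$-graph on $2m+1$ vertices. Invoking that result against our derived mad bound immediately produces the upper bound $\chi_{0,m}(\mathcal{P}_{\frac{20m-2}{3}}) \leq 2m+1$.

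For the matching lower bound, observe that for every $m \geq 1$ one has $10m-4 \geq \frac{20m-2}{3}$ (equivalently, $30m-12 \geq 20m-2$). Consequently $\mathcal{P}_{10m-4} \subseteq \mathcal{P}_{\frac{20m-2}{3}}$, and by monotonicity of the $(n,m)$-chromatic number over graph families,
$$\chi_{0,m}\bigl(\mathcal{P}_{\frac{20m-2}{3}}\bigr) \;\geq\; \chi_{0,m}(\mathcal{P}_{10m-4}) \;=\; 2m+1,$$
where the equality is Theorem~\ref{th mixed-planar high girth} specialized to $n=0$. Concretely, the extremal $(0,m)$-graph witnessing the lower bound in Theorem~\ref{th mixed-planar high girth} is a planar graph of girth at least $10m-4$, hence automatically lies in $\mathcal{P}_{\frac{20m-2}{3}}$. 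Combining the two bounds yields the claimed equality.

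The only real obstacle is the bookkeeping in the first step, namely verifying that the specific numerical threshold $\frac{20m-2}{3}$ on the girth has been calibrated precisely so that the induced mad bound $2 + \frac{3}{5m-2}$ sits inside the admissible range of the main theorem of~\cite{BORODIN2004147}. Beyond that algebraic check, the argument is a purely mechanical concatenation of an existing sparsity theorem with a standard planarity-to-mad estimate, and requires no new combinatorial ingredient.
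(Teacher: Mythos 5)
The paper does not actually prove this statement: it is imported verbatim from~\cite{BORODIN2004147} as background, so there is no in-paper argument to compare yours against. That said, your reconstruction is the intended derivation and is essentially correct. The algebra checks out: for $g=\frac{20m-2}{3}$ one has $\frac{2g}{g-2}=\frac{40m-4}{20m-8}=\frac{10m-1}{5m-2}=2+\frac{3}{5m-2}$, and the girth value $\frac{20m-2}{3}$ is calibrated precisely so that this coincides with the maximum-average-degree threshold of the main theorem of~\cite{BORODIN2004147} (solving $\frac{2g}{g-2}=2+\frac{3}{5m-2}$ for $g$ returns exactly $\frac{20m-2}{3}$). Your lower bound via $10m-4\geq\frac{20m-2}{3}$, the inclusion $\mathcal{P}_{10m-4}\subseteq\mathcal{P}_{\frac{20m-2}{3}}$, and monotonicity of the family chromatic number under inclusion is also fine. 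The one genuine loose end is that you treat the result of~\cite{BORODIN2004147} as a pure $\mathrm{mad}$ statement, whereas (as the present paper itself notes) their theorem is about graphs of \emph{low maximum average degree and high girth}; it carries a girth hypothesis in addition to the $\mathrm{mad}$ bound, which you never state or check. It is automatically satisfied here, since any graph in $\mathcal{P}_{\frac{20m-2}{3}}$ has girth at least $\frac{20m-2}{3}$ and their girth requirement cannot exceed this (otherwise their own corollary would not follow), but as written your argument invokes a theorem whose hypotheses you have not fully verified, so you should state the cited theorem precisely and confirm both conditions.
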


An important conjecture in the theory of nowhere-zero flows is a relaxation of Jagear's Conjecture~\cite{jaeger1984circular} for planar graphs, whose equivalent dual formulation is 
using the notion of circular chromatic number~\cite{vince1988star}. 

\begin{conjecture}\label{conj jagear planar}
Let $G$ be a planar graph with girth $f(g)$.
If $f(g) \geq 4g$, then the circular chromatic number of $G$ is at most 
$2+\frac{1}{g}$.  
\end{conjecture}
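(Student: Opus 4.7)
The plan is to attempt this via a minimal counterexample analysis combined with a discharging argument. Suppose, for contradiction, that $G$ is a planar graph with girth at least $4g$ whose circular chromatic number exceeds $2 + \frac{1}{g}$, chosen so that $|V(G)| + |E(G)|$ is minimum. The first step is to derive structural restrictions on $G$: it must be $2$-connected, have minimum degree at least $2$, and every maximal chain of consecutive degree-$2$ vertices must have length bounded by a function of $g$. The last point follows because a sufficiently long chain can be shortened, the reduced graph coloured by minimality, and the colouring extended across the chain by a rotation argument on the target $C_{2g+1}^{g}$, the canonical witness to circular chromatic number $2 + \frac{1}{g}$.

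The second step is a discharging scheme. By Euler's formula, assigning initial charge $d(v) - 4$ to each vertex $v$ and $\ell(f) - 4$ to each face $f$ yields a total of $-8$. The girth hypothesis $\ell(f) \geq 4g$ makes every face start with charge at least $4g - 4$, which would be pushed to low-degree vertices on the face boundary by suitable local rules. If the reducible configurations ruled out above are absent, one aims to conclude that every element ends with nonnegative charge, contradicting the negative total, so no minimal counterexample exists.

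The honest main obstacle is that the statement is, to the best of my knowledge, a well-known open conjecture: by Vince's duality~\cite{vince1988star} it is equivalent to the planar restriction of Jaeger's circular-flow conjecture. For $g = 1$ it coincides with Gr\"{o}tzsch's theorem, and for $g = 2$ it follows from the Borodin--Kim--Kostochka--West machinery that also yields Corollary~\ref{cor borodin sparse}, but for general $g$ no proof is known. The sticking point is that long chains of degree-$2$ vertices, while locally easy to handle, interact subtly with the fractional flexibility inherent to circular colouring, and no published discharging rule has proven flexible enough to close the residual gap between girth $4g$ and the best provable threshold for $g \geq 3$. A realistic outcome of the plan above would therefore be to recover the known cases or to weaken the girth requirement modestly, not to settle the conjecture as stated.
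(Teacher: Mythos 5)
The statement you were asked to prove is Conjecture~\ref{conj jagear planar}, and the paper does not prove it either: it is recorded as an open conjecture (the planar dual form of Jaeger's circular-flow conjecture), and the paper's actual contribution toward it is only the weaker approximation in Corollary~\ref{cor circular chi}, namely that girth at least $8g$ suffices, obtained as a corollary of Theorem~\ref{th mad} on graphs with maximum average degree below $2+\frac{2}{4(2n+m)-1}$. Your honest assessment is therefore correct, and your sketch --- minimal counterexample, reducibility of long chains of degree-$2$ vertices via extension along the target odd cycle, then discharging --- is essentially the same machinery the paper deploys (its reducible configurations are exactly $k$-chains with $k\geq 2p-1$ and the configuration $C_l$ of Lemma~\ref{lem Csl}, and its charge function encodes the mad bound rather than Euler's formula directly). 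As you anticipate, that machinery yields girth $8g$ here, and even the strongest known result of this flavour (Lov\'{a}sz, Thomassen, Wu, and Zhang) only reaches girth $6g$; no known argument closes the gap to $4g$.

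One factual correction: the case $g=2$ does \emph{not} follow from the Borodin--Kim--Kostochka--West machinery behind Corollary~\ref{cor borodin sparse}. Their girth threshold $\frac{20g-2}{3}$ evaluates to $\frac{38}{3}$ at $g=2$, i.e.\ girth $13$, which is strictly larger than the conjectured $4g=8$; the paper explicitly states that the conjecture remains open for every $g\geq 2$, with only partial density results known for $g=2$ and $g=3$. So your proposal should not claim that any case beyond $g=1$ (Gr\"{o}tzsch's theorem) is settled. With that correction, the right framing of your attempt is that it is a plan for re-deriving the known approximations, not a proof of the statement, and no proof of the statement exists in the paper or, to date, in the literature.
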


Notice that the conjecture is equivalent to the 
Gr\"{o}tzsch's theorem when $g=1$. To date it remains open for all values of $g \geq 2$, even though particular progress for the cases $g =2$ and $3$ are present in the literature~\cite{dvovrak2017density, postle2022density}. 
This conjecture has a rich history of 
general approximation. 
In 1996, Ne\v{s}et\v{r}il and Zhu~\cite{nevsetvril1996bounded}, 
and in 2001,
Galuccio, Goddyn, and Hell~\cite{galluccio2001high} proved Conjecture~\ref{conj jagear planar} for $f(g) \geq 10g - 4$. Later in 2001, Zhu~\cite{zhu2001circular} improved it by proving 
Conjecture~\ref{conj jagear planar} for 
$f(g) \geq 8g-3$. 
Interestingly, in 2004,
Borodin, Kim, Kostochka, and West~\cite{BORODIN2004147}
showed that the proof of Corollary~\ref{cor borodin sparse} implies 
Conjecture~\ref{conj jagear planar} for 
$f(g) \geq \frac{20g-2}{3}$. 
Note that, at the time it was published, this bound was the best known 
approximation of the Jagear's conjecture for planar graphs (Conjecture~\ref{conj jagear planar}). 
In 2013, it was improved by Lov{\'a}sz, Thomassen, Wu, and Zhang~\cite{lovasz2013nowhere} through proving that the circular chromatic number of planar graphs having girth at least $6g$ is at most $2+\frac{1}{g}$.

A direct application of Theorem~\ref{th nm-chi vs acyclic} shows that 
$\chi_{(n,m)}(\mathcal{T}_t) \leq t(2n+m)^{t-1}$, where $\mathcal{T}_t$ denotes the family of graphs having treewidth at most $t$. 
For the specific value $t=1$, the family $\mathcal{T}_t$ is nothing but the family of forests. In that case, the exact values of $\chi_{(n,m)}(\mathcal{T}_1)$ is known~\cite{nevsetvril2000colored}. 
For $\mathcal{T}_2$, that is, the family of \textit{partial $2$-trees}, or \textit{graphs with treewidth bounded by $2$}, or \textit{series-parallel graphs}, or 
\textit{$K_4$-minor-free graphs}, we know that
$\chi_{(n,m)}(\mathcal{T}_t) = O((2n+m)^2)$. 

\begin{theorem}[\cite{fabila2008lower, nevsetvril2000colored}]\label{th partial 2-tree known}
	Let $\mathcal{T}_2$ denote the family of all partial $2$-trees. Then we have 
		\begin{align*}
			(2n+m)^2 + 2(2n+m) +  1 \leq  \chi_{n,m}(\mathcal{T}_2) \leq 3(2n+m)^2, &\text{ for $m>0$ even}\\
			(2n+m)^2 + (2n+m) +  1 \leq  \chi_{n,m}(\mathcal{T}_2) \leq 3(2n+m)^2, &\text{ otherwise}.
		\end{align*}
	\end{theorem}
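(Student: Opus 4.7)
\emph{Upper bound.} My plan is to first observe that every partial $2$-tree admits an acyclic $3$-colouring. Any $2$-tree can be built from a triangle by iteratively attaching a new vertex to an existing edge; colouring each new vertex with the unique colour not present on its anchor edge produces a proper $3$-colouring. Acyclicity follows from considering the last-added vertex $v$ of any hypothetical bichromatic cycle: since $v$'s two cycle-neighbours must be the two endpoints of its anchor edge, those endpoints carry both bichromatic colours, forcing $v$ to carry the third colour, a contradiction. Every partial $2$-tree inherits this colouring as a subgraph of a $2$-tree, so $\chi_a(\mathcal{T}_2)\le 3$, and Theorem~\ref{th nm-chi vs acyclic} applied with $s=3$ then yields $\chi_{n,m}(\mathcal{T}_2)\le 3(2n+m)^2$.

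\emph{Lower bound.} Writing $k=2n+m$, the strategy is to construct an explicit partial $2$-tree $G$ together with an $(n,m)$-labelling so that every pair of vertices of $G$ is either adjacent or joined by a special $2$-path. Observation~\ref{obs special 2-path} then forces $\chi_{n,m}(G)\ge|V(G)|$, and it suffices to reach the claimed vertex count. The backbone of the construction is a ``book'': take a central edge $uv$ and, for every ordered pair $(\alpha,\beta)\in\{1,\ldots,k\}^2$, insert a common neighbour $w_{\alpha,\beta}$ realised as an $\alpha$-neighbour of $u$ and a $\beta$-neighbour of $v$. Each addition is a new triangle glued to $uv$, so the result is a $2$-tree; moreover the $k^2+2$ vertices $u$, $v$ and the $w_{\alpha,\beta}$'s are pairwise separated, because any two distinct pages differ in one coordinate and therefore lie on a special $2$-path through $u$ or $v$. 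To reach $(k+1)^2$ (resp.\ $k^2+k+1$), one then attaches an additional family of $2k-1$ (resp.\ $k-1$) auxiliary vertices by gluing new triangles on suitably chosen edges of the book, with types chosen to preserve pairwise separation via special $2$-paths.

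\emph{Main obstacle.} The main obstacle will be the design and labelling of the auxiliary family. After the book is built, both $u$ and $v$ already carry a neighbour of every type in $\{1,\ldots,k\}$, so a new vertex $z$ adjacent to $u$ cannot be separated from every page through $u$ alone; $z$ must instead be anchored in a triangle whose third vertex is a page $w_{\alpha,\beta}$, and the type on the new edge $zw_{\alpha,\beta}$ must then be chosen so that $z$ is on a special $2$-path through $w_{\alpha,\beta}$ with every otherwise-indistinguishable page. Keeping track of which types remain available and pairwise separating the auxiliary vertices among themselves is what drives the case distinction: in the even-$m$ case the symmetric edge types provide enough freedom to run the construction on both halves of $uv$, yielding $2k-1$ extras, whereas the arc-reversal involution on arc types collapses some of the intended labels in the other case, cutting the count down to $k-1$. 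Once the labelling is pinned down, verifying pairwise distinguishability reduces to a routine case check via Observation~\ref{obs special 2-path}.
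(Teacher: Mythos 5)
A preliminary remark: the paper does not actually prove this theorem --- it is imported from the cited references --- so I am measuring your proposal against the standard argument, whose target-side half is essentially reproduced inside the paper's Section~\ref{sec partial 2 trees} (Lemma~\ref{lem partial2-trees} and the count in the proof of Lemma~\ref{lem partial2trees2}). Your upper bound is correct and is the standard route: partial $2$-trees have acyclic chromatic number at most $3$ (your last-added-vertex argument for acyclicity is sound), and Theorem~\ref{th nm-chi vs acyclic} with $s=3$ gives $3(2n+m)^2$.

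The lower bound, however, has a genuine gap, and I do not think it can be closed within the framework you chose. Write $k=2n+m$. Your book honestly yields only $\chi_{n,m}(\mathcal{T}_2)\ge k^2+2$, and the auxiliary family you defer to ``the main obstacle'' cannot be produced as described. If $z$ is obtained by gluing a triangle onto an edge $uw_{\alpha,\beta}$, its only neighbours are $u$ and $w_{\alpha,\beta}$; since distinct pages are pairwise non-adjacent and $z$ is not adjacent to $v$, the only common neighbour of $z$ and any other page $w_{\gamma,\delta}$ is $u$. So $z$ can be separated from $w_{\gamma,\delta}$ only by a special $2$-path through $u$, which requires $z\in N^{\epsilon}(u)$ with $\epsilon\neq\gamma$; but $\gamma$ ranges over all $k$ types, so $z$ remains unseparated from at least $k-1$ pages $w_{\epsilon,\delta}$, and gluing onto $uv$ (a duplicate page) or onto deeper edges only worsens the common-neighbour bottleneck. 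The deeper issue is structural: Observation~\ref{obs special 2-path} is a per-pair criterion, so your method can never certify more than the maximum size of a pairwise-separated set in a labelled partial $2$-tree, and nothing in the literature (or in your sketch) shows this maximum reaches $k^2+k+1$. The actual proof inverts the quantifiers: one shows that \emph{every} candidate target $T$ must have property $P_{2,1}$, because for each $T$ one can hang your book gadget on a preimage of each edge of $T$ (this is exactly Lemma~\ref{lem partial2-trees}); then a count inside $T$ shows each vertex has at least $k+1$ neighbours of each of the $k$ types, whence $|V(T)|\ge k(k+1)+1=k^2+k+1$, with a further refinement gaining $k$ when $m>0$ is even. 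Replacing ``for every target there is a bad partial $2$-tree'' by a single universal witness is where your plan breaks.
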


For the particular cases when $2n+m = 2$, 
it is known that the lower bounds are tight~\cite{montejano2010homomorphisms, sopena2016homomorphisms}. 
The best known~\cite{montejano2010homomorphisms} bounds when $2n+m=3$ are 
$13 \leq \chi_{0,3}(\mathcal{T}_{2}) \leq 27$ and 
$13 \leq \chi_{1,1}(\mathcal{T}_{2}) \leq 21$. 
Thus, studying the value of $\chi_{0,3}(\mathcal{T}_{2})$ and 
$\chi_{1,1}(\mathcal{T}_{2})$ are natural open problems.

	\paragraph{Our contributions and organization:}In the following, we provide a section-wise overview of the article highlighting our contributions. 

 \medskip
 
\noindent \textit{Section~\ref{sec arboricity}:}
 We show that the  $(n,m)$-chromatic number of a graph $G$ is not bounded above by a function of its arboricity,
	 yet its arboricity is bounded by a function of its $(n,m)$-chromatic number as a response to Question~\ref{ques arboricity}. 
  We also answer Question~\ref{ques acyclic} positively by showing that 
   the acyclic chromatic number of a graph is bounded above by
   its $(n,m)$-chromatic number.
	 These are generalizations of results due to 
	 Kostochka, Sopena and Zhu~\cite{kostochka1997acyclic} proved 
	for $(n,m) = (1,0)$.

	\medskip
	 
	 \noindent \textit{Section~\ref{sec sp graphs}:}
	We show that 
	the $(n,m)$-chromatic number for the family of graphs having maximum average degree less than 
	$2+ \frac{2}{4(2n+m)-1}$ is equal to $2(2n+m)+1$. 
 As a corollary, we improve the result of Theorem~\ref{th mixed-planar high girth} by showing that the $(n,m)$-chromatic number of the family of planar graphs having girth at least $8(2n+m)$ 
 is also equal to $2(2n+m)+1$. 
 This, in turn, implies that the  circular chromatic number of planar graphs having girth at least $8g$ is at most $2+\frac{1}{g}$, which weakly supports Conjecture~\ref{conj jagear planar}. 
 Even though better approximations of Conjecture~\ref{conj jagear planar} are known, given the long history of research progress on it, it is worth mentioning this corollary.

\medskip

	 \noindent \textit{Section~\ref{sec partial 2 trees}:}  We study the $(n,m)$-chromatic numbers of  the family $\mathcal{T}_2$ of partial $2$-trees where $2n+m=3$. In particular, we show that  
 $\chi_{0,3}(\mathcal{T}_2) \in [14,15]$ and $\chi_{1,1}(\mathcal{T}_2) \in [14,21]$. In the process of proving the above results, we improve both the previously known lower bounds and the first upper bound. Moreover, we provide the first theoretical proof for the second upper bound.

\medskip
  
	  \noindent\textit{Section~\ref{sec conclusions}:}  We share our concluding remarks and propose some future research directions.

	\bigskip
	
\noindent \textit{Note:} The results of Section~\ref{sec arboricity} was part of 
CALDAM 2017~\cite{DBLP:conf/caldam/DasNS17} and Section~\ref{sec sp graphs} and Section~\ref{sec partial 2 trees} was part of 
EuroComb 2021~\cite{lahiri2021chromatic}. This version is significantly enhanced with rigorous proof details and with a corrected version of Theorem $9$ in EuroComb 2021~\cite{lahiri2021chromatic}, in turn serving as an erratum to the result.

% In the EuroComb 2021 paper~\cite{lahiri2021chromatic},  
% $\chi_{1,1}(\mathcal{T}_2) \leq 16$ was claimed. Unfortunately we found a bug in our proof afterwards and were not able to fix the issue. However, we could manage to prove $\chi_{1,1}(\mathcal{T}_2) \leq 21$ instead, which is the 
% same as reported in~\cite{montejano2010homomorphisms} without proof.  This article will serve as an erratum to that result as well. 

	\section{Arboricity  and acyclic chromatic number}\label{sec arboricity}
In this section, we study the relation among $(n,m)$-chromatic number, arboricity and acyclic chromatic number. First we show that an $(n,m)$-graph of bounded arboricity can have arbitrarily large $(n,m)$-chromatic number. After that we will prove that both arboricity and acyclic chromatic number of a graph $G$ is bounded by a function of $\chi_{n,m}(G)$.

	\begin{theorem}\label{th not bounded by arboricity function}
		For every positive integer $k \geq 2$ and $r \geq 2$, there exists an $(n,m)$-graph $G_k$ having $arb(und(G_k)) \leq r$ and $\chi_{n,m}(G_k) \geq k$. 
	\end{theorem}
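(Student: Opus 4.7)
The plan is to build $G_k$ as a labelled $1$-subdivision of $K_k$: introduce vertices $v_1,\dots,v_k$ together with, for each pair $i<j$, a fresh subdivision vertex $z_{ij}$ joined to $v_i$ and $v_j$. I then pick arc directions and edge types so that, within each gadget, $v_i$ is an $\alpha$-neighbor of $z_{ij}$ while $v_j$ is a $\beta$-neighbor of $z_{ij}$ with $\alpha\neq\beta$. Since $(n,m)\neq(0,1)$ forces $2n+m\geq 2$, two distinct neighbor-types are always available: if $m\geq 2$ I use two different edge-types $2n+1$ and $2n+2$; if $n\geq 1$ I orient $v_i\to z_{ij}$ and $z_{ij}\to v_j$, making $v_i$ a $1$-neighbor and $v_j$ a $2$-neighbor of $z_{ij}$; mixed cases combine the two.

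The chromatic lower bound is then immediate from Observation~\ref{obs special 2-path}: each $2$-path $v_i z_{ij} v_j$ is, by construction, a special $2$-path, so under any homomorphism $f:G_k\to H$ the images $f(v_i)$ and $f(v_j)$ are distinct for all $i\neq j$. Hence $|V(H)|\geq k$, which gives $\chi_{n,m}(G_k)\geq k$.

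For the arboricity upper bound, I partition the edges of $und(G_k)$ into $F_1=\{v_iz_{ij}:i<j\}$ and $F_2=\{v_jz_{ij}:i<j\}$. In each $F_\ell$ every subdivision vertex has exactly one incident edge, so both $F_1$ and $F_2$ are star forests centred at the $v_i$'s. This exhibits a decomposition of $und(G_k)$ into two forests, so $arb(und(G_k))\leq 2\leq r$ for every $r\geq 2$.

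I do not foresee any real obstacle: the special $2$-path mechanism lets me force $v_i$ and $v_j$ apart independently through their private gadget vertex $z_{ij}$, while subdividing every edge of $K_k$ once dilutes the density just enough that two star forests cover everything. The only point requiring mild care is the case analysis for picking a valid pair $\alpha\neq\beta$ in each value of $(n,m)$, which is handled uniformly by the hypothesis $2n+m\geq 2$.
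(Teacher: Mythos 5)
Your proposal is correct and follows essentially the same route as the paper: replace each edge of $K_k$ by a special $2$-path, invoke Observation~\ref{obs special 2-path} to force the $k$ branch vertices to have distinct images, and observe that the resulting $1$-subdivision has arboricity at most $2 \leq r$. You merely spell out the star-forest decomposition and the choice of distinct types $\alpha \neq \beta$ in more detail than the paper does.
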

	
	\begin{proof}
		Consider the complete graph  $K_k$ on $k$ vertices. 
		For all $(n,m) \neq (0,1)$, 
  it is possible to replace all the edges of $K_k$ by a special $2$-path to obtain an $(n,m)$-graph $G_k'$.
		We know that, the end points of the special $2$-path must have different image under any homomorphism of $G_k'$ by Observation~\ref{obs special 2-path}, and thus $\chi_{(n,m)}(G_k') \geq k$. On the other hand, note that $und(G_k')$ has arboricity $2$. Thus, for $r = 2$ take $G_k = G_k'$.  
		For $r > 2$, simply take the disjoint union of the above $G_k'$ with an $(n,m)$-graph $H$
  satisfying $arb(und(H))=r$.
	\end{proof}
	
	Next we show that it is possible to bound the arboricity of an $(n,m)$-graph by a function of $(n,m)$-chromatic number. 
	
		\begin{theorem}\label{Mixed_chromatic-arboricity}
		Let $G$ be a  graph with $\chi_{n,m}(G) = k$. Then $arb(G) \leq \lceil \log_{2n+m} k + \frac{k}{2} \rceil $. 
	\end{theorem}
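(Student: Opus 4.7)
The plan is to combine the Nash-Williams arboricity formula with a double-counting argument.

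First, I would invoke Nash-Williams' theorem, which gives $arb(G) = \max_{H} \lceil |E(H)|/(|V(H)|-1) \rceil$ where $H$ ranges over subgraphs of $G$ on at least two vertices. The condition $\chi_{n,m}(G) = k$ descends to every subgraph $H$: given any $(n,m)$-labeling of $H$, one extends it arbitrarily to $G$, the hypothesis then provides a homomorphism of labeled $G$ to a $k$-vertex target, and restricting this homomorphism to $H$ shows $\chi_{n,m}(H) \leq k$. So it suffices to bound $|E(H)|$ in terms of $|V(H)|$ for any simple graph $H$ with $\chi_{n,m}(H) \leq k$.

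Next, I would prove the key estimate: for any simple graph $H$ with $\chi_{n,m}(H) \leq k$, one has $|E(H)| \leq |V(H)| \log_{2n+m} k + \binom{k}{2}$. This follows from a double-counting argument. The total number of $(n,m)$-labelings of $H$ is $(2n+m)^{|E(H)|}$, since each edge has $2n+m$ possible labels ($m$ undirected types, or $n$ arc types with either of two orientations). On the other hand, any labeling $\ell$ with $\chi_{n,m}(H_\ell) \leq k$ admits a partition $V(H) = V_1 \cup \cdots \cup V_k$ (from the hom image) such that all $H$-edges between any two classes share the same label. Counting such labelings by (partition, label-per-non-empty-pair): there are at most $k^{|V(H)|}$ partitions, and for each partition at most $(2n+m)^{\binom{k}{2}}$ consistent labelings (one label choice per non-empty pair, at most $\binom{k}{2}$ such pairs). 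Since every labeling of $H$ must be of this form, $(2n+m)^{|E(H)|} \leq k^{|V(H)|} (2n+m)^{\binom{k}{2}}$, and taking $\log_{2n+m}$ yields the claimed estimate.

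Finally, I would combine this with the trivial bound $|E(H)| \leq \binom{|V(H)|}{2}$ and apply Nash-Williams. Setting $x = |V(H)|$: for $x \leq k$, the trivial bound gives $|E(H)|/(x-1) \leq x/2 \leq k/2$; for $x > k$, the counting estimate gives $|E(H)|/(x-1) \leq \log_{2n+m} k + (\log_{2n+m} k + \binom{k}{2})/(x-1)$, which decreases toward $\log_{2n+m} k$. Balancing the two near the crossover $x \approx k$ shows the ratio is at most $\log_{2n+m} k + k/2$, with sub-leading additive terms absorbed by the ceiling; by Nash-Williams this gives the theorem. The main obstacle will be this last optimization step: verifying cleanly that the minimum of the trivial and counting bounds does not exceed $\lceil \log_{2n+m} k + k/2 \rceil$ for any $x$, which demands a careful case split around $x \approx k$ and some attention to small values of $k$.
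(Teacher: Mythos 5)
Your proposal is correct and follows essentially the same route as the paper: Nash--Williams plus the double-counting inequality $(2n+m)^{|E(H)|} \le k^{|V(H)|}(2n+m)^{\binom{k}{2}}$, with a case split at $|V(H)| = k$. For the final step you flag as an obstacle, the paper avoids any ceiling slack in the $|V(H)|>k$ case by writing $\frac{|E(H)|}{|V(H)|} = \frac{|E(H)|}{|V(H)|-1} - \frac{|E(H)|}{|V(H)|(|V(H)|-1)}$ and bounding the subtracted term by $\frac{1}{2}$ via $|E(H)|\le\binom{|V(H)|}{2}$, which gives $\frac{|E(H)|}{|V(H)|-1} \le \log_{2n+m}k + \frac{k}{2}$ exactly.
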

	
	\begin{proof}
		We know from 
		Nash-Williams' Theorem~\cite{nash1961edge}  that the arboricity $arb(G)$ of any graph $G$ is equal to the maximum of 
		$\lceil \frac{|E(G')|}{|V(G')| - 1} \rceil$ taken over all subgraphs $G'$ of $G$. 
  It is sufficient to prove that for any subgraph $G'$ of $G$, 
		$\frac{|E(G')|}{|V(G')| - 1} \leq \log_{2n+m} k + \frac{k}{2}$.

		Considering $G'$ as a labeled graph,  there are $(2n+m)^{|E(G')|}$ different $(n,m)$-graphs having underlying graph $G'$. 
  As $\chi_{n,m}(G) = k$, for any $(n,m)$-graph $G''$ with $und(G'') = G'$, there exists a homomorphism of $G''$ to an $(n,m)$-graph $G_k$ having $und(G_k) = K_k$. 
		Observe that, even though it is not necessary for 
		$G_k$ to have the complete graph as its underlying graph, we can always add some extra edges or arcs to make $G_k$ have that property. 
		Note that the number of possible homomorphisms of $G''$ to $G_k$ is at most $k^{|V(G'')|} = k^{|V(G')|}$.
		For each such homomorphism of  $G''$ to $G_k$ there are at most $(2n+m)^{k \choose 2}$ different 
  $(n,m)$-graphs with underlying labeled graph  $G'$ as there are $(2n+m)^{k \choose 2}$ choices of $G_k$. 
		Therefore,
		\begin{equation}\label{Mixed_eqn wolog}
			(2n+m)^{k \choose 2}   k^{|V(G')|} \geq (2n+m)^{|E(G')|}
		\end{equation}
		which implies 
		\begin{equation}\label{Mixed_eqn wlog}
			\log_{2n+m} k \geq \frac{|E(G')|}{|V(G')|} - \frac{{k \choose 2}}{|V(G')|}.
		\end{equation}  
If $|V(G')| \leq k$, then 
  $$\frac{|E(G')|}{|V(G')| - 1} \leq \frac{|V(G')|}{2} \leq \frac{k}{2}.$$ 
  If $|V(G')| > k$, then, as $\chi_{n,m}(G') \leq \chi_{n,m}(G) = k$, we have		
		\begin{equation*}
			\begin{split}
				\log_{2n+m} k & \geq \frac{|E(G')|}{|V(G')|} - \frac{k(k - 1)}{2 |V(G')|} \\
				& \geq \frac{|E(G')|}{|V(G')| - 1} - \frac{|E(G')|}{|V(G')|(|V(G')| - 1)} - \frac{k - 1}{2} \\
				& \geq \frac{|E(G')|}{(|V(G')| -1)} - \frac{1}{2} - \frac{k}{2} + \frac{1}{2} \\
				& \geq \frac{|E(G')|}{(|V(G')| -1)} - \frac{k}{2}.
			\end{split}
		\end{equation*}
		
		Therefore, $\frac{|E(G')|}{(|V(G')| -1)} \leq \log_{2n+m}k +\frac{k}{2}$. 
	\end{proof}

	The above two results address Question~\ref{ques arboricity}. Next we will answer Question~\ref{ques acyclic}, however, before that we will prove another interesting relation that connects arboricity, acyclic chromatic number, and  $(n,m)$-chromatic number of a graph.

	\begin{theorem}\label{Mixed_arboricity.chromatic-acyclic}
		Let $G$ be a graph with $arb(G) = r$ and $\chi_{n,m}(G) = k$. 
  Then $\chi_a(G) \leq k^{\lceil \log_{2n+m} r \rceil +1}$.
	\end{theorem}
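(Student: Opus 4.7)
The plan is to compose $s+1$ suitable $(n,m)$-colorings of $G$, where $s=\lceil \log_{2n+m} r \rceil$, into a single acyclic coloring that uses at most $k^{s+1}$ colors. First, fix a total ordering $<$ on $V(G)$ and, using $arb(G)=r$, decompose $E(G)$ into $r$ edge-disjoint forests $F_1,\ldots,F_r$. Assign each $F_i$ a distinct tag $(\ell_1^i,\ldots,\ell_s^i)\in\{1,2,\ldots,2n+m\}^s$, which is feasible because $(2n+m)^s\ge r$.

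For each $j\in\{1,\ldots,s\}$, I construct an $(n,m)$-graph $G_j$ with $und(G_j)=G$ by declaring, for every edge $uv\in F_i$ with $u<v$, that $v$ is an $\ell_j^i$-neighbor of $u$ in $G_j$; the value of $\ell_j^i$ uniquely determines whether this adjacency is realized as an arc (and in which direction) or as an edge. Additionally, I construct $G_0$ from $G$ by orienting every edge $uv$ with $u<v$ as an arc $u\to v$ of one fixed type (say type $2$ when $n\ge 1$; if $n=0$, take $G_0$ to be any fixed $(n,m)$-structure on $G$). Since $\chi_{n,m}(G)=k$, each $G_j$ with $j\in\{0,1,\ldots,s\}$ admits an $(n,m)$-coloring $c_j\colon V(G)\to\{1,\ldots,k\}$; set $c(v)=(c_0(v),c_1(v),\ldots,c_s(v))$, a coloring using at most $k^{s+1}$ colors.

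To verify that $c$ is acyclic, suppose for contradiction that a bichromatic cycle $v_1v_2\cdots v_tv_1$ exists, so $c(v_i)=c(v_{i+2})$ for every $i$ modulo $t$. Applying Observation~\ref{obs special 2-path} to each $G_j$, every $2$-path $v_iv_{i+1}v_{i+2}$ of the cycle must be non-special in each of $G_0,G_1,\ldots,G_s$. A short case analysis on the ordering-rank of $v_{i+1}$ is the crux: in $G_0$ such a $2$-path is special whenever $v_{i+1}$ is monotonic with respect to the ordering (i.e., $v_i<v_{i+1}<v_{i+2}$ or the reverse), so $v_{i+1}$ must be a local extremum; and under that assumption, computing the adjacency types from $v_{i+1}$'s perspective in each $G_j$ shows that non-specialness is equivalent to $\ell_j^{i_a}=\ell_j^{i_b}$, where $F_{i_a},F_{i_b}$ are the forests containing the two edges of the $2$-path. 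Across all $j$ this forces $i_a=i_b$, so both edges lie in one forest; iterating around the cycle places every edge of the cycle into a single forest $F_i$, contradicting the acyclicity of $F_i$.

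The main obstacle is the asymmetry between an arc type and its reverse: an arc of type $2\alpha$ looks like a $2\alpha$-neighbor from one endpoint but like a $(2\alpha-1)$-neighbor from the other. Because of this, the special-$2$-path test inside $G_j$ alone cannot distinguish a label $\ell$ from its conjugate (which swaps $2\alpha\leftrightarrow 2\alpha-1$) when the middle vertex is monotonic in the ordering. The auxiliary graph $G_0$ is introduced precisely to detect these monotonic middles via the fixed vertex order; this is the source of the ``$+1$'' in the exponent. For $n=0$ the conjugation is trivial and $G_0$ carries no essential information, yet the stated bound $k^{s+1}$ remains valid.
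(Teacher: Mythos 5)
Your proof is correct and follows essentially the same route as the paper: decompose into $r$ forests, tag them with base-$(2n+m)$ strings of length $s$, take the product of $s+1$ homomorphic colorings where the extra coordinate $G_0$ records the vertex ordering so that the middle vertex of any bichromatic $2$-path is forced to be a local extremum, and then invoke Observation~\ref{obs special 2-path} in the coordinate where the two forests' tags differ. Your explicit handling of the arc/reverse-arc conjugation is a slightly more careful treatment of a point the paper passes over quickly, but the argument is the same.
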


	\begin{proof}
		Let $v_1, v_2, \ldots, v_g$ be some ordering of the vertices of $G$. 
		Now consider the $(n,m)$-graph $G_0$ with underlying graph $G$ such that for any $i < j$ we have 
		$v_j \in N^{1}(v_i)$ whenever $v_iv_j$ is 
		an edge of $G$. 
		
		Note that the edges of  $G$ can be covered by $r$ edge disjoint forests $F_1, F_2, ..., F_r$ as $arb(G) = r$. 
		For all $i \in \{1,2,\ldots,r\}$, let $b_i$ be the base $2n+m$ representation of the integer $i$.
		 Note that $b_i$ has at most $b = \lceil \log_{2n+m}r \rceil$ digits.

		Now we will construct a sequence of $(n,m)$-graphs 
		$G_1, G_2, \ldots, G_b$ each having underlying graph $G$. 
		For $l \in \{1,2, \ldots,b\}$ we now describe the construction of the $(n,m)$-graph $G_l$. 
		Consider any edge $v_iv_j$ of $G$ where $i <j$. 
		Then $v_iv_j$ is an edge of the forest $F_{q}$ 
  for some $q \in \{1,2, \ldots,r\}$. 
  We denote the $l^{th}$ digit of $b_{q}$  be $l(q)$.
  Then  $G_l$ is constructed from $G_0$ by changing the adjacency type of the edge 
  $v_iv_j$ in such a way that 
		we have $v_j \in N^{l(q)+1}(v_i)$ in $G_l$.
		
		Recall that $\chi_{n,m}(G) \leq k$ and the underlying graph of $G_l$ is $G$. Thus, for each $l \in \{1, 2, \ldots , b \}$ there exists a graph $H_l$ on $k$ vertices and a homomorphism $f_l : G_l \rightarrow H_l$. 
        We denote the vertices of all the $H_l$'s by $1, 2, \ldots, k$. 
		Now we claim that $f(v) = (f_0(v), f_1(v), \ldots, f_b(v))$ for each $v \in V(G)$ is an acyclic coloring of $G$. Notice that the co-domain of $f$ has cardinality $k^{b +1}$.

%----------------------------------------------------

		For adjacent vertices $u,v$ in $G$, clearly, we have 
		$f(u) \neq f(v)$ as $f_0(u) \neq f_0(v)$ in particular. 
		Let $C$ be a cycle in $G$. We have to show that at least three colors have been used to color this cycle with respect to the coloring given by $f$. Note that in $C$ there must be two incident edges $uv$ and $vw$ such that they belong to different forests, 
		say, $F_q$ and $F_{q'}$, respectively.
		Now suppose that $C$ received two colors with respect to $f$.
		Then we must have $f(u) = f(w) \neq f(v)$. In particular we must have 
		$f_0(u) = f_0(w) \neq f_0(v)$. 
		To have that we must also have $u,w \in N^{\alpha}(v)$ for some $\alpha \in \{1,2,\ldots, 2n+m\}$ in $G_0$ (even though $\alpha$ can only take the value $1$ or $2$ in this case). 
		Let  $b_q$ and $b_{q'}$ differ at their $j^{th}$ digit. Thus, $uvw$ must be a special $2$-path in $G_j$.
Hence, by Observation~\ref{obs special 2-path}, we know that
$f_j(u) \neq f_j(w)$, and thus, $f(u) \neq f(w)$, a contradiction. 
		Thus, at least $3$ colors must be used in coloring the cycle $C$. 
	\end{proof}

	Thus, combining   Theorem~\ref{Mixed_chromatic-arboricity} and~\ref{Mixed_arboricity.chromatic-acyclic},
	$\chi_a(G) \leq k^{\lceil \log_{2n+m} \lceil \log_{2n+m}k + \frac{k}{2} \rceil \rceil +1}$ for $\chi_{n,m}(G) = k$. 
	We prove the
	following bound which is better in all cases except for some small values of $k$.

	\begin{theorem}\label{chromatic-acyclic}
		Let $G$ be an $(n,m)$-graph with  $\chi_{n,m}(G) = k$ and  $k \geq 4$. 
  Then 
		$\chi_a(G) \leq k^2 +k^{2+ \lceil \log_{2n+m} \log_{2n+m} k \rceil}$.
	\end{theorem}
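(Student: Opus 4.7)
The plan is to partition $V(G) = A \sqcup B$ into a ``dense core'' $B$ with $|B| < k^2$ and a ``sparse tail'' $A$ of low degeneracy, color $B$ with distinct colors from a palette $P_1$ of size $k^2$, and color $G[A]$ acyclically with a disjoint palette $P_2$ of size at most $k^{2+\lceil \log_{2n+m} \log_{2n+m} k \rceil}$ via a degeneracy-based refinement of the argument of Theorem~\ref{Mixed_arboricity.chromatic-acyclic}. Assembling both colorings then gives an acyclic coloring of $G$ with at most $|P_1|+|P_2|$ colors.

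For the partition, set $d := \lceil 2 \log_{2n+m} k \rceil$ and iteratively peel any vertex whose current degree is at most $d$; let $B$ be the surviving set and $A$ be the peeled set, listed in peeling order $v_1,v_2,\ldots$. The subgraph $G[B]$ has minimum degree exceeding $d$, so the density bound $|E(G')| \leq |V(G')|\log_{2n+m} k + \binom{k}{2}$ from the proof of Theorem~\ref{Mixed_chromatic-arboricity} forces $|B|\bigl((d+1)/2 - \log_{2n+m}k\bigr) < \binom{k}{2}$, and since $(d+1)/2 - \log_{2n+m} k \geq 1/2$ this gives $|B| < k(k-1) \leq k^2$. The peeling order provides an acyclic orientation of $G[A]$ of out-degree at most $d$, and for each edge $v_iv_j$ with $i<j$ I label it by the index $\pi(v_iv_j)\in\{1,\ldots,d\}$ of $v_j$ in $v_i$'s out-neighbor list. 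Writing $\pi$ in base $2n+m$ with $\lceil \log_{2n+m} d \rceil$ digits, for each digit index $l$ I form the $(n,m)$-graph $G_l$ on $G[A]$ whose edge $v_iv_j$ has type determined by the $l$-th digit of $\pi(v_iv_j)$; since $\chi_{n,m}(G[A]) \leq k$, each $G_l$ admits a homomorphism $f_l$ to an $(n,m)$-graph on $k$ vertices. Combined with the baseline $f_0$ from the all-type-$1$ labeling $G_0$, the tuple $(f_0,f_1,\ldots,f_{\lceil \log_{2n+m} d\rceil})$ produces a coloring of $G[A]$ drawn from a palette $P_2$ of size $k^{\lceil \log_{2n+m} d \rceil + 1}$.

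To see that this coloring is acyclic on $G[A]$, note that every cycle $C$ in $G[A]$ contains a \emph{source} $v^*$ (the vertex of minimum rank in $C$ under the peeling order), whose two cycle-neighbors are distinct out-neighbors and therefore receive distinct $\pi$-values; those $\pi$-values differ in some digit $l$, so the two neighbors form a special $2$-path through $v^*$ in $G_l$ and must receive distinct $f_l$-values by Observation~\ref{obs special 2-path}, ruling out any bichromatic proper coloring of $C$. For a cycle $C$ of $G$ crossing both parts, either $|V(C)\cap B|\geq 2$ (giving two distinct $P_1$-colors plus at least one $P_2$-color), or $|V(C)\cap B|=1$, in which case $C$ minus that vertex is an $A$-path of length at least $2$ using two distinct $P_2$-colors, together with the single $P_1$-color of the $B$-vertex. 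In either case $C$ uses at least three colors, so the combined coloring is acyclic on $G$. A short case analysis using $k\geq 4$ (splitting into $2n+m=2$ and $2n+m\geq 3$ for the ceiling arithmetic, since $d \leq (2n+m)^{1+\lceil \log_{2n+m}\log_{2n+m} k \rceil}$ in each regime) confirms $\lceil \log_{2n+m} d \rceil + 1 \leq 2+\lceil \log_{2n+m}\log_{2n+m} k \rceil$, yielding $|P_1|+|P_2|\leq k^2+k^{2+\lceil \log_{2n+m}\log_{2n+m} k \rceil}$. The main obstacle is establishing the degeneracy-based variant of Theorem~\ref{Mixed_arboricity.chromatic-acyclic}, where the forest decomposition is replaced by the acyclic orientation of out-degree at most $d$ and the argument relies on the existence of a source in every cycle, together with the arithmetic verifying the exponent bound in the boundary cases.
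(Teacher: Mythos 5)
Your proof is correct and lands on the same bound, but it implements both halves of the argument differently from the paper. The paper isolates the dense part by an extremal-density argument: it takes the largest subgraph $G''$ exceeding the maximal density threshold $t$, shows $|V(G'')|<k^2$, and then bounds the \emph{arboricity} of $G-G''$ via Nash--Williams (using inequality~(\ref{Mixed_eqn wlog}) to get $t\leq \log_{2n+m}k+\frac12$, hence $r\leq 1+\lceil\log_{2n+m}k\rceil$ forests), so that Theorem~\ref{Mixed_arboricity.chromatic-acyclic} can be invoked verbatim as a black box with $b\leq 1+\lceil\log_{2n+m}\log_{2n+m}k\rceil$. You instead peel low-degree vertices to get a core $B$ of minimum degree exceeding $d=\lceil 2\log_{2n+m}k\rceil$ --- the same counting inequality then gives $|B|\leq k(k-1)<k^2$ --- and you replace the forest decomposition by the acyclic orientation of out-degree at most $d$ coming from the peeling order, which forces you to reprove the product-coloring lemma in the orientation setting rather than cite Theorem~\ref{Mixed_arboricity.chromatic-acyclic}. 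What your version buys is the avoidance of Nash--Williams and a cleaner acyclicity argument: at the minimum-rank vertex of any cycle both incident cycle-edges point outward and carry distinct labels in $\{1,\dots,d\}$, so they differ in some digit and form a special $2$-path in the corresponding $G_l$; this localizes the special-$2$-path at a single source vertex, whereas the paper's forest-based argument has to locate two incident edges of the cycle lying in different forests and then reason about their adjacency types relative to the base labeling $G_0$. What it costs is that the key lemma must be re-established rather than reused, and the final ceiling arithmetic ($\lceil\log_{2n+m}\lceil 2\log_{2n+m}k\rceil\rceil\leq 1+\lceil\log_{2n+m}\log_{2n+m}k\rceil$), which you correctly flag and which does check out in all regimes including $\log_{2n+m}k<1$, is slightly more delicate than the paper's $r\leq 1+\lceil\log_{2n+m}k\rceil$ computation. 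Note that both routes rest on inequality~(\ref{Mixed_eqn wolog}), which requires every $(n,m)$-labeling of the relevant underlying subgraph to admit a homomorphism to a $k$-vertex target; you inherit this reliance from the paper rather than introducing a new gap.
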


	\begin{proof}
		Let $t$ be the maximum real number such that there exists a subgraph $G'$ of $G$ with $|V(G')| \geq k^2$ and $|E(G')| \geq t|V(G')|$.
		Let $G''$ be the biggest subgraph of $G$ with 
  $|E(G'')| > t|V(G'')|$. 
  Thus, by maximality of $t$, $|V(G'')| < k^2$.
		
		Let $G_0 = G - G''$. Hence $\chi_a(G) \leq \chi_a(G_0) + k^2$.
		By maximality of $G''$, for each subgraph $H$ of $G_0$, we have $|E(H)| \leq t|V(H)|$.

		If $t \leq \frac{|V(H)| - 1}{2}$, then 
  $$|E(H)| \leq (t + \frac{1}{2})(|V(H)| -1).$$ 
  If $t > \frac{|V(H)| - 1}{2}$, then $\frac{|V(H)|}{2} < t + \frac{1}{2}$. 
  Thus we have 
  $$|E(H)| \leq \frac{(|V(H)| -1).|V(H)|}{2} \leq (t + \frac{1}{2})(|V(H)| -1).$$ 
  Therefore, $|E(H)| \leq (t + \frac{1}{2})(|V(H)| -1)$ for each subgraph $H$ of $G_0$.
		
		By Nash-Williams' Theorem~\cite{nash1961edge}, there exists $r = \lceil t + \frac{1}{2} \rceil$ forests $F_1, F_2, \ldots, F_r$ which covers all the edges of $G_0$. We know from Theorem~\ref{Mixed_arboricity.chromatic-acyclic}, $\chi_a(G_0) \leq k^{b +1}$ where 
  $b = \lceil \log_{2n+m}r \rceil$.

		Using  inequality~(\ref{Mixed_eqn wlog}) we get $\log_{2n+m} k \geq t - \frac{1}{2}$. Therefore		
		$$b = \lceil \log_{2n+m}(\lceil t + \frac{1}{2}\rceil)\rceil \leq \lceil \log_{2n+m}(1 + \lceil \log_{2n+m}k \rceil)\rceil \leq 1 + \lceil \log_{2n+m} \log_{2n+m}k \rceil.$$
		
		Hence $\chi_a(G) \leq k^2 +k^{2+ \lceil \log_{2n+m} \log_{2n+m}k \rceil}$. 
	\end{proof}

	\section{Maximum average degree}\label{sec sp graphs}
The main result of this section deals with $(n,m)$-chromatic number for the family of planar graphs with a given girth. It is previously known by a result of Monetjano, Pinlou, Raspaud and Sopena~\cite{montejano2009chromatic} that the $(n,m)$-chromatic number for the family of planar graphs having girth at least $10 (2n+m) - 4$ is equal to a linear function of $(2n+m)$. We improve this result by proving that the same bound is attained even for the family of planar graphs having girth at least $8(2n+m)$. To achieve this, we first present a tight bound for $(n,m)$-chromatic number of graphs having bounded maximum average degree.  
	The 
	\textit{maximum average degree} of a graph $G$, denoted by $mad(G)$ is given by 
	$$mad(G) = \max \left\lbrace \frac{2|E(H)|}{|V(H)|} : \text{ $H$ is a subgraph of $G$} \right\rbrace .$$

	\begin{theorem}\label{th mad}
		Let $G$ be a graph with $mad(G) < 2 + \frac{2}{4(2n+m)-1}$. Then  $\chi_{n,m}(G) = 2(2n+m)+1$.
	\end{theorem}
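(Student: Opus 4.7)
Let me set $k = 2n+m$ throughout. The plan is to establish both inequalities behind the equality $\chi_{n,m}(G) = 2k+1$. For the lower bound, I would invoke Theorem~\ref{th mixed-planar high girth}: the planar graphs witnessing $\chi_{n,m}(\mathcal{P}_{10k-4}) = 2k+1$ have girth at least $10k-4 \geq 8k$ for $k \geq 2$; since a finite planar graph of girth $g$ satisfies $2|E(H)|/|V(H)| < 2g/(g-2)$ on every subgraph $H$ (a standard consequence of Euler's formula), such graphs satisfy $mad < 2g/(g-2) \leq 8k/(4k-1) = 2 + \tfrac{2}{4k-1}$, hence lie in our family while already attaining $\chi_{n,m} = 2k+1$.

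For the upper bound, I would follow the template used in Ne\v{s}et\v{r}il--Raspaud and Montejano--Pinlou--Raspaud--Sopena. First, fix a universal target $(n,m)$-graph $T^{*}$ on exactly $2k+1$ vertices in which every vertex has precisely two $\alpha$-neighbors for every $\alpha \in \{1, 2, \ldots, 2n+m\}$. Concretely, take the vertex set to be $\mathbb{Z}_{2k+1}$ and partition its non-zero elements into blocks: each edge type uses a block $\{a,-a\}$, and each arc-type/reverse-type pair uses a four-element block split as $\{a,b\}$ (the arc direction) and $\{-a,-b\}$ (its reverse). Then assume, for contradiction, that $G$ is a counterexample with the minimum possible number of edges.

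Minimality yields the two reducible configurations driving the rest of the argument. First, $G$ has minimum degree at least $2$: any vertex $v$ of degree $\leq 1$ can be removed, $G-v$ mapped to $T^{*}$ by minimality, and the image of $v$ chosen among the two $\alpha$-neighbors of the image of its (at most one) neighbor. Second, I would prove the key combinatorial lemma bounding the length of any thread, that is, a maximal subpath $v_0 v_1 \cdots v_{\ell+1}$ whose interior vertices have degree $2$. Extending a homomorphism from $G \setminus \{v_1,\ldots,v_\ell\}$ to the thread amounts to realizing the prescribed difference $\phi(v_{\ell+1}) - \phi(v_0)$ as a sum of $\ell+1$ elements drawn from $2$-element subsets $T_0, T_1, \ldots, T_\ell \subset \mathbb{Z}_{2k+1}$ determined by the edge types and orientations. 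By a Cauchy--Davenport-style estimate in $\mathbb{Z}_{2k+1}$, the sumset $T_0 + T_1 + \cdots + T_\ell$ covers $\mathbb{Z}_{2k+1}$ once $\ell$ exceeds a threshold linear in $k$, so $G$ contains no thread beyond that length.

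The final step is discharging. I would set $\omega(v) = d(v) - 2 - \tfrac{2}{4k-1}$; the hypothesis $mad(G) < 2 + \tfrac{2}{4k-1}$ forces $\sum_v \omega(v) < 0$. The rule is for each vertex of degree $\geq 3$ to push charge into its incident threads so that every degree-$2$ vertex covers its $\tfrac{2}{4k-1}$ deficit, while the thread-length bound restricts the total demand at each hub; after discharging, every vertex has non-negative charge, contradicting the negativity of the total. The principal obstacle, in my view, is precisely this calibration: a direct Cauchy--Davenport threshold only makes threads of length $\gtrsim 2k-1$ reducible, whereas a uniform per-thread push from a degree-$3$ hub sustains threads only up to roughly $(4k-3)/3$, so the two pieces do not match for $k \geq 2$ out of the box. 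Bridging this gap requires either sharpening the reducibility lemma by exploiting the specific generators chosen for $T^{*}$ (making shorter threads reducible) or designing a finer edge-based discharging scheme; reconciling the two to yield the sharp constant $\tfrac{2}{4k-1}$ is the delicate part of the proof.
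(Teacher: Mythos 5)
Your overall architecture matches the paper's: a target $(n,m)$-graph on $2k+1$ vertices with exactly two $\alpha$-neighbors per vertex per type, a minimal counterexample, reducibility of long chains, and discharging with charge $d(v)-2-\frac{2}{4k-1}$. Your lower-bound argument is also fine. But you have correctly diagnosed that your assembly does not close, and the two repairs you suggest are not viable. Making \emph{individual} shorter chains reducible is impossible for any choice of generators: since every vertex of the target has exactly two $\alpha$-neighbors, the set of reachable images along a chain grows by only one element per step, so a chain with $\ell$ interior vertices whose edges all carry the same type reaches only about $\ell+2$ of the $2k+1$ target vertices and therefore misses some prescribed endpoint image whenever $\ell<2k-1$. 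No recalibration of a per-chain discharging rule can absorb three chains of length $2k-2$ at a degree-$3$ vertex.

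The missing idea is a \emph{joint} reducible configuration (Lemma~\ref{lem Csl} of the paper): if a vertex $v$ is chain-adjacent to $l$ vertices $v_1,\ldots,v_l$ via chains with $k_1,\ldots,k_l$ interior vertices, then each chain, read from $v_i$ toward $v$, leaves at least $k_i+2$ admissible images for $v$ --- this uses the expansion property $|S|<|N^{\alpha}(S)|$ of the target (Lemma~\ref{lem key1}), which plays the role of your Cauchy--Davenport step but is proved directly for the Walecki-decomposition target, thereby also sidestepping the issue that $2k+1$ need not be prime. Hence each chain forbids at most $2k-k_i-1$ of the $2k+1$ candidate images of $v$, and if $\sum_i(2k-k_i-1)\leq 2k$, equivalently $\sum_i k_i\geq (2k-1)l-2k$, some image for $v$ survives and the entire star of chains is reducible. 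This threshold is exactly what the discharging rule (each vertex of degree at least $3$ donates $\frac{1}{4k-1}$ to every degree-$2$ vertex on its incident chains) requires: the final charge of a degree-$d$ hub is at least $\frac{2k(d-3)}{4k-1}\geq 0$. So the calibration you flag as the delicate part is resolved neither by sharpening per-chain reducibility nor by an edge-based scheme, but by charging the forbidden values contributed by \emph{all} incident chains against the $2k+1$ available images of the hub simultaneously; without this configuration your proof has a genuine gap.
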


	We begin the proof of Theorem \ref{th mad} by describing a
	complete $(n,m)$-graph on $(2p+1)$ vertices where $p = 2n+m$. 
	We know that there exists a Hamiltonian  decomposition of $K_{2p+1}$ 
	due to Walecki~\cite{alspach2008wonderful} which we are going to describe in the following. 
	To do so, first we will label the vertices of $K_{2p+1}$ is a certain way. 
	Let one specific vertex of it be labeled by the symbol $\infty$ while the other vertices are labeled by the elements of the cyclic group $\mathbb{Z}/2p\mathbb{Z}$. 
	Let $C_0, C_1, \ldots, C_{p-1}$ 
	be the edge disjoint Hamiltonian cycles of the decomposition where 
	$C_j$ is the cycle 
	$$\infty (2p+j) (1+j) (2p-1+j) \ldots (p-1+j) (2p - (p-1)+j) (p+j) \infty$$

	For each 
	$\alpha \in \{2,4, 6 \ldots 2n\}$  convert the cycles $C_{\alpha - 2}$  
	and $C_{\alpha - 1}$ to directed cycles having arcs of color $\alpha$. 
	For each $\alpha \in \{2n+1, n+2, \ldots, 2n+m\}$, 
	convert the cycle $C_{\alpha-1}$
	into a cycle having all edges of color $\alpha$. 
	Thus what we obtain is a complete $(n,m)$-mixed graph on 
	$2p+1$ vertices. We call this so-obtained complete $(n,m)$-graph as $T$.  We now prove a useful property of $T$.

	\begin{lemma}\label{lem key1}
		For every $S \subsetneq V(T)$ we have $|S| <  |N^{\alpha}(S)|$ for all $\alpha \in \{1, 2, \ldots 2n+m \}$. 
	\end{lemma}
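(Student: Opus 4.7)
The plan is based on the observation that for each $\alpha$, the $\alpha$-adjacency structure of $T$ is $2$-regular: a single odd Hamiltonian cycle $C_{\alpha-1}$ when $\alpha$ is an edge type, and the out-arc (resp.\ in-arc) subgraph of two directed Hamiltonian cycles $C_{\alpha-2}, C_{\alpha-1}$ when $\alpha$ is an arc (resp.\ reverse-arc) type. A standard double-counting in the associated $2$-regular bipartite neighbourhood graph already gives $|N^\alpha(S)| \geq |S|$ for every $S$, so it only remains to rule out equality whenever $\emptyset \neq S \subsetneq V(T)$, and the argument splits according to whether $\alpha$ is an edge type or an arc/reverse-arc type.

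For an edge type $\alpha$, I would decompose $V(T)$ along the odd cycle $C_{\alpha-1}$ into alternating maximal runs of $S$ and of $S^c$. Letting $k \geq 1$ be the common number of such runs and writing $s$ (resp.\ $g$) for the number of $S$-runs (resp.\ $S^c$-runs) of length one, a direct count of ``isolated'' $S$-vertices and ``boundary'' $S^c$-vertices yields
$$|N^\alpha(S)| \;=\; (|S|-s) + (2k-g),$$
so the desired $|N^\alpha(S)| > |S|$ becomes $2k > s+g$. Since trivially $s, g \leq k$, equality would force $s = g = k$, in which case every run has length one and hence $|V(T)| = 2k$ is even, contradicting $|V(T)| = 2p+1$.

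For an arc type $\alpha$, set $\sigma = \mathrm{succ}_{C_{\alpha-2}}$ and $\tau = \mathrm{succ}_{C_{\alpha-1}}$, so that $N^\alpha(S) = \sigma(S)\cup\tau(S)$. Since $\sigma,\tau$ are bijections, $|N^\alpha(S)| \geq |S|$ with equality iff $\sigma(S) = \tau(S)$, iff $S$ is invariant under $\pi := \tau^{-1}\sigma$; so it suffices to show that $\pi$ is a single $(2p+1)$-cycle on $V(T)$. The Walecki cycles satisfy $C_{j+1} = \rho(C_j)$ for the shift $\rho \colon v \mapsto v+1$ on $\mathbb{Z}/2p\mathbb{Z}$ (fixing $\infty$), so $\sigma_{j+1} = \rho\sigma_j\rho^{-1}$ and consequently $\sigma_{j+1}^{-1}\sigma_j = \rho^{j}\pi_0\rho^{-j}$ with $\pi_0 = \rho\sigma_0^{-1}\rho^{-1}\sigma_0$; thus all these permutations are conjugate and it is enough to treat $\pi_0$. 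Using the explicit description of $\sigma_0$, a short case analysis on $v$ (distinguishing the ``boundary'' vertices $v \in \{\infty, 0, p-1, p, 2p-2, 2p-1\}$ from the ``generic'' ones in $\{1,\ldots,p-2\} \cup \{p+1,\ldots,2p-3\}$, where $\pi_0(v) = v+2$) produces the full table of $\pi_0$; tracing its orbit from $\infty$ then visits all $2p+1$ vertices. The reverse-arc case ($\alpha$ odd) follows symmetrically because the analogous permutation $\sigma\tau^{-1} = \sigma\pi\sigma^{-1}$ is conjugate to $\pi$ and hence also a single $(2p+1)$-cycle.

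The main obstacle is the orbit-trace in the arc case. After computing the exceptional values $\pi_0(\infty) = 2$, $\pi_0(0) = \infty$, $\pi_0(p-1) = p+2$, $\pi_0(p) = p+1$, $\pi_0(2p-2) = 0$, $\pi_0(2p-1) = 1$ from the Walecki formulas, one must check that the orbit of $\infty$ exhausts $V(T)$; this orbit naturally splits into a ``$+2$ walk on the evens up to $p$ (or $p-1$)'', a crossover near $p$, a ``$+2$ walk on the odds'', and a return via $2p-2 \to 0 \to \infty$, with two sub-cases according to the parity of $p$. The calculation is completely elementary but is the most delicate part of the bookkeeping.
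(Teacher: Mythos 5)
Your proof is correct and follows the same skeleton as the paper's: for each $\alpha$ the neighbourhood is the union of two sets of size $|S|$ (successors and predecessors on one odd cycle for edge types, successors on two cycles for arc types), so one only needs to rule out equality, and both you and the paper reduce the arc case to a single representative via the translation $v \mapsto v+1$. Where you genuinely diverge is in how equality is excluded. For edge types the paper argues that the successor set equalling the predecessor set makes $S$ closed under $v \mapsto v+2$ along an odd cycle, hence $S = V(T)$; your run-decomposition identity $|N^\alpha(S)| = (|S|-s)+(2k-g)$ reaches the same conclusion by a clean parity count. For arc types the paper performs an element-by-element chase through six observations (i)--(vi) specific to $C_0$ and $C_1$; you instead package the whole obstruction into the single permutation $\pi = \tau^{-1}\sigma$, observe that equality forces $S$ to be $\pi$-invariant, and show $\pi$ is a full $(2p+1)$-cycle. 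This is the structural content of the paper's case analysis made explicit, and it buys a cleaner conjugation argument for the reverse-arc case (the paper just says ``similar to Case 2''). Two small points to tidy up: the lemma as stated needs $S \neq \emptyset$ (which you note, and the paper glosses over), and your table of exceptional values of $\pi_0$ has overlapping entries when $p=2$, since then $2p-2 = p$ and the entry $\pi_0(2p-2)=0$ must yield to $\pi_0(p)=p+1$; a direct check confirms $\pi_0$ is still a $5$-cycle in that case, so the conclusion stands.
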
 
	
	\begin{proof}
		We divide the proof into three parts depending on the value of $\alpha$.

		\medskip
		
	 \textit{Case~1:} 
			If $\alpha \in \{ 2n+1, 2n+2, \ldots 2n+m \}$, then
			assume that $C_{\alpha-1}$ is the cycle $v_1v_2 \ldots v_{2p+1}v_1$
			and the set $S$ consist of vertices 
			$v_{i_1}, v_{i_2}, \ldots, v_{i_l}$ where 
			$i_1 < i_2 < \ldots < i_l$. 
			Now notice that the set of vertices
			$A = \{v_{i_1+1}, v_{i_2+1}, \ldots, v_{i_l+1}\}$  are distinct and are contained in $N^{\alpha}(S)$. On the other hand, note that the set of vertices 
			$B= \{v_{i_1-1}, v_{i_2-1}, \ldots, v_{i_l-1}\}$  are distinct and are contained in $N^{\alpha}(S)$. As $|A| = |B| = |S|$, we are done unless $A= B$. 
			
			If $A = B$, then note that $v_{t} \in S$ implies $v_{t+2} \in S$ where the $+$ operation on indices of $v$ is taken modulo $2p+1$. Hence, if there exists some index $t$ for which we have $v_t, v_{t+1} \in S$, then $S$ must be the whole vertex set, 
			which is not possible. Thus, we must have $v_{i_{j+1}} = v_{i_j+2}$ for all $j \in \{1, 2, \ldots, l\}$ where
			the $+$ operation on indices of $v$ is taken modulo $2p+1$.
			However as $C_{\alpha}$ is an odd cycle on $2p+1$ vertices, 
			it is impossible to satisfy the above condition. Hence $A \neq B$, and we are done in  this case.

		\medskip	
			
		\textit{Case 2:} If $\alpha \in \{2, 4, \ldots, 2n\}$, then 
			observe that $S$ has exactly $|S|$ many $\alpha$-neighbors in 
			$C_{\alpha-2}$ and exactly $|S|$ many $\alpha$-neighbors in 
			$C_{\alpha-1}$. Furthermore, assume that $A$ and $B$ are the sets of $\alpha$-neighbors of $S$ in $C_{\alpha-2}$ and $C_{\alpha-1}$, respectively. As $|A| = |B| = |S|$ and 
			$A \cup B \subseteq N^{\alpha}(S)$, we are done unless $A= B$. 
			As the 
   pairs of the cycles  $C_{\alpha-2}$ and $C_{\alpha-1}$ can be obtained by translation from $C_0$ and $C_1$, respectively, it is enough to prove our claim for 
			$\alpha = 2$. 

   First we will make some observations assuming $A = B$. 
   Using those observations, we want to show that if $S$ is non-empty, then $S = V(T)$. 
   For convenience, let 
   $L=\{1,2,\ldots,p-2\}$ and 
   $U=\{p+1,p+2,\ldots,2p-1\}$.

   \begin{enumerate}[(i)]
    \item 
       $0 \in S \Leftrightarrow 1 \in A \Leftrightarrow 1 \in B \Leftrightarrow \infty \in S$. 

        \item 
       $\infty \in S \Leftrightarrow 0 \in A \Leftrightarrow 0 \in B \Leftrightarrow 2 \in S$. 
       
       \item 
       $p-1 \in S \Leftrightarrow p+1 \in A \Leftrightarrow p+1 \in B \Leftrightarrow p+2 \in S.$ 
        \item 
       $p \in S \Leftrightarrow \infty \in A  \Leftrightarrow \infty \in B \Leftrightarrow p+1 \in S.$ 
       
       \item For $j \in L$,
       $j \in S \Leftrightarrow 2p-j \in A \Leftrightarrow 2p-j \in B \Leftrightarrow j+2 \in S.$  

        \item For $j \in U$,
       $j \in S \Leftrightarrow 2p+1-j \in A  \Leftrightarrow 2p+1-j \in B \Leftrightarrow j+2 \in S.$      
   \end{enumerate}

   If $S$ contains any odd number from $L$, then it contains all odd numbers from $L$ due to (v). 
   Moreover, it contains the odd number from the set $\{p-1,p\}$ by (v), which, in turn by (iii) and (iv), implies that $S$ contains some even number from $U$. Hence by (vi), $S$ contains all even numbers from $U$. 
    Therefore, in particular, we have $2p-2 \in S$ which implies that $0 \in S$. Thus, by (i) and (ii), we have $\infty, 2 \in S$. 
   Notice that, $2 \in S$ implies that all even numbers of $L$ must belong to $S$ by (v). Thus, the even number from the set $\{p-1,p\}$ belongs to $S$ as well, by (v). This, due to (iii) and (iv), implies that some odd number of $U$ belongs to $S$, which, due to (vi), implies that all odd numbers in $U$ belong to $S$. That means, if $S$ contains any odd number from $L$, then $S = V(T)$.

   If $S$ contains any even number from $L$, then it contains all even numbers from $L$ due to (v), and  the even number from the set $\{p-1,p\}$ due to (iii) and (iv). This, due to (vi), implies that $S$ contains all odd numbers from $U$. In particular, $2p-1 \in S$, which implies $1 \in S$ by (vi). 
   Thus, by the above paragraph we have $S = V(T)$ in this case. 
   That means, if $S$ contains any number from $L$, 
   then $S = V(T)$.

   If $S$ contains any number from $U$, then by (vi) $S$ contains at least one number among $p+1$ and $p+2$. Thus, by (iii) and (iv), $S$ must contain one number among  $p-1$ and $p$. Then due to (v),  $S$ must contain some number from $L$, and therefore,   
   $S = V(T)$.
   Hence, if $S$ contains any number from $L \cup U$,
   then $S = V(T)$. 

   If $S$ contains any number from the set $\{0, \infty, p-1, p\}$, then due to (i)-(iv), $S$ must contain some number from $L \cup U$. 
   Therefore, no matter what, if $S$ is non-empty, then $S = V(T)$.

\medskip

	\textit{Case~3:} If $\alpha \in \{1, 3, \ldots, 2n-1\}$, then the case is similar to \textit{Case~2}.

		\medskip

		Therefore, $|S| <  |N^{\alpha}(S)|$ for all $\alpha \in \{1, 2, \ldots 2n+m\}$ and for every $S \subsetneq V(K_{2p+1})$.
	\end{proof}

	Let $T$ be the complete $(n,m)$-graph on $2p+1$ vertices defined after the statement of Theorem~\ref{th mad}, which satisfies the condition of Lemma~\ref{lem key1}. We want to show that $G \to T$ whenever 
	$mad(G)  < 2 + \frac{2}{4p-1}$. 
	That is, it is enough to prove the following lemma. 
	
	\begin{lemma}\label{lem main}
		If $mad(G)  < 2 + \frac{2}{4p-1}$, then  $G \to T$. 
	\end{lemma}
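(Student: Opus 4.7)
My plan is to argue by contradiction via a minimum counterexample, combined with a discharging argument driven by Lemma~\ref{lem key1}. Let $G$ be a vertex-minimum graph with $mad(G)<2+\frac{2}{4p-1}$ and $G\not\to T$; by minimality every proper subgraph of $G$ admits a homomorphism to $T$. I will rule out three reducible configurations and then use discharging to force $\frac{2|E(G)|}{|V(G)|}\ge 2+\frac{2}{4p-1}$, contradicting the hypothesis.

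The configurations I aim to exclude are: (R1) a vertex of degree at most $1$; (R2) a thread $u_0 v_1\cdots v_\ell u_1$ with $\ell\ge 2p-1$ degree-$2$ inner vertices; and (R3) a degree-$3$ vertex $u$ whose three incident \emph{generalized threads} (a direct edge to a higher-degree neighbor counted as a thread of length $k_i=0$) satisfy $k_1+k_2+k_3\ge 4p-3$. (R1) is immediate since every $\alpha$-neighborhood in $T$ is nonempty. For (R2), let $f'$ be the inherited homomorphism of $G-\{v_1,\ldots,v_\ell\}$ and define forward-reachable sets $S_i$ from $f'(u_0)$ and backward-reachable sets $T_i$ from $f'(u_1)$ along the prescribed types of the thread. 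Repeated application of Lemma~\ref{lem key1} gives $|S_i|\ge \min(i+1,2p+1)$ and $|T_i|\ge \min(\ell-i+2,2p+1)$; taking $i=p$ yields $|S_p|+|T_p|\ge 2p+2>|V(T)|$, so $S_p\cap T_p\ne\emptyset$ and the thread extension exists. For (R3), delete $u$ together with all degree-$2$ vertices on its three incident threads; the set $U^{(i)}$ of admissible values of $f(u)$ coming from the $i$-th thread satisfies $|U^{(i)}|\ge \min(k_i+2,2p+1)$ by the same reachability bound, so by complement counting $|U^{(1)}\cap U^{(2)}\cap U^{(3)}|\ge \sum_i|U^{(i)}|-2|V(T)|\ge (4p+3)-(4p+2)=1$, giving a valid choice of $f(u)$.

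With (R1)--(R3) forbidden, assign each vertex initial charge $\mu(v)=d(v)$ and let every vertex of degree at least $3$ send $\frac{1}{4p-1}$ to each degree-$2$ vertex on each incident thread. Each degree-$2$ vertex then receives $\frac{2}{4p-1}$ and ends with charge $2+\frac{2}{4p-1}$; a vertex of degree $d\ge 4$ loses at most $\frac{d(2p-2)}{4p-1}$ (by (R2)) and retains $\frac{d(2p+1)}{4p-1}\ge 2+\frac{2}{4p-1}$; and a degree-$3$ vertex loses at most $\frac{4p-4}{4p-1}$ by (R3), retaining $\frac{8p+1}{4p-1}>2+\frac{2}{4p-1}$. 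Summing over $V(G)$ yields $2|E(G)|\ge (2+\frac{2}{4p-1})|V(G)|$, the desired contradiction.

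The principal obstacle is cleanly handling the degenerate variants of these reductions: a thread whose two endpoints coincide (forming a cycle through a single higher-degree vertex), components of $G$ consisting entirely of degree-$2$ vertices (hence lacking any higher-degree anchor), and collisions $f'(w_i)=f'(w_j)$ among the far endpoints of the three threads in (R3). Each demands a small adaptation of the same forward/backward reachability computation---typically a pigeonhole bound against $|V(T)|$---and one must verify that the ``generalized thread'' convention absorbs these boundary cases without double-counting in the discharging.
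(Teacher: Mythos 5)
Your proposal follows the paper's argument almost exactly: a vertex-minimum counterexample, the same reducible configurations (no vertex of degree at most one, no chain with at least $2p-1$ internal degree-two vertices, and a cap on the total number of internal chain vertices around a branch vertex), and the identical discharging rule in which each vertex of degree at least three sends $\frac{1}{4p-1}$ to every degree-two vertex on its incident chains. The one genuine difference is that the paper proves the branch-vertex reduction (its configuration $C_l$, reducible when $\sum_i k_i > (2p-1)l-2p$) for every degree $l$ and uses it uniformly to get final charge $\frac{2p(d-3)}{4p-1}\ge 0$, whereas you prove it only for degree $3$ and handle $d\ge 4$ using nothing but the per-chain cap $k_i\le 2p-2$. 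Your arithmetic ($\frac{d(2p+1)}{4p-1}\ge\frac{8p}{4p-1}$ for $d\ge 4$, and $\frac{8p+1}{4p-1}>\frac{8p}{4p-1}$ for $d=3$) checks out, and your reachability bounds $|S_i|\ge\min(i+1,2p+1)$ are exactly what Lemma~\ref{lem key1} delivers, so this leaner variant is legitimate.

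However, the case you defer as a ``small adaptation''---a component all of whose vertices have degree two, i.e., a cycle component---is a genuine gap, not a formality. There the discharging collapses: no vertex sends charge, every vertex retains charge $2<2+\frac{2}{4p-1}$, and the final inequality fails. You would need to prove separately that every $(n,m)$-cycle maps to $T$; the forward/backward pigeonhole argument only yields this for cycles of length roughly $2p$ or more, and for short cycles the claim is actually false: for $(n,m)=(0,2)$ each edge-type class of $T$ is a triangle-free $5$-cycle, so a triangle whose three edges all carry the same type admits no homomorphism to $T$ even though its maximum average degree is $2$. So no pigeonhole adaptation will absorb this case; one would have to add a girth-type hypothesis or treat such components by a different construction. (To be fair, the paper's own proof has the identical hole---its assertion that every degree-two vertex of $M$ is an internal vertex of a chain fails for cycle components---so your write-up is no less complete than the published one; but the obstacle you flagged is substantive, and the same caveat applies to the pendant-cycle degeneracy where a chain's two endpoints coincide.)
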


	We will prove the above lemma by contradiction. Hence we assume a minimal 
	(with respect to number of vertices) $(n,m)$-graph $M$
	having $mad(M)  < 2 + \frac{2}{4p-1}$ which does not admit a homomorphism to $T$. 
	We now give some forbidden configurations for $M$ stated as lemmas.

	\begin{lemma}\label{lem deg one}
		The graph $M$ does not contain a vertex having degree one.
	\end{lemma}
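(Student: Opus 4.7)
The plan is to argue by contradiction using the minimality of $M$. Suppose, for contradiction, that $v$ is a degree-one vertex in $M$, and let $u$ be its unique neighbor. The adjacency between $u$ and $v$ has some type, which I encode (uniformly for arcs, reverse arcs, and edges) by the value $\alpha \in \{1, 2, \ldots, 2n+m\}$ such that $v \in N^{\alpha}(u)$.

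First, I would consider the $(n,m)$-graph $M' = M - v$. Since every subgraph of $M'$ is also a subgraph of $M$, we have $mad(M') \leq mad(M) < 2 + \frac{2}{4p-1}$, and clearly $|V(M')| < |V(M)|$. By the minimality of $M$ as a counterexample to Lemma~\ref{lem main}, there exists a homomorphism $f : M' \to T$.

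Next, I would extend $f$ to $v$. Apply Lemma~\ref{lem key1} with the singleton $S = \{f(u)\}$, which is a proper subset of $V(T)$ since $|V(T)| = 2p+1 \geq 3$. The lemma yields $|N^{\alpha}(f(u))| > |S| = 1$, so in particular $N^{\alpha}(f(u)) \neq \emptyset$. Pick any $w \in N^{\alpha}(f(u))$ and set $f(v) = w$. Because $v$ has no other neighbor in $M$, the extended map remains a homomorphism, and now it is defined on all of $V(M)$, contradicting the assumption that $M \not\to T$.

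The main obstacle is essentially bookkeeping rather than mathematical difficulty: one must be careful that the single index $\alpha$ simultaneously encodes the type \emph{and} the direction of the link $uv$, so that mapping $v$ to an $\alpha$-neighbor of $f(u)$ in $T$ really does preserve arc orientation and edge/arc color. This is handled automatically by the paper's convention in which a reverse arc of type $\alpha$ is recorded as an arc of type $\alpha - 1$, allowing arcs and edges to be treated uniformly via the $N^{\alpha}$ notation. No discharging argument or more delicate structural reduction is needed for this particular forbidden configuration; the heavy lifting is packaged inside Lemma~\ref{lem key1}.
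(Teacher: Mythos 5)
Your proposal is correct and follows essentially the same route as the paper: delete the degree-one vertex, invoke minimality to get a homomorphism of the smaller graph to $T$, and extend it to the deleted vertex because $N^{\alpha}(f(u))$ is nonempty for every $\alpha$. The only cosmetic difference is that you derive the nonemptiness from Lemma~\ref{lem key1} applied to a singleton, whereas the paper cites directly that every vertex of $T$ has exactly two $\alpha$-neighbors for each $\alpha$; both justifications are valid.
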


	\begin{proof}
		Suppose $M$ contains a vertex $u$ having degree one. Observe that the graph $M'$, 
		obtained by deleting the vertex $u$ from $M$, admits a homomorphism to $T$ due to the minimality of $M$. It is possible to extend the homomorphism $M' \to T$ to a homomorphism of $M \to T$ as any vertex of $T$ has exactly two $\alpha$-neighbors for all $\alpha \in \{1,2, \ldots 2n+m\}$. 
	\end{proof}
	
	A path with all internal vertices of degree two is called a \textit{chain}, and in particular  a \textit{$k$-chain} is a chain having $k$ internal vertices. 
	The endpoints (assume them to always have degree at least $3$)
	of a ($k$-)chain are called \textit{($k$-)chain adjacent}. 
	
% 	\begin{figure}[ht!]
% 		\begin{center}
% 			\begin{tikzpicture}
% 				\node at (0,-0.4) {$u$};
% 				\node at (1,-0.4) {};
% 				\node at (5,-0.4) {};
% 				\node at (6,-0.4) {$v$};
% 				\draw[fill] (0,0) circle[radius=0.1];
% 				\draw[fill] (1,0) circle[radius=0.1];
% 				\draw[fill] (5,0) circle[radius=0.1];
% 				\draw[fill] (6,0) circle[radius=0.1];
% 				\draw[fill] (-1,1) circle[radius=0.1];
% 				\draw[fill] (-1,-1) circle[radius=0.1];
% 				\draw[fill] (7.4,0) circle[radius=0.1];
% 				\draw[fill] (7,1) circle[radius=0.1];
% 				\draw[fill] (7,-1) circle[radius=0.1];
% 				\draw[black] (0,0)--(1,0);
% 				\draw[black] (0,0) -- (-1,1);
% 				\draw[black] (0,0) -- (-1,-1);
% 				\draw[ultra thick] (1,0) -- (5,0) ;
% 				\draw[black] (5,0) -- (6,0);
% 				\draw[black] (6,0) -- (7.4,0);
% 				\draw[black] (6,0) -- (7,1);
% 				\draw[black] (6,0) -- (7,-1);
% 				\draw [decorate,decoration={brace,amplitude=5pt,mirror,raise=4ex}]
% 				(1,0) -- (5,0) node[midway,yshift=-3em]{ $k$ - degree $2$ vertices};

% 			\end{tikzpicture}
% 		\end{center}
% 		\caption{The vertices $u$ and $v$ are $k$-chain adjacent.}
% 	\end{figure}

	\begin{lemma}\label{lem no big chain}
		The graph $M$ does not contain a $k$-chain with $k \geq 2p-1$.
	\end{lemma}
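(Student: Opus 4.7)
The strategy is to proceed by contradiction, shortening the $k$-chain and extending a homomorphism of the shorter graph along the chain. Suppose $M$ contains a $k$-chain with $k \geq 2p-1$, whose internal vertices are $w_1, w_2, \ldots, w_k$ and whose endpoints are $u$ and $v$; set $w_0 = u$ and $w_{k+1} = v$, and let $\alpha_i \in \{1, 2, \ldots, 2n+m\}$ denote the adjacency type of the pair $w_{i-1} w_i$ in $M$. Let $M' = M - \{w_1, \ldots, w_k\}$. Since every subgraph of $M'$ is a subgraph of $M$, we have $mad(M') \leq mad(M) < 2 + \frac{2}{4p-1}$, and $|V(M')| < |V(M)|$, so by minimality there is a homomorphism $\phi \colon M' \to T$.

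To extend $\phi$, I want to assign images in $T$ to $w_1, \ldots, w_k$ compatibly with the types $\alpha_1, \ldots, \alpha_{k+1}$. Define a sequence of subsets of $V(T)$ by $S_0 = \{\phi(u)\}$ and $S_i = N^{\alpha_i}(S_{i-1})$ for $i = 1, 2, \ldots, k+1$. A straightforward induction shows $y \in S_i$ if and only if there is a walk $\phi(u) = x_0, x_1, \ldots, x_i = y$ in $T$ with $x_j$ an $\alpha_j$-neighbor of $x_{j-1}$ for each $j$. Hence, if $\phi(v) \in S_{k+1}$, tracing back such a realizing walk gives a valid assignment $\phi(w_i) = x_i$, extending $\phi$ to a homomorphism $M \to T$ and contradicting the assumption $M \not\to T$.

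The crux is therefore to verify that $S_{k+1} = V(T)$. Since $|S_0| = 1 < 2p+1 = |V(T)|$, Lemma~\ref{lem key1} gives $|S_i| > |S_{i-1}|$ whenever $S_{i-1}$ is a proper subset of $V(T)$; consequently, after at most $2p$ steps we must reach $S_j = V(T)$ for some $j \leq 2p$. Once $S_j = V(T)$, the fact that each vertex of $T$ has exactly two $\alpha$-neighbors for every type $\alpha$ forces $N^{\alpha_{j+1}}(V(T)) = V(T)$, so the sets remain full thereafter. As $k+1 \geq 2p$, we obtain $S_{k+1} = V(T) \ni \phi(v)$, completing the extension. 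The substantive work is essentially absorbed by Lemma~\ref{lem key1}; the only nontrivial point to check is that the growth-by-one argument really applies at every step until saturation, which is direct from Lemma~\ref{lem key1} combined with the symmetry observation $N^{\alpha}(V(T))=V(T)$.
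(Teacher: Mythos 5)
Your proof is correct and follows exactly the paper's approach: delete the internal vertices of the chain, invoke minimality of $M$ to get a homomorphism of the smaller graph, and extend it along the chain using the strict expansion property of Lemma~\ref{lem key1} to show the reachable set saturates to $V(T)$ within $2p$ steps. The paper states this extension in one sentence; your write-up simply supplies the details it leaves implicit.
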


	\begin{proof}
		Suppose $M$ contains a $k$-chain, where $k \geq 2p-1$, with endpoints $u,v$. Observe that the graph $M'$, 
		obtained by deleting the internal degree two vertices from the above mentioned chain from $M$, admits a homomorphism to $T$ due to the minimality of $M$. It is possible to extend the homomorphism $M' \to T$ to a homomorphism of $M \to T$ by Lemma~\ref{lem key1}. 
	\end{proof}

	Let us describe another configuration. 
	Suppose $v$ is chain adjacent to exactly 
	$l$ vertices $v_1, v_2, \ldots, v_l$, each having degree at least three. 
	Let 
	the chains between $v$ and $v_i$ has $k_i$ internal vertices. 
Let us refer to such a configuration as  configuration $C_l$ for convenience.

% 	\begin{figure}[ht!]
% 		\centering
% 		\begin{tikzpicture}[scale=1.0]
% 			\node at (0.4,0.2) {$v$};
% 			\node at (0.3,2.1) {$v_1$};
% 			\node at (1.9,1.5) {$v_2$};
% 			\node at (2.4,-0.1) {$v_3$};
% 			\node at (0.66,-2.4) {};
% 			\node at (1.6,-4.2) {};
% 			\node at (-1.6,-4.2) {};
			
% 			\node at (-1.9,1.3) {$v_l$};
% 			\node at (-5.1,4.4) {};
% 			\node at (-5.6,1.5) {};
% 			\node at (-5.5,-2.9) {};
			
% 			\draw[fill] (0,0) circle[radius=0.1]; %v
% 			\draw[fill] (0,2) circle[radius=0.1]; %v_1
			
% 			\draw[fill] (1.5,1.5) circle[radius=0.1]; %v_2
% 			\draw[fill] (2,0) circle[radius=0.1]; %v_3
% 			\draw[fill] (1.5,-1.5) circle[radius=0.1]; %v_4
% 			\draw[fill] (0,-2) circle[radius=0.1]; %v_5
% 			\draw[fill] (-1.5,-1.5) circle[radius=0.1]; %v_6
% 			\draw[fill] (-2,0) circle[radius=0.1]; %v_7
% 			\draw[fill] (-1.5,1.5) circle[radius=0.1]; %v_l
% 			\draw[fill] (2,0) circle[radius=0.1]; %u_l

% 			\draw[ultra thick] (0,0) -- (0,2);
% 			\draw[ultra thick] (0,0) -- (1.5,1.5);
% 			\draw[ultra thick] (0,0) -- (2,0);
			
% 			\draw[ultra thick] (0,0) -- (1.5,-1.5);
% 			\draw[ultra thick] (0,0) -- (0,-2);
% 			\draw[ultra thick] (0,0) -- (-1.5,-1.5);
% 			\draw[ultra thick] (0,0) -- (-2,0);
% 			\draw[ultra thick] (0,0) -- (-1.5,1.5);
% 			\draw[dashed] (-2,0)--(-1.5,1.5);
% 			\draw [decorate,decoration={brace,amplitude=5pt,mirror,raise=0.4ex}]
% 			(0,0) -- (2,0) node[midway,xshift=3em,yshift=-1em]{$k_3$ internal vertices};
% 		\end{tikzpicture}
% 		\caption{Forbidden Configuration $C_l$ }
% 	\end{figure}
	
	\begin{lemma}\label{lem Csl}
		The graph $M$ does not contain the configuration $C_{l}$ as a induced subgraph if
		$$\sum_{i=1}^{l} k_i > (2p-1)l - 2p$$ where $p = (2n+m)$. 
	\end{lemma}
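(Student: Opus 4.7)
The plan is to argue by contradiction, exploiting the minimality of $M$. Assuming $M$ contains the configuration $C_l$ with $\sum_{i=1}^l k_i > (2p-1)l - 2p$, I would delete the central vertex $v$ together with all the internal degree-two vertices of the chains leading to each $v_i$, obtaining a proper subgraph $M'$. Since $mad$ is monotone under subgraph inclusion, $M'$ still satisfies $mad(M') < 2 + \frac{2}{4p-1}$, so the minimality of $M$ yields a homomorphism $f \colon M' \to T$. The task then reduces to extending $f$ over $v$ and the internal chain vertices.

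For each chain $i$, I would encode it as a sequence of adjacency types $\alpha^{(i)}_1, \ldots, \alpha^{(i)}_{k_i+1} \in \{1, 2, \ldots, 2n+m\}$, read from $v_i$ toward $v$ and recording both edge labels and arc orientations. Starting with $S^{(i)}_0 = \{f(v_i)\}$ and defining inductively $S^{(i)}_j := N^{\alpha^{(i)}_j}(S^{(i)}_{j-1})$, Lemma~\ref{lem key1} guarantees $|S^{(i)}_j| \geq |S^{(i)}_{j-1}| + 1$ as long as $S^{(i)}_{j-1} \subsetneq V(T)$. Since Lemma~\ref{lem no big chain} forces $k_i \leq 2p-2$, we have $k_i + 2 \leq 2p < |V(T)|$, so the expansion never saturates within the $k_i+1$ steps, and the reachable set $R_i := S^{(i)}_{k_i+1}$ satisfies $|R_i| \geq k_i + 2$, equivalently $|V(T) \setminus R_i| \leq 2p - k_i - 1$. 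An extension of $f$ along chain $i$ exists precisely when $f(v) \in R_i$.

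Summing the forbidden values over all $l$ chains, the number of vertices of $T$ that cannot serve as $f(v)$ is at most
\[
\sum_{i=1}^{l}(2p - k_i - 1) \;=\; (2p-1)l - \sum_{i=1}^{l} k_i \;<\; 2p \;<\; |V(T)|,
\]
where the strict inequality uses the hypothesis. At least one admissible color therefore remains for $f(v)$; after fixing such a color, one walks backwards along each chain, selecting at each step a preimage witness supplied by the construction of $R_i$, to obtain consistent images for all internal chain vertices. This produces a homomorphism $M \to T$, contradicting the choice of $M$. The main delicate point I anticipate is checking that the $+1$ expansion from Lemma~\ref{lem key1} applies uniformly irrespective of how arcs, reverse arcs, and edges interleave along a chain, and that the chain-length bound from Lemma~\ref{lem no big chain} genuinely prevents any $S^{(i)}_j$ from exhausting $V(T)$ before step $k_i+1$; once these are in hand, the counting is immediate.
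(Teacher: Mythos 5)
Your proposal is correct and follows essentially the same route as the paper: delete $v$ and the internal chain vertices, invoke minimality to get $f\colon M' \to T$, use Lemma~\ref{lem key1} to show each chain forbids at most $2p-k_i-1$ values at $v$, and conclude by the same count $\sum_{i}(2p-k_i-1) = (2p-1)l - \sum_i k_i < 2p < |V(T)|$. Your iterated-neighborhood formulation $S^{(i)}_j = N^{\alpha^{(i)}_j}(S^{(i)}_{j-1})$ just makes explicit the expansion step that the paper states more tersely (and the saturation worry is harmless anyway, since $S^{(i)}_j = V(T)$ would only mean the chain forbids nothing).
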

	
	\begin{proof}
		Suppose $M$ contains the configuration $C_{l}$. Let $M'$ be the graph obtained by deleting all vertices of the configuration except $v_1, v_2, \ldots, v_l$. 
		Thus there exists a homomorphism  $f: M' \to T$ due to minimality of $M$. We are going to extend $f$ to a homomorphism $f_{ext}: M \to T$, which will lead to a contradiction and complete the proof.

		As $T$ has exactly two $\alpha$-neighbors for all $\alpha \in \{1, 2, \ldots 2n+m\}$, 
		Lemma~\ref{lem key1} implies that it is possible to partially extend $f$ to
		the chain between $v$ and $v_i$ in such different ways that will allow us to 
		choose the value of $f_{ext}(v_i)$ from a set of $k_{i}+2$ vertices of $T$. In other words, the value of $f(v_{i})$ forbids at most 
		$2p-k_{i}-1$ values at $v_i$.

		Thus, considering the effects all the chains incident to $v$, 
		at most 
		$$2lp-\sum_{i=1}^lk_{i}-l$$
		values are forbidden at $v$. 
		
		Notice that if this value is less than or equal to $2p$, then it will be possible to extend $f$ to a homomorphism $f_{ext} : M \to T$. That implies the relation $\sum_{i=1}^lk_{i} \leq (2p-1)l - 2p$. 
	\end{proof}

	Now we are ready to start the discharging procedure. 
	First we define a charge function on the vertices of $M$. 
	$$ch(x) = \text{deg}(x) - \left(2 + \frac{2}{4p-1}\right), \text{ for all } x \in V(M).$$ 
	Observe that, $\sum_{x \in V(M)}ch(x) < 0$ as  
	$mad(M) < 2 + \frac{2}{4p-1}$. 
	Now after the completion of the discharging procedure, all updated charge will become non-negative
	implying a contradiction. 
	The discharging rule is the following:
	
	\bigskip
	
	\noindent $(R1):$ \textit{Every vertex having degree three or more donates 
		$\frac{1}{4p-1}$ to the degree two vertices which are part of its incident chains.}

	\bigskip
	
	Let $ch^*(x)$ be the updated charge. Now we are going to calculate this values of this updated charge for vertices of different degrees in $M$.

	\begin{lemma}
		For any degree two vertex $x \in V(M)$, we have $ch^*(x) = 0$.  
	\end{lemma}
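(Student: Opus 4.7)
The approach is a direct computation using the discharging rule $(R1)$. First I would observe that the initial charge of a degree two vertex $x$ is
$$ch(x) \;=\; 2 - \left(2 + \frac{2}{4p-1}\right) \;=\; -\frac{2}{4p-1},$$
so it is enough to show that $x$ receives exactly $\frac{2}{4p-1}$ of charge from the rule. Since no charge flows to $x$ except via $(R1)$, I only need to identify which vertices donate to $x$ and count them.

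Next I would argue that every degree two vertex of $M$ is an internal vertex of some chain whose two endpoints have degree at least three. By Lemma~\ref{lem deg one}, $M$ has no degree one vertex, so walking away from $x$ through degree two vertices in either direction must eventually terminate at a vertex of degree at least three, unless the walk closes into a cycle consisting entirely of degree two vertices. That last case cannot occur: a connected component consisting only of degree two vertices is a cycle, which admits a homomorphism to the complete $(n,m)$-graph $T$ described after Theorem~\ref{th mad} (since $T$ contains cycles of every length as it contains a complete $(n,m)$-graph on $2p+1 \geq 3$ vertices), contradicting the minimality of $M$. Thus $x$ lies strictly between two well-defined endpoints $u,v$ of the chain, each of degree at least three.

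Finally I would apply $(R1)$ to conclude. The chain containing $x$ is incident to $u$ at one end and to $v$ at the other end, so each of $u$ and $v$ donates $\frac{1}{4p-1}$ to $x$. No other vertex of degree at least three is incident to this chain, so the total charge received at $x$ is $\frac{2}{4p-1}$, giving
$$ch^*(x) \;=\; -\frac{2}{4p-1} + \frac{2}{4p-1} \;=\; 0.$$
The only bookkeeping subtlety is the degenerate possibility $u = v$, in which case the single vertex is incident to the chain through two distinct ends and contributes $\frac{1}{4p-1}$ twice under the natural reading of $(R1)$; the count is unchanged. I expect this to be the only point worth remarking on, as the rest of the argument is a one-line computation.
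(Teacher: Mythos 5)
Your core argument coincides with the paper's: $ch(x)=2-\bigl(2+\frac{2}{4p-1}\bigr)=-\frac{2}{4p-1}$, the vertex $x$ is an internal vertex of a chain because $M$ has no degree one vertex (Lemma~\ref{lem deg one}), and rule $(R1)$ sends it $\frac{1}{4p-1}$ from each end of that chain, so $ch^*(x)=0$. The paper's proof is exactly this computation and nothing more.

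Where you go beyond the paper is in ruling out the case that $x$ lies on a component all of whose vertices have degree two, i.e.\ a cycle. You are right that this case must be excluded for the lemma to hold (such an $x$ would receive no charge and $ch^*(x)=ch(x)<0$), and the paper passes over it silently. But your justification is not sound: the fact that $und(T)=K_{2p+1}$ contains cycles of every length does not give a homomorphism from an arbitrary $(n,m)$-cycle to $T$, because the arc and edge types must be preserved. Concretely, for $(n,m)=(0,2)$ the type-$1$ edges of $T$ form a single $5$-cycle, so a triangle whose edges all have type $1$ (a graph of maximum average degree $2$, hence within the hypothesis of Lemma~\ref{lem main}) does not map to $T$. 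The repair available with the paper's tools is Lemma~\ref{lem key1}, used exactly as in the proof of Lemma~\ref{lem no big chain}: fix the image of one vertex of the cycle and extend around it, which shows that every $(n,m)$-cycle of length at least $2p$ maps to $T$ and hence cannot be the minimal counterexample $M$; short cycles are a genuine obstruction to Lemma~\ref{lem main} as stated rather than something this particular lemma's proof can absorb. So state the exclusion of the cycle case as an assumption inherited from the surrounding argument (or cite the chain-extension argument for long cycles) instead of the "cycles of every length" claim.
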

	
	\begin{proof}
		As $M$ does not have any degree one vertex due to Lemma~\ref{lem deg one}, every degree two vertex $x$ must be internal vertex of a chain. Thus, by rule $(R1)$ the vertex $x$ must receive $\frac{1}{4p-1}$ charge from each side of the chain. 
		Hence the updated charge is 
		$$ch^*(x) = ch(x) + \frac{2}{4p-1} = deg(x)-2 - \frac{2}{4p-1} + \frac{2}{4p-1} = 0. $$
		Thus we are done. 
	\end{proof}

	\begin{lemma}
		For any vertex $x$ having degree three or more, we have $ch^*(x) \geq 0$.  
	\end{lemma}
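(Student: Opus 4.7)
The plan is to compute $ch^*(x)$ explicitly for a vertex $x$ of degree $d \geq 3$ and show that the forbidden configurations from Lemmas~\ref{lem no big chain} and~\ref{lem Csl} force the total donation to be small enough to keep the updated charge non-negative. Let $k_1, k_2, \ldots, k_d$ denote the numbers of internal degree-two vertices on the $d$ chains incident to $x$ (a chain of length zero corresponds to an edge from $x$ directly to another vertex of degree at least three). By rule $(R1)$, $x$ sends $\frac{1}{4p-1}$ to every internal degree-two vertex of every incident chain, for a total outgoing charge of $\frac{1}{4p-1}\sum_{i=1}^{d} k_i$. Consequently,
$$ch^*(x) \;=\; d - 2 - \frac{2}{4p-1} - \frac{\sum_{i=1}^{d} k_i}{4p-1} \;=\; \frac{(4p-1)(d-2) - 2 - \sum_{i=1}^{d} k_i}{4p-1},$$
so the claim reduces to proving the single inequality $\sum_{i=1}^{d} k_i \leq (4p-1)(d-2) - 2$.

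In the generic case, where $x$ is chain-adjacent to $d$ distinct vertices of degree at least three, Lemma~\ref{lem Csl} with $l = d$ gives $\sum_{i=1}^{d} k_i \leq (2p-1)d - 2p$. The claim then reduces to the elementary arithmetic verification
$$(4p-1)(d-2) - 2 - \bigl[(2p-1)d - 2p\bigr] \;=\; 2p(d-3) \;\geq\; 0,$$
which holds because $d \geq 3$, with equality precisely at $d = 3$. This pins down the tight case and explains why the threshold $2 + \frac{2}{4p-1}$ in the hypothesis of Lemma~\ref{lem main} is the right one.

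The main obstacle is the non-generic situation in which two or more chains incident to $x$ terminate at the same other endpoint, so $x$ is chain-adjacent to strictly fewer than $d$ distinct vertices and Lemma~\ref{lem Csl} does not apply directly. I would handle this by noting that a pair of parallel chains between $x$ and a common neighbor $y$ creates a short closed configuration (a cycle through $x$ and $y$), whose existence in the minimal counterexample $M$ can be ruled out or sharply restricted via the minimality argument combined with Lemma~\ref{lem no big chain} applied to the resulting cycle, exactly as in the proof of Lemma~\ref{lem no big chain}. One then either re-derives the $C_l$ bound in this multi-chain configuration or verifies the inequality $\sum k_i \leq (4p-1)(d-2) - 2$ directly by a small case analysis, with the $d = 3$ case (the tight one) receiving the most attention.
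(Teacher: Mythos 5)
Your argument coincides with the paper's: it applies Lemma~\ref{lem Csl} with $l=d$ to get $\sum_{i=1}^{d}k_i \leq (2p-1)d-2p$ and then verifies $ch^*(x) \geq \frac{2p(d-3)}{4p-1} \geq 0$ for $d \geq 3$, exactly the computation you carry out. The parallel-chain degeneracy you raise in your last paragraph is silently ignored by the paper (it assumes each vertex of degree $d\geq 3$ has $d$ distinct chain-neighbours); your caution is legitimate, and the cleanest fix is to observe that the forbidden-value count in the proof of Lemma~\ref{lem Csl} is per chain rather than per neighbour, so the same bound holds even when two chains share an endpoint.
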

	
	\begin{proof}
		Let $x$ be a degree $d$ vertex of $M$. Thus by Lemma~\ref{lem Csl} 
		\begin{align*}
			ch^*(x) &\geq  ch(x) - \frac{(2p-1)d-2p}{4p-1} = d-2 - \frac{2}{4p-1} -\frac{2pd-d-2p}{4p-1}\\
			&= \frac{4pd -8p -d +2 - 2 - 2pd+d+2p}{4p-1}=\frac{2p(d-3)}{4p-1} \geq 0
		\end{align*}
		for $d \geq 3$. 
	\end{proof}

	\bigskip
	
	\noindent \textit{Proofs of Lemma~\ref{lem main} Theorem~\ref{th mad}.} Observe that, due to the above lemmas, we have $0 > \sum_{x \in V(M)} ch(x) = \sum_{x \in V(M)} ch^*(x) \geq 0$, a contradiction.   
	Thus, the proof of Lemma~\ref{lem main} is completed. Lemma~\ref{lem main} directly implies Theorem~\ref{th mad}. \qed

	\bigskip

	As a corollary to this result, we obtain $\chi_{n,m}(\mathcal{P}_g) = 2 (2n+m) + 1$ for all 
	$g \geq 8(2n+m)$.

	\begin{corollary}\label{cor sparse graph}
		Let $\mathcal{P}_g$ denote the family of planar graphs having girth at least $g$. Then for all $g \geq 8(2n+m)$ we have $ \chi_{n,m}(\mathcal{P}_g) = 2 (2n+m) + 1 $.
	\end{corollary}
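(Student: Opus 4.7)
The plan is to reduce the corollary to Theorem~\ref{th mad} by bounding the maximum average degree of a planar graph using its girth, and then to argue the matching lower bound via forests.

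For the upper bound, I would first recall the standard Euler-formula consequence: if $H$ is a planar graph with girth at least $g$, then $|E(H)| \leq \frac{g}{g-2}(|V(H)|-2)$, and this bound is hereditary since any subgraph of a graph of girth $\geq g$ still has girth $\geq g$. Therefore, for any planar graph $G \in \mathcal{P}_g$,
\begin{equation*}
mad(G) \;<\; \frac{2g}{g-2} \;=\; 2 + \frac{4}{g-2}.
\end{equation*}
Setting $p = 2n+m$, I would then verify the elementary inequality $\frac{4}{g-2} \leq \frac{2}{4p-1}$, which is equivalent to $g \geq 8p$. Thus, whenever $g \geq 8(2n+m)$, every $G \in \mathcal{P}_g$ satisfies $mad(G) < 2 + \frac{2}{4(2n+m)-1}$, and Theorem~\ref{th mad} yields $\chi_{n,m}(G) \leq 2(2n+m)+1$.

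For the matching lower bound, I would invoke the fact stated in the introduction that $\chi_{n,m}(\mathcal{T}_1) = 2(2n+m)+1$ for the family of forests (which is the known value referenced after Theorem~\ref{th partial 2-tree known}). Since every tree is planar and has infinite girth, we have $\mathcal{T}_1 \subseteq \mathcal{P}_g$ for every $g$, and hence $\chi_{n,m}(\mathcal{P}_g) \geq \chi_{n,m}(\mathcal{T}_1) = 2(2n+m)+1$.

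Combining both inequalities gives the equality claimed in the corollary. The only non-routine step here is really the algebraic translation between the girth threshold $8(2n+m)$ and the maximum-average-degree threshold in Theorem~\ref{th mad}; once this is done, the result is immediate, so I do not anticipate a genuine obstacle beyond cleanly presenting the Euler-formula computation and citing the forest value for the lower bound.
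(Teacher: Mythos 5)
Your upper bound argument is correct and is essentially the paper's own: bound $mad(G)$ by $\frac{2g}{g-2}$ via Euler's formula (the paper cites Borodin for exactly this), check that $\frac{4}{g-2}\leq\frac{2}{4p-1}$ is equivalent to $g\geq 8p$, and invoke Theorem~\ref{th mad}. That part is fine.

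The lower bound, however, rests on a false premise. It is not true that $\chi_{n,m}(\mathcal{T}_1)=2(2n+m)+1$; the paper never states this value, and it fails already for $(n,m)=(1,0)$: every oriented forest admits a homomorphism to the directed triangle (greedily, since each of its vertices has both an in- and an out-neighbour), so $\chi_{1,0}(\mathcal{T}_1)=3$, not $5$. In general a forest maps to any $(n,m)$-graph in which every vertex has at least one $\alpha$-neighbour for each $\alpha\in\{1,\dots,2n+m\}$, and such targets exist on roughly $2n+m+1$ or $2n+m+2$ vertices, far below $2(2n+m)+1$. So forests cannot certify the lower bound. The matching lower bound for $\mathcal{P}_g$ has to come from genuinely high-girth planar constructions: one needs, for every $g$, a planar graph of girth at least $g$ whose $(n,m)$-chromatic number is at least $2(2n+m)+1$ (equivalently, one shows that no $(n,m)$-graph on $2(2n+m)$ vertices can serve as a target for arbitrarily long chains, using the failure of the expansion property of Lemma~\ref{lem key1} on small targets). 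This is the content of the lower bound in Theorem~\ref{th mixed-planar high girth} due to Montejano, Pinlou, Raspaud and Sopena, and it is what the equality in Theorem~\ref{th mad} implicitly relies on. You should replace the forest argument by a citation of (or a proof along the lines of) that construction.
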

	
	\begin{proof}
	    	From the Theorem~\ref{th mad} along with the result by Borodin~\cite{borodin1999maximum} that shows that a planar graph $G$ having girth at least $g$ has $mad(G) < \frac{2g}{g-2}$, the following result is obtained as a corollary of Theorem~\ref{th mad}. 
	\end{proof}

 A corollary of the corollary shows that the circular chromatic number of a planar graph having girth at least $8g$
 is at most $2+ \frac{1}{g}$. 
	
	\begin{corollary}\label{cor circular chi}
		Let $G$ be a planar graph with girth $f(g)$.
If $f(g) \geq 8g$, then the circular chromatic number of $G$ is at most 
$2+\frac{1}{g}$.   
	\end{corollary}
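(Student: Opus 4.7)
The plan is to reduce Corollary~\ref{cor circular chi} to Corollary~\ref{cor sparse graph} by specialising to the case $(n,m)=(0,g)$ and by exploiting the explicit structure of the target graph $T$ built via Walecki's decomposition just before Lemma~\ref{lem key1}. Recall the classical equivalence (due to Vince) that $\chi_c(G)\le\frac{2g+1}{g}=2+\frac{1}{g}$ if and only if $G$ admits a graph homomorphism to the odd cycle $C_{2g+1}$, so the task reduces to producing such a homomorphism.

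First, I would dispose of the degenerate case $g=1$ separately, where the conclusion is $\chi_c(G)\le 3$, i.e.\ $3$-colourability of a planar graph with girth at least $8$; this is immediate from Gr\"otzsch's theorem (triangle-free planar graphs are $3$-chromatic). So I focus on $g\ge 2$.

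For $g\ge 2$, set $(n,m)=(0,g)$, so that $2n+m=g\ge 2$ and we stay outside the excluded case $(0,1)$. Given a planar graph $G$ with girth at least $8g=8(2n+m)$, turn $G$ into a $(0,g)$-graph $G^{*}$ by assigning every edge the same type $2n+1=1$. By Corollary~\ref{cor sparse graph}, $G^{*}$ admits a homomorphism $f:G^{*}\to T$, where $T$ is the complete $(0,g)$-graph on $2g+1$ vertices constructed from the Walecki decomposition $C_0,C_1,\dots,C_{g-1}$ of $K_{2g+1}$.

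The crucial observation is that, by construction, the type-$1$ edges of $T$ form precisely the Hamiltonian cycle $C_0$ of length $2g+1$, i.e.\ a copy of $C_{2g+1}$ sitting inside $T$. Since $f$ preserves edge type and every edge of $G^{*}$ has type $1$, every edge of $G$ is mapped by $f$ into this Hamiltonian cycle. Therefore $f$ is at the same time a graph homomorphism $G\to C_{2g+1}$, which yields $\chi_c(G)\le 2+\frac{1}{g}$.

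The main conceptual (rather than technical) obstacle is simply to notice this packaging: that each colour class of the target $T$ used in the proof of Theorem~\ref{th mad} is itself a $C_{2g+1}$, so the mono-chromatic instance of the $(0,g)$-framework automatically encodes a homomorphism to the odd cycle. Beyond this observation, no additional calculation is required.
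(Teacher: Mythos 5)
Your proposal is correct and follows essentially the same route as the paper: invoke the equivalence between circular $(2+\frac{1}{g})$-colourability and homomorphism to $C_{2g+1}$, view $G$ as a $(0,g)$-graph with all edges of type $1$, apply Corollary~\ref{cor sparse graph}, and observe that the type-$1$ edges of $T$ form the Hamiltonian cycle $C_0\cong C_{2g+1}$. Your separate treatment of $g=1$ via Gr\"otzsch's theorem is a small extra care the paper omits (its blanket exclusion of $(n,m)=(0,1)$ makes this case formally outside the scope of Corollary~\ref{cor sparse graph}), but otherwise the two arguments coincide.
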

	
	\begin{proof}
	    	We know that~\cite{BORODIN2004147} a 
      graph $G$ admits a 
      circular $(2 + \frac{1}{g})$-coloring if and only if 
      $G$ admits a homomorphism to the cycle $C_{2g+1}$ on $2g+1$ vertices.

      Let $G$ be a planar graph having girth at least 
      $f(g) \geq 8g$. Convert $G$ into a $(0,g)$-graph by replacing its edges with edges of type $1$. According to Corollary~\ref{cor sparse graph}, $G$ admits a homomorphism to $T$, the complete $(0,g)$-graph on $2g+1$ vertices defined after the statement of Theorem~\ref{th mad}. However, as all the edges of $G$ are of type $1$, the homomorphism will still be valid if we delete all types of edges except for the ones labeled with $1$ from $T$. However, in that case $T$ can be replaced by a cycle on $2g+1$ vertices having only type $1$ edges.      
	\end{proof}

	\section{Partial $2$-trees}\label{sec partial 2 trees}
The only solutions of $2n+m=2$ are $(n,m)=(1,0)$ and $(0,2)$. Similarly, the only two solutions of 
$2n+m=3$ are $(n,m)=(1,1)$ and $(0,3)$. Therefore, studying the $(n,m)$-chromatic number for these values of $(n,m)$ can help understand the trend for the general values of $(n,m)$.

Recall that, for $2n+m = 2$ case, it is known that the lower bounds are tight~\cite{montejano2010homomorphisms,sopena1997chromatic}. 
The best known bounds when $2n+m=3$ are 
$13 \leq \chi_{0,3}(\mathcal{T}_{2}) \leq 27$ and 
$13 \leq \chi_{1,1}(\mathcal{T}_{2}) \leq 21$~\cite{montejano2010homomorphisms}. 
So, if the trend of lower bound in Theorem~\ref{th partial 2-tree known} being tight were true, then in particular it would be true for the cases when $2n+m=3$ as well. However, we show the contrary via the following result where we improve the lower and the upper bounds for both instances\footnote{The upper bound $\chi_{1,1}(\mathcal{T}_{2}) \leq 21$ was reported in~\cite{montejano2010homomorphisms} without a proof.}.

	\begin{theorem}\label{th partial 2 tree}
		For the family of $\mathcal{T}_2$ of partial $2$-trees we have, 
		\begin{itemize}
			\item[(i)] $14 \leq \chi_{0,3}(\mathcal{T}_{2}) \leq 15$,
			\item[(ii)] $14 \leq \chi_{1,1}(\mathcal{T}_{2}) \leq 21$.
		\end{itemize}
	\end{theorem}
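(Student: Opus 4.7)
The four bounds in Theorem~\ref{th partial 2 tree} split into two upper-bound constructions and two lower-bound witnesses, which I address in turn.

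For the upper bound $\chi_{0,3}(\mathcal{T}_2) \leq 15$, the plan is to exhibit an explicit $(0,3)$-graph $T_{15}$ on $15$ vertices satisfying the \emph{good-pair property}: for every edge $ab$ of $T_{15}$ and every pair $(\alpha,\beta) \in \{1,2,3\}^2$ of edge types, the set $N^\alpha(a) \cap N^\beta(b)$ is non-empty. Every partial $2$-tree embeds as a spanning subgraph of some $2$-tree, and every $2$-tree admits a standard recursive construction starting from an edge and repeatedly attaching a new vertex of degree two to the endpoints of an existing edge. Given the good-pair property, a homomorphism $f : G \to T_{15}$ is built inductively: the initial edge of $G$ maps to any edge of $T_{15}$ of the matching type, and each newly added vertex $v$ attached to existing vertices $u,w$ via types $\alpha,\beta$ is assigned to any element of $N^\alpha(f(u)) \cap N^\beta(f(w))$. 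A natural candidate is a Cayley $(0,3)$-graph on $\mathbb{Z}_5 \times \mathbb{Z}_3$ whose three edge-type classes are determined by symmetric connection sets chosen so that all nine common-neighborhood conditions hold; vertex-transitivity reduces the check to $14 \cdot 9 = 126$ local conditions. The bound $\chi_{1,1}(\mathcal{T}_2) \leq 21$ follows by the same strategy applied to a Cayley $(1,1)$-target $T_{21}$ on $21$ vertices, using one antisymmetric connection set for the arcs and one symmetric set for the edges, with the good-pair property now quantified over the four adjacency types (two arc directions plus the single edge type).

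For the lower bounds $\chi_{0,3}(\mathcal{T}_2) \geq 14$ and $\chi_{1,1}(\mathcal{T}_2) \geq 14$, the plan is to construct a specific partial $2$-tree $G_0$ admitting no homomorphism to any $(n,m)$-graph on $13$ vertices. A target $H$ that accepts homomorphisms from every partial $2$-tree must itself satisfy the good-pair property on every edge (as witnessed by a triangle gadget with prescribed edge-type pattern attached to a universal-fan construction that anchors the images of the triangle's base vertices), so it suffices to show that no $(n,m)$-graph on $13$ vertices with $2n+m = 3$ can satisfy the good-pair property simultaneously across all edges and all type pairs. The argument is a double count on the quantity $\sum_{c \in V(H)} \sum_{\alpha} |N^\alpha(c)|^2$, lower-bounded by the number of good-pair constraints to be met and upper-bounded in terms of the degree sequence of $H$, refined locally at high-degree vertices because the margin at $|V(H)| = 13$ is tight. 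The main obstacle will be constructing the explicit Cayley targets $T_{15}$ and $T_{21}$, which requires selecting connection sets generic enough to realize all common-neighborhood conditions while remaining compatible with the group structure and best verified via automorphism orbits; a secondary obstacle is the lower bound, where exploiting the precise local structure of three-type $(n,m)$-graphs on thirteen vertices is needed to push the previously known bound from $13$ to $14$.
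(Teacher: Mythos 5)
Your upper-bound plan coincides with the paper's: the ``good-pair property'' is exactly the paper's property $P_{2,1}$, the inductive extension along the $2$-tree construction is the standard argument, and the paper's targets live on $\mathbb{Z}/5\mathbb{Z}\times\mathbb{Z}/3\mathbb{Z}$ and $\mathbb{Z}/7\mathbb{Z}\times\mathbb{Z}/3\mathbb{Z}$, close to the Cayley-type objects you propose (though the $15$-vertex target's edge colours depend on $j+j'$ and on quadratic residuosity of $i'-i$, so it is not literally a Cayley colouring; the construction itself is the real content and you have only promised it). One caution on the reduction: a universal target need not satisfy $P_{2,1}$ on \emph{every} edge --- an edge never used as the image of an edge escapes your gadget. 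The paper repairs this by first proving that a \emph{minimal} universal bound on exactly $\chi_{n,m}(\mathcal{T}_2)$ vertices exists (using that $\mathcal{T}_2$ is closed under disjoint unions) and that minimality forces $P_{2,1}$ everywhere.

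The genuine gap is in your lower bound. The double count you propose --- comparing $\sum_{c}\sum_{\alpha}|N^{\alpha}(c)|^{2}$ (or codegrees) against the number of good-pair constraints --- is essentially the argument that yields the previously known bound $13=(2n+m)^2+(2n+m)+1$: property $P_{2,1}$ forces $|N^{\alpha}(u)|\ge (2n+m)+1=4$ for each of the three types, hence degree at least $12$, hence at least $13$ vertices; and at $13$ vertices every global counting inequality (e.g.\ each edge needs $9$ common neighbours, while $11$ are available) is comfortably satisfiable, so no refinement ``in terms of the degree sequence'' can reach a contradiction. What actually pushes the bound to $14$ is the \emph{rigidity} at $13$ vertices: all the inequalities above are tight, so $und(T)=K_{13}$, each $|N^{\alpha}(u)|$ equals exactly $4$, and each $N^{\alpha}(u)\cap N^{\beta}(v)$ is a singleton; this forbids the configuration $x,y,z\in N^{\alpha}(u)$ with $x,z$ both $\gamma$-neighbours of $y$. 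The paper then locates a triangle $u,v,w$ all of whose edges have type $3$ (forced by $P_{2,1}$) and shows, using the forbidden configuration, that its common neighbours must realise all $12$ admissible type-triples $(\alpha,\beta,\gamma)$ in pairwise disjoint nonempty classes, giving at least $12+3=15$ vertices --- a contradiction with $13$. Without this structural step your plan does not improve on $13$, and the phrase ``refined locally at high-degree vertices'' does not identify the needed mechanism.
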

	
	The proof of Theorem~\ref{th partial 2 tree} is contained in a series of lemmas. 

\medskip

Let $\mathcal{F}$ be a family of graphs. Let $T$ be an $(n,m)$-graph satisfying $G \to T$ for all $G$ 
having $und(G) \in \mathcal{F}$. Such a $T$ is an \textit{$(n,m)$-universal bound} of $\mathcal{F}$. 
Moreover, if $T$ does not contain any $(n,m)$-universal bound as a proper subgraph, then $T$ is a \textit{minimal $(n,m)$-universal bound} of $\mathcal{F}$. A family $\mathcal{F}$ of graphs is \textit{complete} if given any finite collection $G_1, G_2, \ldots, G_k \in \mathcal{F}$, there exists a $G \in \mathcal{F}$ containing all $G_i$'s as vertex disjoint subgraphs.  
	
\begin{lemma}
	Let $\mathcal{F}$ be a complete family of graphs. Then there exists a minimal $(n,m)$-universal bound on $\mathcal{F}$ on $\chi_{n,m}(\mathcal{F})$ vertices.
\end{lemma}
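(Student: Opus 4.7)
Let $k = \chi_{n,m}(\mathcal{F})$. My plan is first to produce an $(n,m)$-universal bound of $\mathcal{F}$ on exactly $k$ vertices, and then to prune it to a minimal one without reducing the vertex count.

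For the existence step, I will fix a vertex set $V = \{v_1, \ldots, v_k\}$ and consider the finite collection $\mathcal{H}$ of all $(n,m)$-graphs on $V$. I plan to argue by contradiction: assume that no element of $\mathcal{H}$ is a universal bound of $\mathcal{F}$, so for each $T \in \mathcal{H}$ I can select a witness $G_T$ with $und(G_T) \in \mathcal{F}$ and $G_T \not\to T$. Applying completeness of $\mathcal{F}$ to the finite collection $\{und(G_T) : T \in \mathcal{H}\}$ will yield a graph $G^\star \in \mathcal{F}$ containing each $und(G_T)$ as a vertex-disjoint subgraph. I will then construct an $(n,m)$-graph $\widetilde{G}^\star$ with $und(\widetilde{G}^\star) = G^\star$ by copying the arc and edge types from each $G_T$ onto its embedded copy of $und(G_T)$ inside $G^\star$, and assigning arbitrary types to the leftover arcs and edges. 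By construction, each $G_T$ sits inside $\widetilde{G}^\star$ as a labeled subgraph preserving both direction and type. Because $\chi_{n,m}(\widetilde{G}^\star) \leq \chi_{n,m}(\mathcal{F}) = k$, there is a homomorphism $f : \widetilde{G}^\star \to H$ for some $H \in \mathcal{H}$; restricting $f$ to the embedded copy of $G_H$ gives $G_H \to H$, the desired contradiction.

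Once a universal bound $T_0 \in \mathcal{H}$ on $k$ vertices is in hand, the minimality step will be straightforward. If $T_0$ is not already minimal, pass to a proper subgraph $T_1 \subsetneq T_0$ that is still a universal bound. Any such $T_1$ must satisfy $|V(T_1)| \geq k$ (else $\chi_{n,m}(\mathcal{F}) < k$), so $V(T_1) = V(T_0)$ and $T_1$ differs from $T_0$ only by missing some arcs or edges. Iterating this terminates after at most $|A(T_0)| + |E(T_0)|$ steps and produces the desired minimal $(n,m)$-universal bound on $k$ vertices. The main obstacle will be the contradiction step, and specifically the labeled construction of $\widetilde{G}^\star$: the vertex-disjointness supplied by completeness is precisely what allows the many, potentially conflicting, type assignments inherited from the different $G_T$'s to coexist in a single well-defined $(n,m)$-graph, so if completeness were weakened to allow shared vertices, the argument would break.
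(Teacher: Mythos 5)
Your proposal is correct and follows essentially the same route as the paper: assume no $(n,m)$-graph on $\chi_{n,m}(\mathcal{F})$ vertices is a universal bound, pick a witness $G_T$ for each candidate $T$, use completeness to pack all witnesses vertex-disjointly into a single host in $\mathcal{F}$, and derive a contradiction by restricting a homomorphism of the host. Your write-up is in fact somewhat more careful than the paper's, which leaves implicit both the labeled construction of the host $(n,m)$-graph and the final pruning to a minimal universal bound without losing vertices.
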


\begin{proof}
		Assume the contrary. That is, for each $(n,m)$-graph $T$ on $\chi_{n,m}(\mathcal{F})$ vertices, there exists an $(n,m)$-graph $G_T$ which does not admit a homomorphism to $T$, where $und(G_T) \in \mathcal{F}$.

		Since $\mathcal{F}$ is a complete family of graphs, there exists an $(n,m)$-graph $G$ with $und(G) \in \mathcal{F}$ that contains every such $G_{T}$ as its subgraph.  As $G \in \mathcal{F}$, there is a homomorphism 
		$f : G \to \hat{T}$  for some $(n,m)$-graph $\hat{T}$ on 
		$\chi_{n,m}(\mathcal{F})$ vertices. 
		Then the restriction 
		$$f|_{V(G_{\hat{T}})} : V(G_{\hat{T}}) \to V(\hat{T})$$ 
  is a homomorphism of $G_{\hat{T}}$ to $\hat{T}$. This is a contradiction. 
\end{proof}

An $(n,m)$-graph $T$ has \textit{property $P_{2,1}$} if for any adjacent pair of vertices $u,v$ of $T$ the set $N^{\alpha}(u) \cap N^{\beta}(v)$ is not empty for all $\alpha, \beta \in \{1,2, \ldots, 2n+m\}$. 
It is known~\cite{montejano2010homomorphisms} that if an $(n,m)$-graph has property $P_{2,1}$, then $T$ is an $(n,m)$-universal bound of $\mathcal{T}_2$. We prove the converse for minimal $(n,m)$-universal bound of $\mathcal{T}_2$.

	\begin{lemma}\label{lem partial2-trees}	
	Let $\mathcal{T}_2$ be the family of partial $2$-trees. If $T$ is a minimal $(n,m)$-universal bound of $\mathcal{T}_2$, then $T$ has property $P_{2,1}$.
	\end{lemma}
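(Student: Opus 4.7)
The plan is to prove the statement by contrapositive: assume that $T$ fails to have property $P_{2,1}$, and construct a proper subgraph $T' \subsetneq T$ that is still an $(n,m)$-universal bound of $\mathcal{T}_2$, contradicting the assumed minimality of $T$. Concretely, if $T$ fails $P_{2,1}$, there exist adjacent vertices $u,v$ in $T$ (joined by an arc or edge of some type $\gamma_0$) together with labels $\alpha,\beta \in \{1,2,\ldots,2n+m\}$ such that $N^\alpha(u) \cap N^\beta(v) = \emptyset$. I would then let $T'$ be the $(n,m)$-graph obtained from $T$ by deleting only that single $\gamma_0$-adjacency between $u$ and $v$; this is a proper subgraph on the same vertex set.

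To show that $T'$ is still a universal bound of $\mathcal{T}_2$, take an arbitrary $(n,m)$-graph $G$ with $und(G) \in \mathcal{T}_2$; the goal is to produce a homomorphism $G \to T'$. I would augment $G$ to a larger $(n,m)$-graph $G^*$ whose underlying graph is still a partial $2$-tree, designed so that no homomorphism $G^* \to T$ can send a $\gamma_0$-adjacency of $G$ onto the pair $\{u,v\}$. For every pair $x,y$ of vertices of $G$ that is joined by an arc or edge of type $\gamma_0$, attach a new vertex $z_{xy}$ with $z_{xy} \in N^\alpha(x) \cap N^\beta(y)$; when $\gamma_0$ is an edge type, attach in addition a second new vertex $z'_{xy}$ with $z'_{xy} \in N^\beta(x) \cap N^\alpha(y)$ in order to block the reverse orientation as well. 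Each newly added vertex has only its two prescribed neighbors, which are already adjacent in $G$, so iterating these attachments preserves the partial-$2$-tree property and keeps the underlying graph simple; each desired adjacency type can be realised by the convention for arcs, reverse arcs, and edges.

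Since $und(G^*) \in \mathcal{T}_2$ and $T$ is an $(n,m)$-universal bound, there exists a homomorphism $f : G^* \to T$. For any $\gamma_0$-adjacency $xy$ of $G$: if $\gamma_0$ is an arc type and $f$ sent the arc $xy$ to the arc $uv$, then $f(z_{xy}) \in N^\alpha(u) \cap N^\beta(v) = \emptyset$, a contradiction; if $\gamma_0$ is an edge type, the auxiliary vertex $z_{xy}$ rules out the case $(f(x),f(y)) = (u,v)$ exactly as above, while $z'_{xy}$ rules out $(f(x),f(y)) = (v,u)$. Hence the restriction $f|_{V(G)}$ is a homomorphism of $G$ into the $(n,m)$-graph obtained from $T$ by deleting the $\gamma_0$-adjacency $uv$, i.e.\ into $T'$. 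This shows that $T'$ is an $(n,m)$-universal bound of $\mathcal{T}_2$, contradicting the minimality of $T$.

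The main subtlety will be the edge case: because edges are unoriented, a single auxiliary vertex $z_{xy}$ only forbids one of the two possible labellings $(f(x),f(y)) = (u,v)$ or $(v,u)$, so both $z_{xy}$ and $z'_{xy}$ must be attached simultaneously with the types swapped in order to block both orientations. The remaining points — that iterated attachments of degree-$2$ vertices to existing adjacencies preserve partial-$2$-tree structure and simple underlyings, and that the required one-sided neighbourhood relations can always be instantiated for any $\alpha,\beta \in \{1,\dots,2n+m\}$ — are routine verifications from the conventions fixed early in the paper.
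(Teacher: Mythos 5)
Your proof is correct, and it reaches the conclusion by a genuinely different logical route than the paper, even though the central gadget (attaching a new degree-$2$ vertex $w$ to an adjacent pair $x,y$ with $w \in N^{\alpha}(x) \cap N^{\beta}(y)$, which preserves membership in $\mathcal{T}_2$) is the same. The paper argues in the forward direction: it first uses minimality of $T$ to produce a single witness graph $G$ with $und(G) \in \mathcal{T}_2$ such that \emph{every} homomorphism $G \to T$ hits every vertex and every adjacency of $T$ (a step that implicitly relies on $\mathcal{T}_2$ being a complete family, as in the preceding lemma), then attaches all $(2n+m)^2$ gadgets to every adjacent pair of $G$ and reads off property $P_{2,1}$ from a homomorphism of the augmented graph. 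You instead argue by contrapositive: a single failure $N^{\alpha}(u) \cap N^{\beta}(v) = \emptyset$ lets you delete the offending adjacency $uv$ and show the resulting proper subgraph $T'$ is still universal, by attaching only the relevant gadget(s) to the $\gamma_0$-adjacencies of an \emph{arbitrary} $G$ and observing that no homomorphism of the augmented graph can map such an adjacency onto $\{u,v\}$. This buys you a more self-contained argument — you never need the saturating witness graph or the completeness of the family — at the cost of the orientation case analysis you correctly identify (one blocking vertex suffices for an arc type since the underlying graph of $T$ is simple, while an edge type needs both $z_{xy}$ and $z'_{xy}$). Both proofs are sound; yours is arguably the cleaner deduction of minimality $\Rightarrow$ $P_{2,1}$, while the paper's witness-graph formulation is reused later (e.g.\ in Lemma~\ref{lem 4alpha}), so it earns its keep there.
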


	\begin{proof}
As $T$ is a minimal $(n,m)$-universal bound of $\mathcal{T}_2$, there exists an $(n,m)$-graph 
		 $G$ with $und(G) \in \mathcal{T}_2$ such that for any homomorphism 
		 $f : G \to T$ the following conditions are satisfied:
   \begin{itemize}
       \item For every $x \in V(T)$, there exists a $u \in V(G)$ such that $f(u) = x$.
       
       \item For every $xy \in A(T) \cup E(T)$, there exists $uv \in A(G) \cup E(G)$ such that $f(u) = x, f(v) = y$.
   \end{itemize}

		For any two adjacent vertices $u,v$ in $G$ and for any $(\alpha, \beta) \in \{1,2, \ldots, 2n+m\}^2$, add a new vertex $w_{\alpha, \beta}$ adjacent to $u$ and $v$ so 
		that we have $w_{\alpha, \beta} \in N^{\alpha}(u) \cap N^{\beta}(v)$. Let the so-obtained $(n,m)$-graph be $G^*$. 
		Observe that, $und(G^*) \in \mathcal{T}_2$ by construction. 
		Therefore, $G^*$ admits a homomorphism to $T$.

		For any pair $x,y$ of adjacent vertices in $T$ and for any homomorphism $f^* : G^* \to T$, there exists $u,v$ in $G$ satisfying 
		$f(u)=x$ and $f(v)=y$. 
		Note that the newly added common neighbors of $u,v$ are connected by a 
		special $2$-path via either $u$ or $v$. Thus, the images of $u, v$ and their newly added common neighbors must be distinct under $f^*$. As $f^*$ is any homomorphism of $G^*$ to $T$, and as 
		 $x,y$ is any pair of adjacent vertices in $T$, 
		 $T$ is forced to have property $P_{2,1}$. 
	\end{proof}
	
	The above lemma implies a necessary and sufficient  condition useful for computing the $(n,m)$-chromatic number of partial $2$-trees.

	\begin{corollary}\label{cor partial 2-tree iff P21}
	   We have $\chi_{n,m}(\mathcal{T}_2) = t$ if and only if there exists 
	   an $(n,m)$-graph $T$ on $t$ vertices with property $P_{2,1}$. 
	\end{corollary}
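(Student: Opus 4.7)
The plan is to deduce the corollary as a direct packaging of Lemma~\ref{lem partial2-trees} together with the preceding lemma on complete families and the cited result of Montejano et al.\ that property $P_{2,1}$ already suffices for an $(n,m)$-graph to be a universal bound of $\mathcal{T}_2$. The mild preliminary step is to observe that $\mathcal{T}_2$ is a complete family in the sense defined above: given any finite collection $G_1, G_2, \dots, G_k$ of partial $2$-trees, their disjoint union has treewidth equal to $\max_i \mathrm{tw}(G_i) \leq 2$, hence is itself a partial $2$-tree containing each $G_i$ as a vertex-disjoint subgraph.

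For the forward direction, suppose $\chi_{n,m}(\mathcal{T}_2) = t$. Since $\mathcal{T}_2$ is complete, the previous lemma furnishes a minimal $(n,m)$-universal bound $T$ of $\mathcal{T}_2$ on exactly $t$ vertices, and then Lemma~\ref{lem partial2-trees} forces this $T$ to have property $P_{2,1}$, producing the required target graph.

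For the reverse direction, suppose some $(n,m)$-graph $T$ on $t$ vertices has property $P_{2,1}$. Invoking the result of Montejano et al.\ mentioned just before Lemma~\ref{lem partial2-trees}, $T$ is itself an $(n,m)$-universal bound of $\mathcal{T}_2$, so every $(n,m)$-graph whose underlying graph lies in $\mathcal{T}_2$ maps homomorphically into $T$, yielding $\chi_{n,m}(\mathcal{T}_2) \leq t$. The displayed biconditional is therefore to be read as pinning down $\chi_{n,m}(\mathcal{T}_2)$ as the smallest $t$ admitting such a $T$: the forward direction prevents any smaller $t$ from working, and the reverse direction ensures attainment at $t = \chi_{n,m}(\mathcal{T}_2)$.

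I do not foresee a substantive obstacle, since all the heavy lifting already sits in the two preceding lemmas. The only point requiring care is the reading of the ``$=t$'' in the statement: strictly speaking, the reverse direction on its own only yields $\chi_{n,m}(\mathcal{T}_2) \leq t$, and equality in the biconditional arises by combining this with the minimality supplied by the forward direction, an interpretation which should be stated explicitly in the written-up proof.
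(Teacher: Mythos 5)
Your proposal is correct and matches the paper's (implicit) justification exactly: the forward direction packages the completeness of $\mathcal{T}_2$, the existence of a minimal universal bound on $\chi_{n,m}(\mathcal{T}_2)$ vertices, and Lemma~\ref{lem partial2-trees}, while the reverse direction invokes the cited fact that property $P_{2,1}$ makes $T$ a universal bound. Your caveat about the literal reading of ``$=t$'' is well taken --- the corollary is indeed to be understood as characterizing $\chi_{n,m}(\mathcal{T}_2)$ as the least such $t$, which is exactly how the paper uses it afterwards.
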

	   
In light of the above corollary, if one can show that there does not exist any 
$(n,m)$-graph on $t$ vertices with property $P_{2,1}$, then it will imply that 
$\chi_{n,m}(\mathcal{T}_2) \geq t+1$. We will use this observation to prove our lower bounds.

		\begin{lemma}\label{lem partial2trees2}
		If $T$ is a minimal $(n,m)$-universal bound of $\mathcal{T}_{2}$ on $(2n+m)^2 + (2n+m) + 1$ vertices, then every vertex $v$ in $T$ has exactly $(2n+m)+1$ many $\alpha$-neighbors for all $\alpha \in \{1,2, \ldots, 2n+m\}$.
	\end{lemma}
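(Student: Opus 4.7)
My plan is to fix an arbitrary vertex $v$ of $T$ and apply a double-counting argument to the types of neighborhoods of $v$, exploiting property $P_{2,1}$ guaranteed by Lemma~\ref{lem partial2-trees}. Let $p = 2n+m$, write $d_\alpha := |N^\alpha(v)|$, and let $D := \sum_{\alpha=1}^{p} d_\alpha$ denote the total degree of $v$ in $T$.

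Fix $\alpha \in \{1, \ldots, p\}$ and count the set of triples $(u, w, \beta)$ with $u \in N(v)$, $\beta \in \{1, \ldots, p\}$, and $w \in N^\beta(u) \cap N^\alpha(v)$. Property $P_{2,1}$ applied to the adjacent pair $(u,v)$ with the pair of types $(\beta, \alpha)$ guarantees at least one such $w$ for each of the $D \cdot p$ choices of $(u, \beta)$. Conversely, each $w \in N^\alpha(v)$ contributes $|N(v) \cap N(w)|$ triples (since the type $\beta$ of the adjacency $uw$ is determined by $u$), and because $w$ lies in $N(v)$ but not in $N(w)$, this count is at most $D - 1$. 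Hence
\[
D \cdot p \;\leq\; d_\alpha (D - 1).
\]

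Assuming $D \geq 2$, this rearranges to $d_\alpha \geq p + \frac{p}{D-1} > p$, and since $d_\alpha$ is an integer, $d_\alpha \geq p+1$ for every $\alpha$. Summing over $\alpha$ yields $D \geq p(p+1) = p^2 + p$; combined with the trivial upper bound $D \leq |V(T)| - 1 = p^2 + p$, we obtain $D = p^2 + p$ and $d_\alpha = p + 1$ for every $\alpha$, as desired.

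I expect the only delicate step to be ruling out the degenerate cases $D \in \{0,1\}$. If $D = 0$, then $v$ is isolated and can be removed to produce a proper subgraph of $T$ that is still an $(n,m)$-universal bound of $\mathcal{T}_2$ (isolated vertices of any source graph are free to be relocated), contradicting minimality. If $D = 1$, then property $P_{2,1}$ applied at the unique neighbor of $v$ already forces $N^\alpha(v) \neq \emptyset$ for every $\alpha$, giving $D \geq p \geq 2$, a contradiction. With these edge cases dispatched, the counting argument closes the proof cleanly.
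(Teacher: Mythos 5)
Your proof is correct, but it reaches the key inequality $|N^{\alpha}(v)| \geq p+1$ (where $p = 2n+m$) by a different mechanism than the paper. The paper fixes a vertex $u$ together with a single $\alpha$-neighbor $v$ of $u$, and observes that the $p$ sets $N^{\alpha}(u) \cap N^{\beta}(v)$ for $\beta \in \{1,\ldots,p\}$ are pairwise disjoint (a common neighbor has exactly one adjacency type with $v$), each non-empty by property $P_{2,1}$, and none of them contains $v$ itself; together with $v$ this exhibits $p+1$ explicit distinct $\alpha$-neighbors of $u$. You instead aggregate over all neighbors $u$ of $v$ and all types $\beta$ and extract the bound from the averaging inequality $Dp \leq d_{\alpha}(D-1)$, where the strict excess over $p$ comes from the $-1$ (i.e., from $w \notin N(w)$) rather than from counting $v$ itself as an extra $\alpha$-neighbor. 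Both arguments then finish identically: sum over $\alpha$ and compare with $|V(T)| - 1 = p^2 + p$ to force equality everywhere. Your route costs a little more bookkeeping, including the explicit treatment of $D \in \{0,1\}$, which you dispatch correctly via minimality and property $P_{2,1}$; this actually makes explicit a non-degeneracy point (that $T$ has no isolated vertex) which the paper's opening assertion that ``each vertex of $T$ has all $(2n+m)$ types of adjacencies'' quietly presupposes.
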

	
	\begin{proof}
		 As  $T$ has property $P_{2,1}$ due to Lemma \ref{lem partial2-trees},
		 each vertex of $T$ has all $(2n+m)$ types of adjacencies. 
		 Let $v$ be an $\alpha$-neighbor of $u$ in $T$. Notice that, 
		 there is at least one vertex which is $\alpha$-neighbor of $u$ and $\beta$-neighbor of $v$. As $\beta$ varies over the set of all $(2n+m)$ types of adjacencies, 
		 $u$ has at least $(2n+m)$ $\alpha$-neighbors, which are also adjacent to $v$. Thus, counting $v$, 
		 $u$ has at least $(2n+m)+1$ many $\alpha$-neighbors. 
		 
		 On the other hand, as $\alpha$ is any of the $(2n+m)$ many adjacencies and $|N^{\alpha}(u)| \geq 2n + m + 1$, we have $|N(u)| \geq (2n+m)(2n+m+1)= (2n+m)^2 + (2n+m)$. As $T$ has only $(2n+m)^2 + (2n+m) + 1$ vertices, the inequalities are tight. 
	\end{proof}
	
	\begin{lemma}\label{lem 4alpha}
		If $T$ is a minimal $(n,m)$-universal bound of $\mathcal{T}_2$ on $(2n+m)^2 + (2n+m) + 1$ vertices, then it can not have $x,y,z \in N^{\alpha}(u)$ such that $x,z$ are $\gamma$-neighbors of $y$.
	\end{lemma}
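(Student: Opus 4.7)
The plan is to derive a contradiction through a neighborhood-counting argument exploiting both of the previous two lemmas. Assume, for contradiction, that such a configuration exists: three distinct vertices $x,y,z \in N^{\alpha}(u)$ with $x,z \in N^{\gamma}(y)$.

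First I would observe that, by Lemma~\ref{lem partial2-trees}, $T$ has property $P_{2,1}$, and by Lemma~\ref{lem partial2trees2}, every vertex has exactly $p+1$ neighbors of each of the $p = 2n+m$ adjacency types, hence $p(p+1) = p^2+p$ neighbors in total. Since $|V(T)| = p^2+p+1$, this forces the underlying graph of $T$ to be complete: every pair of distinct vertices of $T$ is adjacent via exactly one of the $p$ types. In particular, since $y \in N^{\alpha}(u)$, the vertices $u$ and $y$ are adjacent, and every vertex of $N^{\alpha}(u) \setminus \{y\}$ is a $\gamma'$-neighbor of $y$ for a unique $\gamma' \in \{1, 2, \ldots, p\}$ (loops are excluded since $und(T)$ is simple).

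Next I would apply property $P_{2,1}$ to the adjacent pair $(u,y)$: for every $\gamma' \in \{1, 2, \ldots, p\}$, the set $N^{\alpha}(u) \cap N^{\gamma'}(y)$ is nonempty. These $p$ sets are pairwise disjoint (a vertex has a unique adjacency type to $y$) and all contained in $N^{\alpha}(u) \setminus \{y\}$, which has cardinality exactly $p$ by Lemma~\ref{lem partial2trees2}. A pigeonhole argument then forces
\[
|N^{\alpha}(u) \cap N^{\gamma'}(y)| = 1 \quad \text{for every } \gamma' \in \{1, 2, \ldots, p\}.
\]

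Finally, the assumed configuration gives $x, z \in N^{\alpha}(u) \cap N^{\gamma}(y)$ with $x \neq z$, so $|N^{\alpha}(u) \cap N^{\gamma}(y)| \geq 2$, directly contradicting the equality just derived. I do not anticipate a serious obstacle: the bookkeeping is entirely absorbed by the exact degree condition of Lemma~\ref{lem partial2trees2}, and the only point requiring minor care is confirming that $N^{\alpha}(u)$ minus $y$ misses no vertex by accident (handled by the ``$T$ is complete'' observation and the absence of loops).
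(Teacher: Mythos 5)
Your proposal is correct and follows essentially the same route as the paper: both arguments rest on the fact (extracted from the tightness in Lemma~\ref{lem partial2trees2} together with property $P_{2,1}$) that $N^{\alpha}(u)\cap N^{\beta}(y)$ is a singleton for every $\beta$, which the assumed configuration violates since $x,z$ would both lie in $N^{\alpha}(u)\cap N^{\gamma}(y)$. Your version merely makes the pigeonhole step explicit where the paper cites it as already established.
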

	
	\begin{proof}
	Suppose $x,y,z \in N^{\alpha}(u)$ and $x,z$ are $\gamma$-neighbors of $y$. We know from the proof of Lemma~\ref{lem partial2-trees} that 
	there is exactly one vertex in the set $N^{\alpha}(u) \cap N^{\beta}(y)$
	for all $\beta$. However, in this case, 
	$\{x,z\} \subseteq N^{\alpha}(u) \cap N^{\gamma}(y)$, a contradiction. 
	\end{proof}

Now we are ready to prove the first lower bound. 
	
	\begin{lemma}\label{lem lb}
	The $(n,m)$-graph $T$ has at least $14$ vertices if either of the following happens. 
		\begin{enumerate}[(i)]
		    \item $T$ is a minimal $(0,3)$-universal bound of $\mathcal{T}_{2}$.
		    
		    \item $T$ is a minimal $(1,1)$-universal bound of $\mathcal{T}_{2}$.
		\end{enumerate}
	\end{lemma}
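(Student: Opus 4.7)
The plan is to argue by contradiction. In both cases Theorem~\ref{th partial 2-tree known} already guarantees $|V(T)| \geq (2n+m)^2 + (2n+m) + 1 = 13$, so it suffices to rule out $|V(T)| = 13$. Assuming $|V(T)| = 13$, I would first combine Lemma~\ref{lem partial2-trees} and Lemma~\ref{lem partial2trees2} to deduce that $T$ has property $P_{2,1}$ and that $|N^{\alpha}(u)| = 4$ for every $u \in V(T)$ and every $\alpha \in \{1,2,3\}$. Since $|N^{1}(u)| + |N^{2}(u)| + |N^{3}(u)| = 12 = |V(T)| - 1$, the three neighbourhoods partition $V(T) \setminus \{u\}$; in particular $und(T) = K_{13}$.

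Next I would fix an edge-type $\alpha$: for case $(i)$, any $\alpha \in \{1,2,3\}$ works; for case $(ii)$, I take $\alpha = 3$, the unique edge-type in the $(1,1)$ setting. The key claim is that, for every $u$, the type-$\alpha$ subgraph induced on $N^{\alpha}(u)$ is a perfect matching on four vertices, hence consists of exactly two edges. To prove it I would re-inspect the proof of Lemma~\ref{lem partial2trees2}: when $|V(T)|$ attains the threshold $(2n+m)^2 + (2n+m) + 1$, every inequality there collapses to an equality, which forces $|N^{\alpha}(u) \cap N^{\beta}(y)| = 1$ for each $y \in N^{\alpha}(u)$ and each $\beta \in \{1,2,3\}$. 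Specialising to $\beta = \alpha$ shows that every $y \in N^{\alpha}(u)$ has exactly one type-$\alpha$ neighbour inside $N^{\alpha}(u)$, so this induced subgraph is $1$-regular on four vertices, i.e., a perfect matching.

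Then I would double-count monochromatic type-$\alpha$ triangles in $T$. Each matching edge $\{y, y'\}$ inside $N^{\alpha}(u)$, together with $u$, yields a triangle $\{u, y, y'\}$ whose three sides are all of type $\alpha$: the sides $uy$ and $uy'$ because $y, y' \in N^{\alpha}(u)$, and the side $yy'$ by construction of the matching. So the number of type-$\alpha$ triangles through each vertex of $T$ equals exactly $2$, and summing over $u \in V(T)$ with every triangle counted three times yields a total of $13 \cdot 2 / 3 = 26/3$ triangles --- not an integer. This parity/divisibility obstruction is the sought contradiction, and it simultaneously rules out $|V(T)| = 13$ in both cases.

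The main obstacle I foresee is justifying the perfect-matching claim in the middle step. Lemma~\ref{lem 4alpha} alone only yields a \emph{matching} inside $N^{\alpha}(u)$: it forbids a vertex of degree $\geq 2$ but permits degree $0$. Upgrading this to a \emph{perfect} matching --- so that the local triangle-count is exactly $2$, not merely at most $2$ --- forces one to exploit the exact tightness hidden in Lemma~\ref{lem partial2trees2}, namely that all three adjacency types must appear exactly once among the three non-$y$ vertices of $N^{\alpha}(u)$. This is the only delicate step of the argument; once it is in hand, the non-integrality of $26/3$ does the rest.
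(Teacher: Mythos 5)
Your proposal is correct, and the delicate step you flag --- upgrading the matching inside $N^{\alpha}(u)$ to a \emph{perfect} matching --- does go through: once $|V(T)|=13$ forces $|N^{\alpha}(u)|=2n+m+1=4$ for every $u$ and $\alpha$ (Lemma~\ref{lem partial2trees2}), the underlying graph is $K_{13}$, so for each $y\in N^{\alpha}(u)$ the three disjoint, nonempty sets $N^{\alpha}(u)\cap N^{\beta}(y)$, $\beta\in\{1,2,3\}$, partition the $3$-element set $N^{\alpha}(u)\setminus\{y\}$ and are therefore all singletons; taking $\beta=\alpha$ gives the $1$-regularity you need. Your choice of $\alpha=3$ in the $(1,1)$ case is also the right move, since the symmetry of the edge type is what makes the induced type-$\alpha$ subgraph an undirected perfect matching rather than a functional digraph. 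That said, your route diverges from the paper's at the final step. The paper also extracts a monochromatic type-$3$ triangle $uvw$ (which exists for exactly the reason your matching argument exposes), but then counts \emph{common neighbours} of that single triangle: using property $P_{2,1}$ together with Lemma~\ref{lem 4alpha} it shows that each of the twelve admissible type-triples $A_{\alpha,\beta,\gamma}$ is realised by a distinct common neighbour, forcing $12+3=15>13$ vertices. You instead count monochromatic type-$\alpha$ triangles globally: exactly two through each vertex, hence $13\cdot 2/3=26/3$ in total, which is not an integer. Both arguments rest on the same structural lemmas and only bite at exactly $13$ vertices; your divisibility obstruction is shorter and avoids the case analysis over the twelve triples, while the paper's local count is more explicit about where the extra vertices must live (and is the style of argument one would try to push further, e.g.\ towards ruling out $14$).
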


	\begin{proof}
		Assume the contrary. That is, let $T$ be a minimal $(0,3)$-universal bound 
		or a minimal $(1,1)$-universal bound of $\mathcal{T}_2$ on $13$ vertices. Note that $und(T)$ is a complete graph 
		due to Lemma~\ref{lem partial2trees2}. 
		Also using Lemma~\ref{lem partial2-trees} 
		and Lemma~\ref{lem partial2trees2}
		we know that $T$ contains a $K_3$ with vertices $u,v,w$ (say) whose all edges are of color $3$. 
		
		Next we will try to count the number of vertices in $T$. For convenience, let us denote the set $N^{\alpha}(u) \cap N^{\beta}(v) \cap N^{\gamma}(w) \setminus \{u,v,w\} = A_{\alpha, \beta, \gamma}$. Also, let us denote the set of all common neighbors of $u,v,w$ by $A$. 
		As $T$ has property $P_{2,1}$, there must exist a $x \in A$ which is a $3$-neighbor of $u$ and a $2$-neighbor of $v$. Notice that, if $x$ is a $2$-neighbor or a $3$-neighbor of $w$, then the configuration described in Lemma~\ref{lem 4alpha} is created. Thus, $x$ must be a $1$-neighbor of $w$. Hence, $|A_{3,2,1}| \geq 1.$ 
		Similarly, we can show that 
		\begin{equation}\label{eq alpha-beta-gamma}
		    |A_{1,2,3}|=|A_{1,3,2}|=|A_{2,1,3}|=|A_{2,3,1}|=|A_{3,1,2}|=|A_{3,2,1}| \geq 1.
		\end{equation}

		Until now, among the vertices we have described, none of them is a $2$-neighbor of both $u$ and $v$. 
		However as $T$ has property $P_{2,1}$, there must be a vertex $y$ of $T$ which is a $2$-neighbor of both $u$ and $v$. 
		Note that, $y$ can not be 
		an $3$-neighbor or a $2$-neighbor of $w$ due to Lemma~\ref{lem 4alpha}. Thus, $y$ must be a $1$-neighbor of $w$, to have, $|A_{2,2,1}| \geq 1.$
		Similarly, we can show that 
		\begin{equation}\label{eq alpha-alpha-beta}
		    |A_{2,2,1}|=|A_{1,1,2}|=|A_{2,1,2}|=|A_{1,2,1}|=|A_{1,2,2}|=|A_{2,1,1}| \geq 1.
		\end{equation}
		
		As the sets of the form $A_{\alpha, \beta, \gamma}$ partitions $A$, we can combine equations~(\ref{eq alpha-beta-gamma}) and~(\ref{eq alpha-alpha-beta}) to conclude that $|A| \geq 12$. This implies that $T$ has at least $15$ vertices including $u,v,w$, a contradiction. 
	\end{proof}

       \begin{lemma}\label{lem target 03}
		There exists a $(0,3)$-graph $T_{0,3}$ on $15$ vertices having property $P_{2,1}$. 	\end{lemma}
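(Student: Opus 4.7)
The plan is to produce an explicit $(0,3)$-graph $T_{0,3}$ on $15$ vertices and check directly that it has property $P_{2,1}$, since Corollary~\ref{cor partial 2-tree iff P21} then yields $\chi_{0,3}(\mathcal{T}_2)\le 15$. To keep the case analysis manageable I would make the construction as symmetric as possible; the natural candidate is a Cayley structure on the cyclic group $\mathbb{Z}_{15}$. Pick three symmetric subsets $S_1,S_2,S_3\subseteq \mathbb{Z}_{15}\setminus\{0\}$ with $-S_i=S_i$, and declare $uv$ to be an edge of type $\alpha\in\{1,2,3\}$ exactly when $u-v\in S_\alpha$. The underlying graph is then $12$-regular (for a balanced choice) and vertex-transitive.

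Under this setup, property $P_{2,1}$ reduces to a purely arithmetic condition on $\mathbb{Z}_{15}$: for every $d\in S_1\cup S_2\cup S_3$ and every $(\alpha,\beta)\in\{1,2,3\}^2$ one needs $s\in S_\alpha$ and $t\in S_\beta$ with $s-t=d$, that is,
\[
d\ \in\ S_\alpha - S_\beta \qquad\text{for every edge-type }d\text{ and every pair }(\alpha,\beta).
\]
Vertex transitivity means it suffices to verify this with the fixed endpoint $0$, so the verification is a finite check over at most $9\,|S_1\cup S_2\cup S_3|$ difference equations, organised into a $3\times 3$ table of difference sets. One also has to make sure the witness vertex is distinct from $u,v$, which amounts to excluding $s=0$ and $s=d$, an easy bookkeeping point.

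The hard part is choosing the $S_i$: since $\mathbb{Z}_{15}\setminus\{0\}$ decomposes into $7$ symmetric pairs, the three color classes cannot all have the same size, so the construction is necessarily unbalanced and a short search is needed to find a partition (possibly leaving one symmetric pair without any edge, so that $und(T_{0,3})$ is not complete) for which each of the nine difference sets $S_\alpha-S_\beta$ covers all relevant edge types. The first attempts using contiguous blocks of residues fail — for instance with $S_1=\{\pm 1,\pm 2\}$, $S_2=\{\pm 3,\pm 4\}$, $S_3=\{\pm 5,\pm 6,\pm 7\}$ the difference $S_1-S_2$ misses $\pm 7$ — so one must spread the residues more carefully across $S_1,S_2,S_3$, exploiting the subgroup lattice of $\mathbb{Z}_{15}$ (the copies of $\mathbb{Z}_3$ and $\mathbb{Z}_5$).

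Once the correct partition is exhibited, the proof closes by tabulating the nine difference sets and reading off the required witnesses, which is essentially mechanical. If the cyclic-group approach turns out to be too rigid for some edge-type, a fallback is an ad hoc construction: specify $T_{0,3}$ directly by its adjacency lists (or equivalently its three edge-color matrices) and verify property $P_{2,1}$ edge by edge. This sacrifices symmetry for flexibility but still amounts to a bounded, finite verification.
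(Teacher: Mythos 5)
Your overall strategy --- exhibit an explicit $15$-vertex $(0,3)$-graph and verify property $P_{2,1}$ by a finite, symmetry-reduced check --- is the same as the paper's in spirit. But the proposal has a fatal problem: your primary plan, a circulant (Cayley) coloring on $\mathbb{Z}_{15}$ with three symmetric connection sets $S_1,S_2,S_3$, provably cannot succeed, so the search you defer to can never terminate with a valid example. Indeed, since a $(0,3)$-graph is undirected, each $S_\alpha$ must be symmetric, say $S_\alpha=\{\pm a_1,\ldots,\pm a_{k_\alpha}\}$. Applying property $P_{2,1}$ with $\beta=\alpha$ to every edge forces $S_\alpha-S_\alpha\supseteq S_1\cup S_2\cup S_3$ for each $\alpha$. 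But the nonzero differences of a symmetric set are exactly $\{\pm 2a_i\}\cup\{\pm(a_i-a_j),\pm(a_i+a_j):i<j\}$, of cardinality at most $2k_\alpha^2$, while $|S_1\cup S_2\cup S_3|=2(k_1+k_2+k_3)$. Hence $k_\alpha^2\ge k_1+k_2+k_3$ for every $\alpha$, which gives $k_1+k_2+k_3\ge 9$; yet $\mathbb{Z}_{15}\setminus\{0\}$ contains only $7$ symmetric pairs. So the $3\times 3$ table of difference sets you intend to tabulate can never be completed, no matter how the residues are distributed.

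Beyond this obstruction, the construction is simply never produced: you acknowledge that ``a short search is needed'' and that the proof closes ``once the correct partition is exhibited,'' but exhibiting the object is the entire content of the lemma, and your fallback (an ad hoc adjacency list) is likewise not supplied. The paper's construction shows how to escape the obstruction: it takes vertex set $\mathbb{Z}/5\mathbb{Z}\times\mathbb{Z}/3\mathbb{Z}$, joins $(i,j)$ and $(i',j')$ exactly when $i\ne i'$, and colors the edge $1+j+j'$ or $2+j+j'$ (mod $3$) according to whether $i'-i$ is a nonzero square in $\mathbb{Z}/5\mathbb{Z}$. The color depends on the \emph{sum} $j+j'$ rather than the difference, so the coloring is not translation-invariant in the second coordinate and the graph is not a Cayley coloring at all --- precisely the flexibility your setup forbids. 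If you want to salvage your approach, you must abandon the circulant ansatz and give an explicit non-Cayley example of this kind.
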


	\begin{proof}
	  Let $T_{0,3}$ be an $(0,3)$-graph with set of vertices $\mathbb{Z}/5\mathbb{Z} \times \mathbb{Z}/3\mathbb{Z}$. 
	    Let $(i,j)$ and $(i',j')$ be two vertices of $T_{0,3}$. 
	    The adjacency between the vertices are as per the following rules.

	    \begin{itemize}
	   \item If $i = i'$, then $(i,j)$ and $(i',j')$ are not adjacent. 
	         
	   \item If $(i'-i)$ is a non-zero square in $\mathbb{Z}/5\mathbb{Z}$, 
	   then there is an edge of color $(1+j+j')$ (considered modulo $3$) between $(i,j)$ and $(i',j')$. 
	    
	    \item If $(i'-i)$ is not a square in $\mathbb{Z}/5\mathbb{Z}$, 
	   then there is an edge of color $(2+j+j')$ (considered modulo $3$) between $(i,j)$ and $(i',j')$. 
	   \end{itemize}

	    Notice that it is enough to show that $T_{0,3}$ has property $P_{2,1}$. Let $(i,j)$ and $(i',j')$ be any two 
	    adjacent vertices in $T_{0,3}$. 
	    Without loss of generality we may assume that either $i'=i+1$ or $i'=i+2$. We can further assume that $i=0$ and $i'=1$ or $i' = 2$, 
	    still without losing generality. Also, for convenience, let the set $A_{\alpha, \beta}$ be
	    $ N^{\alpha}((i,j)) \cap N^{\beta}((i',j'))$. Thus our objective is to show that all such subsets, 
	    which are a total of nine in number, are non-empty. 
	   
	    \begin{enumerate}[(i)]
	        \item If $i'=1$, then 
	        $(2,j'') \in A_{2+j+j'', 1+j'+j''}$, 
	        $(3,j'') \in A_{2+j+j'', 2+j'+j''}$, and 
	        $(4,j'') \in A_{1+j+j'', 2+j'+j''}$
	        where $j''$ varies over $\mathbb{Z}/3\mathbb{Z}$. Notice that, as $j''$ varies, we obtain a total of nine non-empty subsets of the type $A_{\alpha,\beta}$, and we are done by observing that these subsets have distinct ordered pairs as indices.

	        \item If $i'=2$, then 
	        $(1,j'') \in A_{1+j+j'', 1+j'+j''}$, 
	        $(3,j'') \in A_{2+j+j'', 1+j'+j''}$, and 
	        $(4,j'') \in A_{1+j+j'', 2+j'+j''}$
	        where $j''$ varies over $\mathbb{Z}/3\mathbb{Z}$. Notice that, as $j''$ varies, we obtain a total of nine non-empty subsets of the type $A_{\alpha,\beta}$, and we are done by observing that these subsets have distinct ordered pairs as indices. 
	    \end{enumerate}
	    
	    Hence $T_{0,3}$ has property $P_{2,1}$. 
	\end{proof}

The existence of a $(1,1)$-graph on 
$21$ vertices having property $P_{2,1}$ 
is remarked in the conclusion of~\cite{fabila2008lower}. 
For the sake of completeness, we include an 
explicit construction of the same.

	\begin{lemma}\label{lem target 11}
		There exists a $(1,1)$-graph $T_{1,1}$ on $21$ vertices having property $P_{2,1}$. 	
		\end{lemma}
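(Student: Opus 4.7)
The plan is to construct $T_{1,1}$ explicitly as a Cayley-type $(1,1)$-graph on the vertex set $\mathbb{Z}/7\mathbb{Z} \times \mathbb{Z}/3\mathbb{Z}$, in direct analogy with the $(0,3)$-graph on $\mathbb{Z}/5\mathbb{Z} \times \mathbb{Z}/3\mathbb{Z}$ constructed in Lemma~\ref{lem target 03}. The choice of $\mathbb{Z}/7\mathbb{Z}$ is essential because its non-zero elements split into the set $Q = \{1,2,4\}$ of quadratic residues and $N = \{3,5,6\}$ of non-residues, and crucially $N = -Q$. This algebraic property lets one assign a consistent direction to arcs: declare an arc $(i,j) \to (i',j')$ precisely when $i' - i \in Q$, so that the reversed adjacency viewed from $(i',j')$ corresponds to $i - i' \in N$, giving a well-defined tournament-like orientation on the $i$-coordinate.

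Concretely, I would define the three adjacency types based on the difference $(i'-i,\,j'-j)$: if $i = i'$ and $j \neq j'$, place an edge (type $3$); otherwise, whether $i' - i$ lies in $Q$ or $N$ fixes the orientation of the arc-or-edge between $(i,j)$ and $(i',j')$, and a simple rule on $j+j'$ (or $j-j'$) modulo $3$ decides whether that adjacency is realised as an arc (type $2$ from the source, type $1$ from the target) or as an undirected edge (type $3$). The connection sets have to be balanced so that each vertex has several neighbors of each of the three adjacency types, and so that the Cayley symmetry lets us reduce the verification of property $P_{2,1}$ to checking only three representative base adjacencies from a fixed vertex, say $(0,0)$.

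The verification then follows the template of Lemma~\ref{lem target 03}. For each of the three adjacency types between $(0,0)$ and a representative vertex $(i',j')$, and for each ordered pair $(\alpha,\beta) \in \{1,2,3\}^2$, I will exhibit an explicit common neighbor $(i'',j'')$ witnessing that $N^{\alpha}((0,0)) \cap N^{\beta}((i',j'))$ is non-empty. Letting $j''$ range over $\mathbb{Z}/3\mathbb{Z}$ while $i''$ varies over a suitable triple in $\mathbb{Z}/7\mathbb{Z}\setminus\{0,i'\}$ should sweep through all nine $(\alpha,\beta)$-pairs, exactly as the sweep over $j''$ produced nine distinct pairs of adjacency-type indices in the $(0,3)$ case.

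The main obstacle is choosing the connection sets so that all nine intersections are simultaneously non-empty for every type of base adjacency. The QR/NR symmetry of $\mathbb{Z}/7\mathbb{Z}$ automatically handles the direction-related casework, so the genuine combinatorial difficulty is to choose the interaction with the $\mathbb{Z}/3\mathbb{Z}$ factor so that the nine boxes are all filled for each of the three base-adjacency types simultaneously. Since the existence of such a graph has already been remarked without proof in the conclusion of~\cite{fabila2008lower}, the expectation is that a small amount of trial on the precise rule will yield a valid construction whose verification reduces to the finite case analysis sketched above.
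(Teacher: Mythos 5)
Your framework matches the paper's: the paper's $T_{1,1}$ also lives on $\mathbb{Z}/7\mathbb{Z} \times \mathbb{Z}/3\mathbb{Z}$, uses the quadratic residues $\{1,2,4\}$ of $\mathbb{Z}/7\mathbb{Z}$ to orient arcs, uses the $\mathbb{Z}/3\mathbb{Z}$ coordinate to decide arc versus edge, reduces the verification of $P_{2,1}$ to three representative adjacent pairs via the maps $x \mapsto (i'-i)^2(x-i)$ and $x \mapsto x+1$, and finishes with a table of explicit witnesses for the nine sets $N^{\alpha}(u)\cap N^{\beta}(v)$. However, your proposal stops exactly where the mathematical content of the lemma begins. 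You never fix the adjacency rule: you leave open whether the arc/edge decision depends on $j+j'$ or $j-j'$, you do not specify which combination of residue class and $j$-difference yields an arc in which direction versus an edge, and you explicitly defer the central difficulty to ``a small amount of trial.'' That trial is the proof. The lemma is an existence statement whose only known justification is an explicit construction plus a finite verification, so a proposal that asserts a suitable rule can be found without exhibiting one has not proved anything — especially since the statement being cited as evidence (the remark in~\cite{fabila2008lower}) is itself unproved in the literature, which is the very reason this lemma is included.

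Two further points would need attention even after a rule is fixed. First, your tentative rule differs from a workable one in a substantive way: you suggest joining $(i,j)$ and $(i,j')$ with an edge when $i=i'$, whereas the paper's construction makes same-$i$ vertices non-adjacent; whichever choice you make changes the neighborhood structure and must be carried through the verification. Second, the symmetry reduction to three base cases is not automatic from ``Cayley-type'' phrasing — you must check that the maps you use preserve both the residue structure \emph{and} the adjacency types (arc direction and arc/edge distinction), which depends on the precise rule and is exactly what the paper verifies before invoking the reduction. As it stands, the proposal is a correct description of the intended strategy but contains a genuine gap: the construction and its finite check are missing.
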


	\begin{proof}
	    Let $T_{1,1}$ be an $(1,1)$-graph with set of vertices $\mathbb{Z}/7\mathbb{Z} \times \mathbb{Z}/3\mathbb{Z}$. 
	    Let $(i,j)$ and $(i',j')$ be two vertices of $T_{1.1}$. The adjacency between the vertices are as per the following rules. 
	    \begin{itemize}
	       \item If $i = i'$, then, $(i,j)$ and $(i',j')$ are not adjacent.

	        \item If $j'=j$, and $(i'-i)$ is a  non-zero  square in $\mathbb{Z}/7\mathbb{Z}$, 
	    then there is an arc from $(i,j)$ to $(i',j')$.

	    \item If $j' = j +1~(\bmod~3)$ and $(i'- i)$ is a  non-zero  square
	    in $\mathbb{Z}/7\mathbb{Z}$, 
	     then there is an edge between $(i,j)$ and $(i',j')$.
	     
	     \item If $j' = j +1~(\bmod~3)$ and $(i'- i)$ is not a square
	     in $\mathbb{Z}/7\mathbb{Z}$, 
	     then there is an arc from $(i',j')$ to $(i,j)$.
	   \end{itemize}
	  As exactly one among $(i'-i)$ or $(i-i')$ is a  non-zero square in $\mathbb{Z}/7\mathbb{Z}$, the above indeed describes the whole $(1,1)$-graph.    
	     
	   Notice that it is enough to show that $T_{1,1}$ has property $P_{2,1}$. Let $(i,j)$ and $(i',j')$ be any two 
	   adjacent vertices in $T_{1,1}$. 
	  Also, for convenience, let the set $A_{\alpha, \beta}$ be
	    $ N^{\alpha}((i,j)) \cap N^{\beta}((i',j'))$. Thus our objective is to show that all such subsets, 
	    which are a total of nine in number, are non-empty.

	  Let $i$ and $i'$ be two distinct elements of $\mathbb{Z}/7\mathbb{Z}$ and let  
	 $\phi$ and $\varphi$ are two mappings from  $\mathbb{Z}/7\mathbb{Z}$ to itself given by $\phi(x) = (i'-i)^2(x-i)$ and $\varphi(x) = x+1$, respectively. 
    Observe that both $\phi$ and $\varphi$ are bijections.
    Also $x-y$ is a non-zero square in $\mathbb{Z}/7\mathbb{Z}$ if and only if $\phi(x)-\phi(y)$ (resp., $\varphi(x)-\varphi(y)$) is a non-zero square in $\mathbb{Z}/7\mathbb{Z}$. This can be verified by noting $$\phi(x) - \phi(y) = (i'-i)^2 (x-y) \text{ and } \varphi(x) - \varphi(y) = (x-y).$$ 
    If $(i'-i)$ is a non-zero square, then $\phi(i) = 0$ and $\phi(i') = 1$. If $(i'-i)$ is not a non-zero square, then $\varphi(\phi(i)) = 1$ and $\varphi(\phi(i')) = 0$.
	 This brings us to
	 the following cases. 
	 
	   \begin{enumerate}[(i)]
	        \item If $j'=j$, then  without loss of generality we may assume that 
	        $(i,j)=(0,0)$ and $(i',j')=(1,0)$. 
	        
	        \item If $j'=j+1$ (considered modulo $3$) and $(i'-i)$ is a  non-zero square, 
	        then without loss of generality we may assume that 
	        $(i,j)=(0,0)$ and $(i',j')=(1,1)$.

	        \item If $j'=j+1$ (considered modulo $3$) and $(i'-i)$ is not a non-zero square, 
	        then without loss of generality we may assume that 
	        $(i,j)=(1,0)$ and $(i',j')=(0,1)$. 
	    \end{enumerate}
	 
	 \medskip

	         \begin{center}
	          \begin{tabular}{ |c|c||c|c|c|c|c|c|c|c|c| }
	        \hline
	        $(i,j)$  & $(i'j')$ & $A_{1,1}$ & $A_{1,2}$ & $A_{1,3}$ & $A_{2,1}$ & $A_{2,2}$ & $A_{2,3}$ & $A_{3,1}$ & $A_{3,2}$ & $A_{3,3}$ \\
	        \hline
	        $(0,0)$  & $(1,0)$  & $(6,0)$ & $(5,0)$ & $(5,1)$ & $(4,0)$ & $(2,0)$ & $(4,2)$ & $(4,1)$ & $(5,2)$ & $(2,1)$ \\
	        \hline
	        $(0,0)$  & $(1,1)$  & $(6,1)$ & $(5,0)$ & $(6,0)$ & $(4,2)$ & $(2,0)$ & $(4,0)$ & $(4,1)$ & $(2,1)$ & $(5,2)$ \\
	        \hline
	        $(1,0)$  & $(0,1)$  & $(6,1)$ & $(4,0)$ & $(6,0)$ & $(5,2)$ & $(2,0)$ & $(5,0)$ & $(5,1)$ & $(2,1)$ & $(4,2)$ \\
	        \hline 
	     \end{tabular}
	     \end{center}

\medskip	    
	    
 The previously defined nine subsets of the form $A_{\alpha, \beta}$ are non-empty for each of the above listed cases can be observed from the above table. 	 
	  Hence $T_{1,1}$ has property $P_{2,1}$. 
	\end{proof}

\medskip

\noindent \textit{Proof of Theorem~\ref{th partial 2 tree}.}
Using Corollary~\ref{cor partial 2-tree iff P21}, the lower bound follows from  Lemma~\ref{lem lb} and the upper bounds follow from Lemmas~\ref{lem target 03} and~\ref{lem target 11}. \qed

	\section{Conclusions}\label{sec conclusions}
\noindent (1) More often than not, we observe that the bounds of the $(n,m)$-chromatic number is a function of $2n+m$. We wonder 
in which cases $2n+m$ works as an invariant, and in which cases it doesn't.

\medskip

\noindent (2)  We have explored the relation between the parameter $(n,m)$-chromatic number and some graph sparsity parameters,
namely, arboricity, acyclic chromatic number, maximum average degree, treewidth, etc. It is natural to wonder if there is a relation of $(n,m)$-chromatic number with 
twin-width~\cite{bonnet2021twin} 
and 
flip-width~\cite{torunczyk2023flip}, 
the more recently introduced sparsity parameters.

\medskip  

\noindent (3) Let $\mathcal{P}_{g}$ denote the family of planar graphs having girth at least $g$.  It is an interesting natural problem to find the exact girth $g$ for which we will have 
$\chi_{n,m}(\mathcal{P}_{g})=2(2n+m)+1$, but 
$\chi_{n,m}(\mathcal{P}_{g-1})>2(2n+m)+1$. Solving this problem may help us find a better approximation of the  Jagear's conjecture for planar graphs
(Conjecture~\ref{conj jagear planar}).

\medskip

\noindent (4) For the family $\mathcal{T}_2$ of partial $2$-trees, we suspect that the existing lower bound from Theorem~\ref{th partial 2-tree known} is not tight except when $(n,m) = (0,2)$ and $(1,0)$. We proved it formally for $(n,m) = (0,3)$ and $(1,1)$. Can that proof be generalized for all other values of $(n,m)$? 

\medskip

\noindent (5) For the family $\mathcal{T}_2$ of partial $2$-trees, the value of 
$\chi_{n,m}(\mathcal{T}_2)$ seems to be closer to the existing general lower bound than the upper (that is, the bounds from Theorem~\ref{th partial 2-tree known}). Is it possible to find a general construction of an $(n,m)$-graph having property $P_{2,1}$ which has less number of vertices than the upper bound reported in Theorem~\ref{th partial 2-tree known}?

\medskip

\noindent \textbf{Acknowledgements:} This work is partially supported by IFCAM project ``Applications of graph homomorphisms''
(MA/IFCAM/18/39), SERB-MATRICS ``Oriented chromatic and clique number of planar graphs'' (MTR/2021/000858), and
NBHM project ``Graph theoretic model of Channel Assignment Problem (CAP) in wireless network'' (NBHM/RP-8 (2020)/Fresh).

	%    Bibliography.
	%\bibliographystyle{plain}
	%\bibliography{mybibliography.bib}

	\bibliographystyle{abbrv}
	\bibliography{mybibliography}

\end{document}